\documentclass[12pt]{amsart}

\usepackage{amsfonts, amsthm, amsmath, amssymb}

\usepackage{makecell,multirow,diagbox}

\usepackage{amscd}

\usepackage[latin2]{inputenc}

\usepackage{t1enc}

\usepackage[mathscr]{eucal}

\usepackage{indentfirst}

\usepackage{graphicx}

\usepackage{graphics}

\usepackage{pict2e}

\usepackage{mathrsfs}

\usepackage{framed}

\usepackage{mathabx}
%for widebar
\usepackage{booktabs}
\usepackage{makecell}
\usepackage{enumerate}
\usepackage[pagebackref]{hyperref}
\hypersetup{colorlinks=true}
%%%%%%%%%%%%%%%%%%
\usepackage{cite}
\usepackage{color}
\usepackage{epic}
\usepackage{hyperref} %this gives clickable references, which is nice
% and is helpful in editing.
%\usepackage{amssymb, bm} %for curly >
%\usepackage{pdfsync} %temporary - helpful in editing (with TeXShop)
%\usepackage{showkeys} %temporary - helpful in editing
%%It works very well with my version of TeX (TeXShop) but
%%if it causes you any problems, you can delete it. We can delete it after finished.
\numberwithin{equation}{section}
\topmargin 0.7in
\textheight=8.5in
\textwidth=6.4in
\voffset=-.68in
\hoffset=-.68in

\def\red{\textcolor{red}}

\theoremstyle{plain}

\usepackage{tikz}

%\usetikzlibrary{intersections}
\usetikzlibrary{calc}
\usetikzlibrary{shapes.geometric}

% \tikzstyle dot=[style={circle,inner sep=1pt,fill}]

\tikzset{
  c/.style={every coordinate/.try}
}

% \tikzstyle dot=[style={circle,inner sep=1pt,fill,name=#1}]

\usetikzlibrary{arrows,shapes,positioning}
\usetikzlibrary{decorations.markings}
\tikzstyle arrowstyle=[scale=1]
\tikzstyle directed=[postaction={decorate,decoration={markings,mark=at position 0.6 with {\arrow[arrowstyle]{stealth};}}}]
\tikzstyle reverse directed=[postaction={decorate,decoration={markings,mark=at position 0.4 with {\arrowreversed[arrowstyle]{stealth};}}}]
\tikzstyle dot=[style={circle,inner sep=1pt,fill}]

\newtheorem{theorem}{Theorem}[section]

\newtheorem{lemma}[theorem]{Lemma}

\newtheorem{corollary}[theorem]{Corollary}

\newtheorem{proposition}[theorem]{Proposition}

\theoremstyle{definition}

\newtheorem{example}[theorem]{Example}

\newtheorem{conj}[theorem]{Conjecture}

\newtheorem{?}[theorem]{Problem}

%%% needed in this paper
\def\St{\mathrm{St}}
\def\st{\mathrm{st}}
\def\SS{\mathfrak{S}}

\def\asc{\mathrm{asc}}
\def\Asc{\mathrm{Asc}}
\def\Dt{\mathrm{Dt}}
\def\Des{\mathrm{Des}}
\def\des{\mathrm{des}}
\def\max{\mathrm{max}}
\def\min{\mathrm{min}}

\def\Pk{\mathrm{Pk}}
\def\Va{\mathrm{Va}}
\def\Sf{\mathrm{Sf}}
\def\Su{\mathrm{Su}}
\def\Fix{\mathrm{Fix}}
\def\Tl{\mathrm{Tl}}
\def\Tr{\mathrm{Tr}}

\def\tl{\mathrm{tl}}
\def\tr{\mathrm{tr}}
\def\pk{\mathrm{pk}}
\def\va{\mathrm{va}}
\def\sf{\mathrm{sf}}
\def\su{\mathrm{su}}

\def\I{{\bf I}}
\def\N{\mathbb{N}}

\newcommand{\tabincell}[2]{
\begin{tabular}{@{}#1@{}}#2\end{tabular}
}

\def\boxit#1{\leavevmode\hbox{\vrule\vtop{\vbox{\kern.33333pt\hrule
    \kern1pt\hbox{\kern1pt\vbox{#1}\kern1pt}}\kern1pt\hrule}\vrule}}

\usepackage{collectbox}

\makeatletter

\makeatother

\begin{document}

\title[Symmetries of ascents in inversion sequences]{Combinatorics of the symmetries of ascents in restricted  inversion sequences}

\author[J.N. Chen]{Joanna N. Chen}
\address[Joanna N. Chen]{College of Science, Tianjin University of Technology, Tianjin 300384, P.R. China}
\email{joannachen@tjut.edu.cn}

\author[Z. Lin]{Zhicong Lin}
\address[Zhicong Lin]{Research Center for Mathematics and Interdisciplinary Sciences, Shandong University, Qingdao 266237, P.R. China}
\email{linz@sdu.edu.cn}

\date{\today}
\begin{abstract}
The systematic study of inversion sequences avoiding triples of relations was initiated by Martinez and Savage. For a triple  $(\rho_1,\rho_2,\rho_3)\in\{<,>,\leq,\geq,=,\neq,-\}^3$, they introduced  $\I_n(\rho_1,\rho_2,\rho_3)$ as the set  of inversion sequences $e=e_1e_2\cdots e_n$ of length $n$ such that there are no indices $1\leq i<j<k\leq n$ with $e_i \rho_1 e_j$, $e_j \rho_2 e_k$ and $e_i \rho_3 e_k$. To solve a conjecture of Martinez and Savage, Lin constructed a bijection between $\I_n(\geq,\neq,>)$ and $\I_n(>,\neq,\geq)$ that preserves the distinct entries and further posed a symmetry conjecture of ascents on these two classes of restricted inversion sequences. Concerning Lin's symmetry conjecture, an algebraic proof  using the  kernel method was recently provided by Andrews and Chern, but a bijective proof still remains mysterious. The goal of this article is to establish bijectively both Lin's symmetry conjecture and the $\gamma$-positivity of the ascent polynomial on $\I_n(>,\neq,>)$. The latter result implies that the distribution of ascents on $\I_n(>,\neq,>)$ is symmetric  and unimodal.
\end{abstract}
\keywords{Inversion sequences; Pattern avoidance; Bijective proof; $b$-code; $\gamma$-positive}
\maketitle
%
%\tableofcontents

%%%%%%%%%%%%%%%%%%%%%%%%%%%%%%%%%%%%%
\section{Introduction}\label{sec:intro}
%%%%%%%%%%%%%%%%%%%%%%%%%%%%%%%%%%%%%

An \emph{inversion sequence} is a sequence $e=e_1 e_2 \cdots e_n$ of natural numbers such that
$0 \leq e_i \leq i-1$ for all $i \in [n]:=\{1,2,\cdots, n\}$.
Let $\I_n$ be the set of all inversion sequences of length $n$.
Denote by $\SS_n$  the set of permutations of $[n]$. There are several bijections from $\SS_n$ to $\I_n$, which can be seen as natural
codings of permutations. For example,  the map $\Theta: \SS_n \rightarrow \I_n $ defined for $\pi=\pi_1 \pi_2 \cdots \pi_n \in \SS_n$ as
\[\Theta(\pi)=e_1 e_2 \cdots e_n\quad\text{with}\quad  \,\,\,\, e_i=|\{j : j<i  \text{ and } \pi_j >\pi_i\}|,\]
 is a bijection known as the \emph{Lehmer code} of permutations. Many intriguing interactions between inversion sequences and permutations have been exploited in the literature~\cite{auli2,fjlyz,HZ,kl,kl2,ly,SV}.

Recently, inversion sequences with various restrictions  have been extensively studied from their purely enumerative aspect.
Inversion sequences avoiding a single pattern of length $3$ were first considered  by
Corteel, Martinez, Savage and Weselcouch~\cite{CMSW} and Mansour and Shattuck \cite{Mansour}, independently.
Martinez and  Savage~\cite{MS} further generalized the classical patterns of length $3$ to patterns of relation triples.
Their works have inspired lots of further investigations, including refinements by statistics in~\cite{Andrews,bbgr,cjl,kl,kl2,yan}, consecutive patterns by Auli and Elizalde~\cite{auli,auli2}, vincular patterns in~\cite{auli3,ly,fl}, and pairs of length-$3$ patterns by Yan and Lin~\cite{ylin}. The objective of this article is to prove combinatorially symmetries of the ascent statistic for several classes of  inversion sequences avoiding patterns of relation triples.

Permutations and inversion sequences are viewed as words over $\N:=\{0,1,2,\ldots\}$.
A word $w=w_1\cdots w_n$ is said to \emph{avoid} a pattern $p=p_1\cdots p_k$ (or {\em$p$-avoiding}) if none of the subsequences of $w$ are order isomorphic to $p$. Otherwise, $w$ is said to \emph{contain} a pattern $p$ (or a $p$-pattern).
For a set $\mathcal{W}$ of words, denote by  $\mathcal{W}(p_1,p_2,\ldots,p_r)$  the set of words in $\mathcal{W}$ avoiding patterns $p_1, p_2, \ldots, p_r$.
For inversion sequences, Martinez and  Savage~\cite{MS} generalized  the notation of a pattern of length $3$ to a fixed triple of binary relations $(\rho_1,\rho_2,\rho_3)$. For each relation triple $(\rho_1,\rho_2,\rho_3) \in \{<,>,\leq, \geq, =, \neq, -\}^3$, they considered the set $\I_n(\rho_1,\rho_2,\rho_3)$ consisting of those $e\in\I_n$ with no $i<j<k$ such that $e_i \rho_1 e_j$, $e_j \rho_2 e_k$ and $e_i \rho_3 e_k$. For example, $\I_n(<,>,<)=\I_n(021)$ and
$\I_n(\geq,\neq,>)=\I_n(110,201,210)$. In general, patterns of relation triples are some special multiple patterns of length $3$.

%Given a word $w=w_1 w_2 \cdots w_n$ of integers, $w_i$ is called a \emph{right to left maxima} of $w$
%if $w_i >w_j$ for all $j >i$.

For a word $w=w_1 w_2 \cdots w_n$ over $\N$, an index $i\in[n-1]$ is called an \emph{ascent} of $w$ if $w_i < w_{i+1}$, a \emph{descent} if $w_i>w_{i+1}$ and a {\em plateau} if $w_i=w_{i+1}$. The number of ascents over inversion sequences is an {\em Eulerian statistic} (see~\cite{kl}), which is one of the most important  statistics studied in the literature. Throughout the paper, we make the convention that all sets can be viewed as multisets and elements in a set are ordered nondecreasingly.  If $\St$ is a set-valued statistic, then $\st$ is defined to be the corresponding numerical statistic. If $S$ is a set, then $x^S$ stands for the monomial $\prod_{i\in S}x_i$.
 Let us introduce three useful set-valued statistics
 \begin{align*}
  & \Dt(w)  =\{w_i \colon 1 \leq i <n \text{ and } w_i >w_{i+1}\}, \\[3pt]
  %& \Pla(e)  =\{i \colon 1 \leq i <n \text{ and } e_i =e_{i+1}\}, \\[3pt]
  & \Des(w)  =\{i \colon 1 \leq i <n \text{ and } w_i >w_{i+1}\}, \\[3pt]
  & \Asc(w) =\{i \colon 1 \leq i <n \text{ and } w_i < w_{i+1}\},
 \end{align*}
%Set $\pla(e)=|\Pla(e)|$, $\des(e)=|\Des(e)|$ and $\asc(e)=|\Asc(e)|$.
called the multiset of {\em descent tops},  the set of {\em descent positions} and the set of {\em ascent positions} of $w$, respectively.
%For example, if $e=01010422532\in\I_{11}$, then $q^{\Dt(e)}=q_1^2q_3q_4q_5$, $t^{\Des(e)}=t_2t_4t_6t_9t_{10}$ and $t^{\Asc(e)}=t_1t_3t_5t_8$.

The integer sequence A098746
$$
\{1, 1, 2, 6, 23, 102, 495, 2549, 13682, 75714,428882,\ldots\}
$$
in the OEIS~\cite{oeis} defined by the algebraic generating function
$$
(t^2-t+1)A(t)^3+(t-3)A(t)^2+3A(t)-1=0
$$
has plenty of combinatorial interpretations (see the talk~\cite{bur} by Burstein at PP2020) in terms of
\begin{itemize}
\item forests of planted ternary trees;
\item permutations avoiding one of $16$ $(4,5)$-symmetry classes of pairs of patterns, one of which is the pair $(4231,42513)$ closely related to the strictly locked jump queue~\cite{alb};
\item and three classes of inversion sequences avoiding relation triples found by Martinez and Savage~\cite{SV}, namely
$$
\I_n(>,-,>),\quad \I_n(\geq,\neq,>)\quad\text{and}\quad\I_n(>,\neq,\geq).
$$
\end{itemize}
In fact, Martinez and Savage~\cite{SV} proved bijectively that
\begin{equation}\label{ms:equi}
|\I_n(>,-,>)|=|\I_n(\geq,\neq,>)|
\end{equation}
and further conjectured that
\begin{equation}\label{ms:conj}
|\SS_n(4231,42513)|=|\I_n(>,-,>)|=|\I_n(\geq,\neq,>)|=|\I_n(>,\neq,\geq)|.
\end{equation}
This conjecture was confirmed by Lin in~\cite{Lin}: the first identity was established algebraically  by decomposing  $(>,-,>)$-avoiding inversion sequences, while the third one was proved via constructing a bijection  between $\I_n(\geq,\neq,>)$ and $\I_n(>,\neq,\geq)$. Lin's bijection does preserve some natural statistics but could not prove his symmetry conjecture of ascent statistic below.

\begin{conj}[Lin's symmetry conjecture] \label{conj:Lin}
For $n \geq 1$,
\begin{equation*}
 \sum_{e \in \I_n(\geq, \neq, >)} t ^{\asc(e)}= \sum_{e \in \I_n(>, \neq, \geq)} t ^{n-1-\asc(e)}.
\end{equation*}
\end{conj}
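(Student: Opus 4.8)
The plan is to recode the inversion sequences in these classes by suitable \emph{$b$-codes} --- sequences $b=b_1b_2\cdots b_n$ of nonnegative integers subject to \emph{local} bounds $0\le b_i\le \beta_i$, where $\beta_i$ depends only on $i$ and on a bounded amount of data read off from $b_1\cdots b_{i-1}$ --- and then to carry the ascent statistic over to the $b$-codes, where both assertions become transparent. To build such a code one first extracts a structural normal form for each class. A key observation, essentially common to all three triples $(\geq,\neq,>)$, $(>,\neq,\geq)$ and $(>,\neq,>)$, is that every descent top of an admissible $e$ is forced to be a left-to-right maximum, and that immediately after a descent from a value $H$ down to a value $L$ every later entry is pinned to be either (at least) $H$ or equal to $L$; iterating this yields a ``staircase-of-plateaus'' picture of $e$ from which I would \emph{define} the $b$-code, letting $b_i$ record the choice made at step $i$ relative to the current staircase rather than the raw value $e_i$. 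In this encoding the ascent positions of $e$ become exactly the positions $i$ for which $b_i$ lies in an explicitly described interval $R_i\subseteq\{0,1,\dots,\beta_i\}$, so that $\asc(e)=\#\{i:b_i\in R_i\}$.

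For Lin's symmetry conjecture the goal is then to produce two such encodings, $\alpha\colon\I_n(\geq,\neq,>)\to\mathcal B_n$ and $\beta\colon\I_n(>,\neq,\geq)\to\mathcal B_n$, onto the \emph{same} set of $b$-codes, transporting $\asc$ to statistics $\mathsf f$ and $\mathsf g$ on $\mathcal B_n$ with $\mathsf f(b)+\mathsf g(b)=n-1$ for every $b$; then $\beta^{-1}\circ\alpha$ is the desired bijection, since $\asc((\beta^{-1}\circ\alpha)(e))=\mathsf g(\alpha(e))=n-1-\mathsf f(\alpha(e))=n-1-\asc(e)$. Since the two relation triples differ only by interchanging a ``$>$'' and a ``$\ge$'' in mirror positions, I expect the two codomains to coincide on the nose and the passage from $\mathsf f$ to $\mathsf g$ to be realized by an entrywise reflection $b_i\mapsto\beta_i-b_i$ of the $b$-code; verifying that this reflection indeed interchanges the roles of the two triples and complements each interval membership would finish the argument.

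For the $\gamma$-positivity of $\sum_{e\in\I_n(>,\neq,>)}t^{\asc(e)}$ the plan is to transfer $\asc$ to the $b$-codes of $\I_n(>,\neq,>)$ and then run a group action of Foata--Strehl / valley-hopping type on these codes: at each position whose local bound leaves room for more than one admissible value without disturbing later constraints, one flips the entry between its two extreme admissible values, trading an ascent for a descent; the orbits of this valley-hopping action are then ``combinatorial cubes'' on which $\asc$ ranges over an integer interval symmetric about its center, and collecting one monomial per orbit yields the $\gamma$-expansion with manifestly nonnegative coefficients, hence symmetry and unimodality.

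The main obstacle is the construction of the $b$-code itself, which must simultaneously make (i) pattern avoidance a genuinely local bound, (ii) the ascent statistic a clean per-position interval condition, and (iii) the bounds loose enough that a single flip --- whether the complementation reflection for the symmetry conjecture or the valley-hopping toggle for $\gamma$-positivity --- leaves admissibility of all later entries intact. Reconciling (i) and (iii) is the delicate point, because after a descent the admissible downstream values are pinned to the two levels created by that descent; the anticipated resolution is precisely to let $b_i$ encode relative choices against the running staircase, so that a flip merely re-labels a block of already-committed plateau entries and propagates no further.
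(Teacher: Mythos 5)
Your proposal is a research plan rather than a proof: every step that carries the actual weight of Lin's conjecture is exactly the step you defer. You assume there exists an encoding in which (i) avoidance of the relation triple becomes a purely local bound $0\le b_i\le\beta_i$, (ii) $\asc$ becomes a per-position interval condition $b_i\in R_i$, and (iii) the entrywise reflection $b_i\mapsto\beta_i-b_i$ maps the code set of $\I_n(\geq,\neq,>)$ onto that of $\I_n(>,\neq,\geq)$ while complementing every interval membership. None of (i)--(iii) is constructed or verified, and there are concrete reasons to be skeptical of the ``coincide on the nose'' claim. The two triples are not exchanged by any symmetry of inversion sequences (reversal is unavailable because of the staircase bounds $0\le e_i\le i-1$), the ascent polynomial of each class separately is \emph{not} palindromic, so the reflection would have to interchange two genuinely different code sets; moreover the ``pinning'' after a descent is different in the two classes ($\I_n(\geq,\neq,>)=\I_n(110,201,210)$ forbids repeating a descent top, while $\I_n(>,\neq,\geq)=\I_n(101,201,210)$ forbids returning to it after dipping below), so the local bounds $\beta_i$ and the intervals $R_i$ cannot be expected to match position by position without a nontrivial realignment --- which is precisely the missing bijection. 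Your observation that later entries are confined to the two levels created by a descent is correct for $\I_n(201,210)$, but it is the starting point of the problem, not its resolution.

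For comparison, the paper does not attempt a direct recoding: it routes the symmetry through the intermediate class $\I_n(>,-,>)$. Theorem~\ref{thm:110100} transports $\I_n(\geq,\neq,>)$ to $\I_n(>,-,>)$ preserving the full set $\Asc$ (by showing the Martinez--Savage map $\alpha$ restricts suitably and preserves ascent positions, Lemma~\ref{lem:keepasc}), and Theorem~\ref{thm:100101} produces an ascent-complementing, descent-top-preserving bijection $\gamma=\Psi\circ\Gamma$ from $\I_n(>,-,>)$ to $\I_n(>,\neq,\geq)$, where $\Psi$ is Burstein's map and $\Gamma$ is an involution defined by moving ``to-right'' and ``to-left'' entries; proving $\Gamma$ is well defined and involutive requires the commutation lemma for the moves $M_a$ and a substantial case analysis. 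That this bijective proof, as well as the Andrews--Chern kernel-method proof, is long and delicate is strong evidence that the one-step reflection you anticipate does not exist in the naive form; in any case, until you actually construct the code, prove the locality of avoidance, identify the two codomains, and check that complementation exchanges the two triples and complements $\asc$, the conjecture remains unproved by your argument. (The $b$-code of Baril--Vajnovszki that the paper does use serves a different statement, the $\gamma$-positivity on $\I_n(>,\neq,>)$, and even there it is used to transfer to permutations where the Foata--Strehl action applies, not as a local-bounds reflection.)
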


Very recently, Andrews and Chern~\cite{Andrews} confirmed the above conjecture by generating functions. They decomposed inversion sequences by considering the left-most appearance of the largest entry, translating into functional equations and solving the resulting equations using kernel method. The two kernels are polynomials with  order $4$, which makes the discussions unexpectedly complicated. They remarked that a bijective proof of Lin's symmetry conjecture still remains mysterious.

In this paper, we will prove Conjecture~\ref{conj:Lin} by bijectively proving the following two Theorems.

\begin{theorem}\label{thm:110100}
For $n \geq 1$,
\begin{equation*}
 \sum_{e \in \I_n(\geq, \neq, >)} t ^{\Asc(e)}= \sum_{e \in \I_n(>, -, >)} t^{\Asc(e)}.
\end{equation*}
\end{theorem}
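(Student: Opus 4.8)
The plan is to construct an explicit bijection
\[
\phi \colon \I_n(\geq,\neq,>) \longrightarrow \I_n(>,-,>)
\]
that preserves the set-valued statistic $\Asc$. Since both sides are counted by the same OEIS sequence A098746, I expect that a direct recursive construction, following the natural decomposition of each class, will work. The key observation is that avoiding $(\geq,\neq,>)$ means: whenever $e_i \geq e_j$ with $i<j$ and $e_i > e_k$ for some $k>j$, we must have $e_j = e_k$; and avoiding $(>,-,>)$ means: whenever $e_i > e_j$ with $i<j$, then $e_i \leq e_k$ for all $k>j$. So for $(>,-,>)$-avoiding sequences, once a strict descent $e_i > e_j$ occurs at positions $i<j$, the value $e_i$ is never strictly exceeded-from-above again --- every later entry is $\geq e_i$. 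This is a strong "staircase" type restriction on where small values can appear, and it should be matched against the milder constraint in $\I_n(\geq,\neq,>)$.

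First I would set up the recursive framework: delete the last letter $e_n$ and understand which extensions $e' = e_1\cdots e_{n-1}$ of a shorter inversion sequence in each class are legal, and how appending $e_n$ changes $\Asc$ (it adds $n-1$ to $\Asc$ iff $e_{n-1}<e_n$, and never removes an element). The subtlety is that the legal values for $e_n$ depend on the global structure of $e'$ (for $(>,-,>)$, on the minimum "descent top so far", i.e. on $\min \Dt(e')$ together with $e_{n-1}$), so a naive last-letter recursion on each side separately will not line up. Instead I would look for a statistic-preserving correspondence at the level of the full decomposition trees, perhaps encoding each inversion sequence by a suitable \emph{$b$-code} (as foreshadowed by the keywords), transferring the $\Asc$-statistic through that encoding, and showing both classes map onto the same set of codes with the same induced $\Asc$.

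Concretely, the plan is: (1) characterize $\I_n(\geq,\neq,>)$ by describing, for each $e$, the constraints linking consecutive blocks of equal values and the positions of record lows; (2) do the same for $\I_n(>,-,>)$, observing that here the sequence of running minima-below-a-descent-top forms a weakly structured scaffold; (3) define $\phi$ by reading $e\in\I_n(\geq,\neq,>)$ left to right and rewriting each entry so that descents are "normalized" into the staircase pattern required by $(>,-,>)$ while keeping every ascent position fixed --- the guiding principle being that $i\in\Asc$ iff $e_i<e_{i+1}$, so I only ever need to adjust the \emph{magnitudes} of entries at non-ascent steps, never the \emph{comparisons} at ascent steps; (4) verify $\phi$ lands in $\I_n(>,-,>)$, construct the inverse, and check $\Asc(\phi(e))=\Asc(e)$.

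The main obstacle will be step (3)–(4): showing that the magnitude adjustments needed to force the $(>,-,>)$ staircase can always be carried out \emph{within} the inversion-sequence bounds $0\le e_i\le i-1$ without ever disturbing an ascent comparison, and that the process is reversible. In particular, when a $(\geq,\neq,>)$-avoiding sequence has a long run of weak descents with repeated values (allowed there), $\phi$ must redistribute these into strict descents followed by plateaus-above in a canonical way, and I must check this redistribution is a bijection onto exactly the legal $(>,-,>)$ configurations --- this is where a clean combinatorial encoding (the $b$-code) will be essential, and getting its definition right so that $\Asc$ is transparent on both sides is the crux of the argument.
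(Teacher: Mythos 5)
There is a genuine gap: what you have written is a plan, not a proof. No map $\phi$ is ever defined --- steps (1)--(4) describe what a bijection ought to do (fix ascent comparisons, ``normalize'' descents into the $(>,-,>)$ staircase, stay within the bounds $0\le e_i\le i-1$, be invertible), but the actual rule for rewriting entries is never given, and you yourself flag that carrying out (3)--(4) is ``the crux of the argument.'' Since the statement \emph{is} the existence of an $\Asc$-preserving correspondence, a proposal that reduces it to ``define a suitable encoding and check everything works'' has not established anything. The auxiliary suggestions also point away from a workable route: a last-letter recursion does not line up (as you note, the legal extensions in $\I_n(>,-,>)$ depend on global data such as $\min\Dt$), and the $b$-code is not the natural tool here --- in the paper it enters only for the $\gamma$-positivity result on $\I_n(>,\neq,>)$, not for this theorem.

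For comparison, the paper's proof is short and concrete: it uses the Martinez--Savage bijection $\alpha$ from $\I_n(\geq,>,-)=\I_n(110,210)$ to $\I_n(>,\geq,-)=\I_n(100,210)$, defined by replacing $e_j$ with $\max\{e_1,\ldots,e_j\}$ whenever $e_j=e_k$ for some $k>j$ and leaving $e_j$ unchanged otherwise; Martinez and Savage already showed that $\alpha$ restricts to a bijection from $\I_n(\geq,\neq,>)$ onto $\I_n(>,-,>)$. The only new work is a two-case verification (Lemma 2.1 of the paper) that $j\in\Asc(e)$ if and only if $j\in\Asc(\alpha(e))$, using the $110$/$210$-avoidance of $e$ in one direction and the $100$/$210$-avoidance of $\alpha(e)$ in the other. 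If you want to salvage your approach, the fastest repair is to recognize that your intended ``magnitude adjustment at non-ascent steps'' is exactly what $\alpha$ does (it only raises entries that are later repeated, which your own structural observations about the two avoidance classes suggest), and then prove the ascent-preservation claim directly rather than deferring it to an unspecified encoding.
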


\begin{theorem}\label{thm:100101}
For $n \geq 1$,
\begin{equation*}
\sum_{e \in \I_n(>, -, >)} t^{\asc(e)} q^{\Dt(e)}= \sum_{e \in \I_n(>, \neq, \geq)} t ^{n-1-\asc(e)} q^{\Dt(e)}.
\end{equation*}
\end{theorem}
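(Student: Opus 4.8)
The plan is to construct an explicit bijection $\Phi\colon\I_n(>,-,>)\to\I_n(>,\neq,\geq)$ that transforms the ascent statistic complementarily, $\asc(\Phi(e))=n-1-\asc(e)$, while preserving the multiset of descent tops, $\Dt(\Phi(e))=\Dt(e)$. The natural strategy is to first understand the structure of both pattern classes via a suitable coding. Following the \emph{$b$-code} philosophy advertised in the abstract and keywords, I would encode each $e\in\I_n$ by recording, position by position, some normalized data (for instance, for each $i$, the rank of $e_i$ among the values $\{e_1,\dots,e_i\}$ together with a bit telling whether position $i-1$ is an ascent, descent or plateau). The point of such a code is that the avoidance conditions $(>,-,>)$ and $(>,\neq,\geq)$ should become \emph{local}, left-to-right growth conditions on the code, so that inversion sequences in each class can be built by a transfer-matrix-style insertion procedure, one letter at a time.

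Concretely, I would proceed as follows. First, characterize $\I_n(>,-,>)$: an inversion sequence avoids $(>,-,>)$ iff there is no $i<j<k$ with $e_i>e_j$ and $e_i>e_k$; equivalently, whenever $e_i>e_j$ for some $i<j$, the value $e_i$ must never again be strictly exceeded--in fact strictly bounded below by $e_i$--by any later entry. This forces a ``staircase'' shape: once a descent top $d$ appears, all subsequent entries are $\ge$ something controlled by $d$. Second, characterize $\I_n(>,\neq,\geq)$ similarly: no $i<j<k$ with $e_i>e_j$ and $e_i\ge e_k$, i.e.\ after a descent from $e_i$ to $e_j$, every later entry must equal $e_i$ or exceed it... wait, must be $<e_i$ is forbidden only when combined, so the structure is again a growth condition but ``dual''. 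Third, build both classes by the same insertion tree (append a new last letter $e_n\in\{0,1,\dots,n-1\}$ subject to the local rule), and observe that the two local rules are related by the involution ``ascent $\leftrightarrow$ descent'' at each step, which is exactly what produces the $\asc\leftrightarrow n-1-\asc$ swap. Because the new letter at step $n$ is either smaller than the current last letter (creating a descent, hence a new descent top equal to the old last letter) or not, tracking $\Dt$ through the insertion is straightforward once the codes are set up, and the bijection will preserve it.

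The main obstacle I anticipate is pinning down the precise local growth conditions so that the two classes are \emph{literally} counted by the same insertion tree up to the ascent/descent swap--the raw avoidance conditions involve the leftmost-violation structure, and the first descent top, once created, constrains \emph{all} later entries, so the state carried by the insertion is not just ``last letter'' but something like ``last letter together with the current minimum descent top'' (or a normalized version thereof). Getting a state space small enough to make the two trees visibly isomorphic, yet faithful enough to reconstruct the inversion sequence, is the crux; this is presumably where the $b$-code is engineered. Once the code is right, verifying $\asc(\Phi(e))=n-1-\asc(e)$ and $\Dt(\Phi(e))=\Dt(e)$ reduces to checking, step by step, that the insertion rules match under the swap, which is routine. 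Finally, combining Theorem~\ref{thm:110100} (which gives $\Asc$-equidistribution, a fortiori $\asc$-equidistribution, between $\I_n(\geq,\neq,>)$ and $\I_n(>,-,>)$) with Theorem~\ref{thm:100101} specialized at $q=1$ immediately yields Conjecture~\ref{conj:Lin}.
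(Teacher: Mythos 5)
Your proposal is a strategy outline rather than a proof, and the crux is missing at exactly the point you yourself flag. You never specify the encoding, the ``local growth conditions,'' or the state carried by the insertion, and it is not at all clear that such a scheme exists: the avoidance conditions are not suffix-local in any bounded sense (the admissible values for a new letter depend on value data such as the largest entry that already has a smaller entry to its right, so the ``state'' is value-dependent and grows with $n$), and the fact that Andrews--Chern needed a kernel-method argument with quartic kernels suggests no small transfer-matrix picture is available. Two further specific problems: (i) your reading of $(>,\neq,\geq)$ drops the middle relation $e_j\neq e_k$ (the forbidden configurations are $i<j<k$ with $e_i>e_j$, $e_j\neq e_k$, $e_i\geq e_k$, i.e.\ the patterns $101,201,210$), and the ``dual growth condition'' you gesture at is never stated; (ii) the claim that tracking $\Dt$ ``is straightforward'' under an ascent/descent swap is exactly where the difficulty lies: complementing $\asc$ means exchanging ascents with descents-or-plateaus position by position, which generically changes which values are descent tops, so preservation of the multiset $\Dt$ cannot be read off from a step-by-step swap without a concrete construction. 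Also note that in the paper the $b$-code is used only for the $\gamma$-positivity result, not for this theorem.

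For comparison, the paper's proof builds the bijection explicitly as $\gamma=\Psi\circ\Gamma$ on $\I_n(100,210,201)$: every position of $e$ is classified as fixed, to-right, or to-left (via peaks, valleys and the special sets $\Sf$, $\Su$); $\Gamma$ moves each to-right entry rightward (incrementing it) and each to-left entry leftward (decrementing it) with carefully chosen stopping rules, and is shown to be a well-defined involution on $\I_n(100,210,201)$ preserving $\Dt$ and exchanging $(\tr,\tl)$ while keeping $\pk-\su$; composing with Burstein's map $\Psi$ into $\I_n(101,210,201)$ and using $\asc=\tl+\va+\sf-1$ together with $\va-\pk=1$ yields $\asc(\gamma(e))=n-1-\asc(e)$ and $\Dt(\gamma(e))=\Dt(e)$. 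None of this structure (nor any substitute for it) appears in your proposal, so as it stands the theorem is not proved.
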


Meanwhile, we have the following  corollary as a by-product.

%\begin{corollary}\label{cor:100Dt}
%For $n \geq 1$,
%\begin{equation*}
%\sum_{e \in \I_n(>, -, >) } q^{\Dt(e)}= \sum_{e \in  \I_n(>, -, >)} q^{\Dt(e)}.
%\end{equation*}
%\end{corollary}

\begin{corollary}\label{cor:intersect}
For $n \geq 1$,
\begin{equation*}
\sum_{e \in \I_n(>, -, \geq)} t^{\asc(e)} q^{\Dt(e)}= \sum_{e \in  \I_n(>, -, \geq)} t ^{n-1-\asc(e)} q^{\Dt(e)}.
\end{equation*}
\end{corollary}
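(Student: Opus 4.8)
The plan is to deduce Corollary~\ref{cor:intersect} by restricting the bijection underlying Theorem~\ref{thm:100101} to a suitable subclass. The starting point is the elementary identity
$$\I_n(>,-,\geq)=\I_n(>,-,>)\cap\I_n(>,\neq,\geq).$$
Indeed, a triple $i<j<k$ forms a $(>,-,\geq)$-pattern of $e\in\I_n$ precisely when $e_i>e_j$ and $e_i\geq e_k$; if moreover $e_i>e_k$ this is a $(>,-,>)$-pattern, while if $e_i=e_k$ then $e_j<e_i=e_k$ forces $e_j\neq e_k$ and it is a $(>,\neq,\geq)$-pattern. Conversely, every $(>,-,>)$-pattern and every $(>,\neq,\geq)$-pattern is visibly a $(>,-,\geq)$-pattern. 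Hence $e$ avoids $(>,-,\geq)$ if and only if it avoids both $(>,-,>)$ and $(>,\neq,\geq)$, which is the displayed identity.

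Let $\Psi\colon\I_n(>,-,>)\to\I_n(>,\neq,\geq)$ denote the bijection produced in the proof of Theorem~\ref{thm:100101}, so that $\asc(\Psi(e))=n-1-\asc(e)$ and $\Dt(\Psi(e))=\Dt(e)$ for every $e\in\I_n(>,-,>)$. The main step is to verify that $\Psi$ restricts to a bijection of $\I_n(>,-,\geq)$ onto itself. Since $\I_n(>,-,\geq)\subseteq\I_n(>,-,>)$, the map $\Psi$ is defined on $\I_n(>,-,\geq)$ and automatically lands in $\I_n(>,\neq,\geq)$; by the identity above it therefore suffices to show that $\Psi(e)$ still avoids $(>,-,>)$ whenever $e$ avoids $(>,-,\geq)$. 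Granting this, $\Psi$ maps $\I_n(>,-,\geq)$ into itself, and being injective on a finite set it is a bijection of $\I_n(>,-,\geq)$. The verification that $\Psi$ creates no $(>,-,>)$-pattern on this subclass is where the real work lies: one has to trace through the construction of $\Psi$ and its description via $b$-codes, checking step by step that the stronger forbidden configuration survives. I expect this to be a slightly delicate but routine induction following the structure of the proof of Theorem~\ref{thm:100101}; it is quite possible that the construction is manifestly compatible with the extra relation, in which case the step is essentially immediate. Depending on how $\Psi$ is presented, it may be more convenient to argue the dual inclusion, namely that $\Psi^{-1}$ preserves $(>,\neq,\geq)$-avoidance on $\I_n(>,-,\geq)$.

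Once the restriction is established, the corollary reduces to a one-line reindexing. For $e\in\I_n(>,-,\geq)$ we have $\Psi(e)\in\I_n(>,-,\geq)$ with $\asc(\Psi(e))=n-1-\asc(e)$ and $\Dt(\Psi(e))=\Dt(e)$, hence
$$\sum_{e\in\I_n(>,-,\geq)}t^{n-1-\asc(e)}q^{\Dt(e)}=\sum_{e\in\I_n(>,-,\geq)}t^{\asc(\Psi(e))}q^{\Dt(\Psi(e))}=\sum_{f\in\I_n(>,-,\geq)}t^{\asc(f)}q^{\Dt(f)},$$
where the last equality is the substitution $f=\Psi(e)$, legitimate because $\Psi$ is a bijection of $\I_n(>,-,\geq)$. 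This is exactly the asserted symmetry, so the main obstacle is concentrated entirely in the stability claim of the previous paragraph.
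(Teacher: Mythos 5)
Your reduction is exactly the one the paper uses: write $\I_n(>,-,\geq)=\I_n(>,-,>)\cap\I_n(>,\neq,\geq)=\I_n(100,101,210,201)$, show that the bijection of Theorem~\ref{thm:100101} maps this intersection into itself, and then conclude by the substitution $f=\gamma(e)$ together with $\asc(\gamma(e))=n-1-\asc(e)$ and $\Dt(\gamma(e))=\Dt(e)$. That framing, and the finite-set injectivity argument giving a bijection of the subclass onto itself, are all fine.

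The problem is that the entire mathematical content of the corollary lies in the stability claim you explicitly leave unproved (``I expect this to be a slightly delicate but routine induction\ldots it is quite possible that the construction is manifestly compatible''). It is not automatic, and the paper's proof of the corollary consists precisely of this verification. Concretely, the paper's bijection is $\gamma=\Psi\circ\Gamma$, where $\Gamma$ relocates the to-right and to-left elements and $\Psi$ is Burstein's map turning $101$-patterns into $100$-patterns (no $b$-codes are involved here; those appear only in Section~3). The paper shows that each elementary move $M_{r_d}$ and $M_{l_d}$ preserves $101$-avoidance: for a right-move the relocated entry becomes a left-to-right maximum (Proposition~\ref{prop:gamma2}), so it cannot serve as the $0$ or the second $1$ of a $101$, and any other new $101$ would force a $201$ in $e$; for a left-move the relocated entry is equal to one of its new neighbours, so a new $101$ through it would yield a $101$ in $e$. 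Hence $\Gamma(e)$ avoids $101$, so $\Psi$ acts as the identity on it and $\gamma(e)=\Gamma(e)\in\I_n(100,101,210,201)$. Without some such argument (or your dual version, showing directly that $\gamma(e)$ avoids $100$), your write-up only restates the corollary as an unverified property of the construction, so as it stands the proof is incomplete.
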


A polynomial $h(t)=\sum_{i=0}^dh_it^i$ is said to be {\em unimodal} if the coefficients are increasing and then decreasing, i.e., there is an index $c$ such that $h_0\leq h_1\leq\cdots\leq h_c\geq h_{c+1}\geq\cdots\geq h_d$.  It is called {\em symmetric} (or {\em palindromic}) if $h_i=h_{n-i}$ for $0\leq i\leq d/2$. A nice property that implies symmetry and unimodality of a polynomial $h(t)$ is the so-called {\em$\gamma$-positivity}~\cite{Ath}, i.e., admits an expansion in the basis $\{t^k(1+t)^{d-k}\}_{0\leq k\leq d/2}$ with non-negative coefficients. Corollary~\ref{cor:intersect} implies the symmetry of  $\sum_{e \in \I_n(>, -, \geq)} t^{\asc(e)}$, a polynomial which turns out to be $\gamma$-positive from the previous works~\cite{kl,kl2} by Kim and Lin.

An index $i\in[n-2]$ is called a {\em double ascent} of an inversion sequence $e\in\I_n$ if $\{i,i+1\}\subseteq\Asc(e)$. Let
$$
\widetilde{\I_{n,k}}:=\{e\in\I_n: \asc(e)=k, \text{ $e$ has no double ascents and $e_{n-1}\geq e_{n}$}\}
$$
and let $\widetilde{\I_{n,k}}(\rho_1,\rho_2,\rho_3)=\I_n(\rho_1,\rho_2,\rho_3)\cap\widetilde{\I_{n,k}}$ for any relation triple $(\rho_1,\rho_2,\rho_3)$.
\begin{proposition}[\text{Kim and Lin~\cite[Thms~3.6 and~3.7]{kl2} and~\cite[Eq.~(4.10)]{kl}}]
\label{pro:kl}
For $n\geq1$,
\begin{equation}\label{gam:kl}
\sum_{e \in \I_n(>, -, \geq)} t^{\asc(e)}=\sum_{k=0}^{\lfloor(n-1)/2\rfloor}|\widetilde{\I_{n,k}}(>, -, \geq)|t^k(1+t)^{n-1-2k}.
\end{equation}
\end{proposition}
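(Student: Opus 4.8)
The plan is to derive the $\gamma$-expansion \eqref{gam:kl} from a \emph{valley-hopping} (Modified Foata--Strehl) group action on $\I_n(>,-,\geq)$ taken with respect to the ascent statistic. The shape of the right-hand side is the guiding hint: the claimed $\gamma$-coefficient indexed by $k$ counts the inversion sequences in $\I_n(>,-,\geq)$ with exactly $k$ ascents, \emph{no double ascent}, and $e_{n-1}\geq e_n$; and ``no double ascent, and no ascent at the last position'' is precisely the signature of a complete set of orbit representatives for a Foata--Strehl action read through the ascent statistic. So I would look for a group $\Gamma$ generated by $n-1$ commuting involutions $\varphi_1,\dots,\varphi_{n-1}$ acting on $\I_n(>,-,\geq)$ --- where $\varphi_i$ toggles, by a local move, whether the index $i$ is an ascent of $e$ while leaving $\Asc(e)$ unchanged away from a short window around $i$ --- such that on each orbit $\mathcal O$ one has $\sum_{e\in\mathcal O}t^{\asc(e)}=t^{k}(1+t)^{n-1-2k}$, where $k=\asc(g)$ for the unique $g\in\mathcal O$ with no double ascent and $g_{n-1}\geq g_n$. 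Summing over all orbits and grouping them by the value $k$ of their representative then reproduces \eqref{gam:kl} verbatim, since the representatives with $\asc=k$ are by definition the elements of $\widetilde{\I_{n,k}}(>,-,\geq)$.

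I would carry this out in the following order. First, fix the local move defining $\varphi_i$: replace the maximal weakly monotone run of $e$ through position $i$ by a suitably reflected run that flips the ascent/descent status at $i$ while fixing the entries outside a small window; the subtle point is how to incorporate plateaus (equal consecutive entries) so that $\varphi_i$ is a genuine involution and so that the indices it leaves inert --- the ``peaks'' $e_{i-1}<e_i>e_{i+1}$ and ``valleys'' $e_{i-1}>e_i<e_{i+1}$ --- are the right ones. Second, prove closure: that $\varphi_i(e)$ still obeys $0\leq e_j\leq j-1$ for every $j$ and still avoids the triple $(>,-,\geq)$, i.e. still contains no $i'<j'<k'$ with $e_{i'}>e_{j'}$ and $e_{i'}\geq e_{k'}$. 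Third, check that the $\varphi_i$ pairwise commute, so that $\Gamma$ acts with orbits of the advertised form and with stabilizer of $e$ generated by the inert $\varphi_i$'s. Fourth, show that each orbit contains exactly one element $g$ with no double ascent and $g_{n-1}\geq g_n$, that $g$ minimizes $\asc$ on its orbit, and that the orbit generating function equals $t^{\asc(g)}(1+t)^{n-1-2\asc(g)}$ (so in particular $\asc(g)\leq(n-1)/2$). Fifth, sum over orbits, sorted by $k=\asc(g)$, to obtain \eqref{gam:kl}.

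The principal obstacle is the closure step. The move defining $\varphi_i$ is local, but membership in $\I_n(>,-,\geq)$ is governed by two global conditions --- the bound $e_j\leq j-1$ and the prohibition of the pattern $(>,-,\geq)$ --- so a single flip could, a priori, push an entry above its bound or create a forbidden triple spanning indices far from $i$. Controlling this seems to require first establishing a structural normal form for $\I_n(>,-,\geq)$ (roughly: a weakly increasing initial run, and then, after each strict descent, a block all of whose entries exceed the running maximum, this structure repeating recursively), and then verifying that every $\varphi_i$ preserves that normal form; getting the local move to respect such a rigid shape is where the real work lies. A secondary but genuine difficulty is the treatment of plateaus: it is exactly the presence of equal consecutive entries that prevents the naive recipe ``toggle each ascent independently'' from being an involution, and one must define the inert indices and the reflected runs with care. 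As a fallback I would transfer $\I_n(>,-,\geq)$ bijectively onto a Schröder-type lattice-path model under which $\asc$ becomes a standard statistic with classically known $\gamma$-positivity, or look for a context-free grammar whose associated polynomial is $\sum_{e\in\I_n(>,-,\geq)}t^{\asc(e)}$ and which visibly expands in the basis $\{t^k(1+t)^{n-1-2k}\}$; but the valley-hopping route is the one that delivers directly the combinatorial set $\widetilde{\I_{n,k}}(>,-,\geq)$ on the right-hand side of \eqref{gam:kl}.
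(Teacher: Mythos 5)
Your plan is a strategy, not a proof: the two constructions it hinges on are never produced. You would need to (i) define the involutions $\varphi_i$ on $\I_n(>,-,\geq)$ explicitly, (ii) prove closure (both the bound $e_j\leq j-1$ and avoidance of $(>,-,\geq)$ are preserved), (iii) prove commutativity and the orbit analysis, and you explicitly defer exactly these points (``the principal obstacle is the closure step\dots this is where the real work lies''). The obstacle you flag is real: any move that toggles the ascent status at a position of an inversion sequence must change entry \emph{values}, and since the admissible values are position-dependent ($0\leq e_j\leq j-1$) there is no evident local ``reflection of a monotone run'' that stays inside $\I_n$, let alone inside the avoidance class; no such intrinsic valley-hopping action on $\I_n(>,-,\geq)$ is constructed in your write-up, and none is known to be easy. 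So as it stands there is a genuine gap: the statement is reduced to a harder, unproved construction.

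The paper goes around this entirely, and the missing idea is the transfer step. By Martinez--Savage (Proposition~\ref{pro:MS40}), the Lehmer code $\Theta$ restricts to a bijection $\SS_n(2134,2143)\to\I_n(>,-,\geq)$ with $\Des(\pi)=\Asc(\Theta(\pi))$ as sets, so double descents and the condition $\pi_{n-1}<\pi_n$ translate exactly into double ascents and $e_{n-1}\geq e_n$. On the permutation side the Modified Foata--Strehl action is already available and Lemma~\ref{act:gam} gives the $\gamma$-expansion for any MFS-invariant class; invariance of $\SS_n(2134,2143)$ is a two-line check (Lemma~\ref{inv:2134}), because the action never moves a letter across the ``2'' of an occurrence of $2134$ or $2143$. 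In effect, the action you are trying to build directly on inversion sequences is obtained in the paper by pulling back the MFS action along $\Theta$, and that pullback is not local on $e$ --- which is precisely why attacking it intrinsically stalls. If you want to salvage your route, replace the first step of your plan by this coding step (or by your own fallback of transferring to a permutation or path model with a known action); the rest of your orbit bookkeeping is then the standard argument and matches the right-hand side of~\eqref{gam:kl}.
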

As an application of the Lehmer code and the {\em Foata--Strehl actions}~\cite{fsh} on permutations, we give a neat proof of~\eqref{gam:kl}. A similar approach, with the Lehmer code replaced by the $b$-code due to Baril and Vajnovszki~\cite{Baril},  is available to prove the following $\gamma$-positivity of $\sum_{e \in \I_n(>,\neq,>)} t^{\asc(e)}$.

\begin{theorem}
\label{thm:gamcl}
For $n\geq1$,
\begin{equation}\label{gam:cl}
\sum_{e \in \I_n(>,\neq,>)} t^{\asc(e)}=\sum_{k=0}^{\lfloor(n-1)/2\rfloor}|\widetilde{\I_{n,k}}(>,\neq,>)|t^k(1+t)^{n-1-2k}.
\end{equation}
In particular,
$$
\sum_{e \in \I_n(>,\neq,>)} t^{\asc(e)}=\sum_{e \in \I_n(>,\neq,>)} t^{n-1-\asc(e)}.
$$
\end{theorem}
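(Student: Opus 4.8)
The strategy is to transport the ascent statistic to $\SS_n$ through the $b$-code $\beta\colon\SS_n\to\I_n$ of Baril and Vajnovszki~\cite{Baril} and then to run the modified Foata--Strehl action, following the same blueprint as the proof of Proposition~\ref{pro:kl} but with the Lehmer code replaced by $\beta$. First I would record two bookkeeping facts about $\beta$: (i) it carries the ascent set to the descent set, $\Asc(\beta(\pi))=\Des(\pi)$ for every $\pi\in\SS_n$, just as the Lehmer code does; and (ii) it converts the avoidance of $(>,\neq,>)$, that is, of the pair of patterns $\{201,210\}$, into an ordinary pattern-avoidance condition on one-line notation --- here one must identify the class $\mathcal C_n:=\beta^{-1}\bigl(\I_n(>,\neq,>)\bigr)\subseteq\SS_n$ explicitly, together with a list of forbidden (classical or vincular) subwords describing it. Granting (i) and (ii),
\[
\sum_{e\in\I_n(>,\neq,>)}t^{\asc(e)}=\sum_{\pi\in\mathcal C_n}t^{\des(\pi)},
\]
and, because two consecutive ascents of $\beta(\pi)$ are two consecutive descents of $\pi$ and the condition $e_{n-1}\ge e_n$ on $e=\beta(\pi)$ says $\pi_{n-1}<\pi_n$, the code $\beta$ restricts to a bijection from $\widetilde{\I_{n,k}}(>,\neq,>)$ onto $\{\pi\in\mathcal C_n:\des(\pi)=k,\ \pi\ \text{has no double descent},\ \pi_{n-1}<\pi_n\}$.

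Next I would invoke the valley-hopping form of the Foata--Strehl action~\cite{fsh}: the group $\mathbb Z_2^{[n]}$ acts on $\SS_n$ through commuting involutions $\varphi_x$ ($x\in[n]$), where $\varphi_x$ relocates the value $x$ by hopping it over the maximal adjacent block of larger entries, thereby trading a double-ascent occurrence of $x$ for a double-descent occurrence and fixing $x$ when it sits at a peak or a valley. On each orbit $\mathcal O$ the descent polynomial collapses, $\sum_{\pi\in\mathcal O}t^{\des(\pi)}=t^{\des(\hat\pi)}(1+t)^{\,n-1-2\des(\hat\pi)}$, where $\hat\pi$ is the unique element of $\mathcal O$ with no double descent and $\hat\pi_{n-1}<\hat\pi_n$. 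The lemma at the heart of the proof is that $\mathcal C_n$ is a union of $\varphi$-orbits, i.e.\ stable under every $\varphi_x$. Once this is established, summing the orbit contributions over $\mathcal C_n$ and using the first paragraph to identify the representatives gives
\[
\sum_{\pi\in\mathcal C_n}t^{\des(\pi)}=\sum_{k=0}^{\lfloor(n-1)/2\rfloor}\bigl|\widetilde{\I_{n,k}}(>,\neq,>)\bigr|\,t^k(1+t)^{\,n-1-2k},
\]
which is \eqref{gam:cl}. The ``in particular'' statement is then immediate: each summand $s(t)=t^k(1+t)^{n-1-2k}$ obeys $t^{\,n-1}s(1/t)=s(t)$, hence so does the whole polynomial in \eqref{gam:cl}, which is exactly $\sum_{e\in\I_n(>,\neq,>)}t^{\asc(e)}=\sum_{e\in\I_n(>,\neq,>)}t^{\,n-1-\asc(e)}$.

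The real obstacle is the stability of $\mathcal C_n$ under valley-hopping: a single $\varphi_x$ deletes $x$ from one slot and reinserts it in another, and one must rule out that this ever creates a fresh occurrence of $201$ or $210$ after passing back through $\beta^{-1}$. My plan is to first reformulate membership in $\mathcal C_n$ as a \emph{local} statement about the one-line notation of $\pi$, phrased only through the relative order of entries inside a bounded window, so that it is manifestly insensitive to hops of values lying outside the window; and then to check, by a short case analysis on the type of $x$ (peak, valley, double ascent, double descent) and on the comparisons between $x$ and the entries bounding the block it hops across, that no forbidden window is ever created or destroyed. Pinning down the right intrinsic description of $\mathcal C_n$ is the crux; the treatment of $\I_n(>,-,\ge)$ under the Lehmer code in Proposition~\ref{pro:kl} is the model to imitate, and one expects that the $b$-code was chosen precisely because it renders $\mathcal C_n$ of this locally checkable, hop-stable form --- something the Lehmer-code preimage of $(>,\neq,>)$ apparently is not.
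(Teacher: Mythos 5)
Your blueprint is exactly the one the paper follows (transport $\asc$ to $\des$ via the $b$-code, then apply the MFS/valley-hopping orbit lemma, i.e.\ Lemma~\ref{act:gam}), and your bookkeeping is sound: $\Asc(b(\pi))=\Des(\pi)$, double ascents of $e$ correspond to double descents of $\pi$, and $e_{n-1}\geq e_n$ corresponds to $\pi_{n-1}<\pi_n$, so the reduction to ``find $\mathcal C_n=b^{-1}\bigl(\I_n(>,\neq,>)\bigr)$ and show it is MFS-stable'' is correct. But that is precisely where you stop: you explicitly defer both the identification of $\mathcal C_n$ and the stability proof to a ``plan,'' and those two steps are the entire mathematical content of the theorem beyond standard machinery. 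The paper proves that the $b$-code restricts to a bijection between $\SS_n(24135,24153,42135,42153)$ and $\I_n(>,\neq,>)$, yielding the set-valued identity \eqref{24135}; the proof is not a formal transfer but uses the slice structure of the $b$-code (Lemma~\ref{lem:dec}: in each slice the intervals decrease while their labels strictly increase, and all labels except the last must reappear as later entries of $b(\pi)$) to translate an occurrence of $201$ or $210$ in $e$ into three stacked intervals in a slice and hence a $5$-letter pattern in $\pi$, and conversely. Without this (or an equivalent) argument your identity $\sum_{e\in\I_n(>,\neq,>)}t^{\asc(e)}=\sum_{\pi\in\mathcal C_n}t^{\des(\pi)}$ has no usable description of $\mathcal C_n$ to feed into the invariance step, so the proof is genuinely incomplete.

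A second, smaller point: your anticipated difficulty --- that one must recast membership in $\mathcal C_n$ as a ``local, bounded-window'' condition to make it hop-stable --- is not how the issue resolves. The class $\mathcal C_n$ is a classical (global) pattern class, avoiding $24135$, $24153$, $42135$, $42153$, and its MFS-invariance (Lemma~\ref{inv:24135}) is a short argument: in each forbidden pattern the letter playing the role of $1$ is flanked by two larger letters on each side, and a hop $\varphi'_a$ moves only the value $a$ across adjacent larger entries, so an occurrence survives (possibly after renaming which larger letters witness it); since $\varphi'_a$ is an involution, the complement is stable too. So the right characterization is not ``locally checkable'' but rather one whose forbidden configurations place the hopped-over value in a position valley-hopping cannot rescue. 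In short: right road map, same as the paper's, but the two lemmas that make it work --- the $b$-code characterization $\mathcal C_n=\SS_n(24135,24153,42135,42153)$ with its slice-based proof, and the MFS-invariance of that class --- are missing rather than merely sketched.
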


It is convenient to point out the relationships among all the classes of restricted inversion sequences appearing in this paper. Setting $\mathcal{A}_n=\I_n(\geq,\neq,>)$, $\mathcal{B}_n=\I_n(>,\neq,\geq)$ and $\mathcal{C}_n=\I_n(>,-,>)$, then
\begin{align*}
&\mathcal{A}_n=\I_n(201,210,110), \quad\mathcal{B}_n=\I_n(201,210,101),\quad \mathcal{C}_n=\I_n(201,210,100), \\
&\mathcal{A}_n,\mathcal{B}_n,\mathcal{C}_n\subseteq \I_n(>,\neq,>)=\I_n(201,210),\\
&\mathcal{A}_n\cap\mathcal{B}_n=\I_n(201,210,110,101)=\I_n(\geq,\neq,\geq),\\
&\mathcal{B}_n\cap\mathcal{C}_n=\I_n(201,210,100,101)=\I_n(>,-,\geq),\\
&\mathcal{C}_n\cap\mathcal{A}_n=\I_n(201,210,110,100)=\I_n(\geq,-,>),\\
&\mathcal{A}_n\cap\mathcal{B}_n\cap\mathcal{C}_n=\I_n(201,210,110,101,100).
\end{align*}
Note that the three classes  $\I_n(\geq,\neq,\geq)$, $\I_n(>,-,\geq)$ and $\I_n(\geq,-,>)$ were known~\cite{kl2,MS} to be enumerated by the {\em large Schr\"oder numbers} (see~\cite[A006318]{oeis}). The ascent polynomial on $\I_n(\geq,\neq,\geq)$ was known to be $\gamma$-positive~\cite{kl,kl2}, while the ascent polynomial on $\I_n(\geq,-,>)$ is not symmetric. The class $\I_n(201,210,110,101,100)$ turns out (proved in Proposition~\ref{pro:fine}) to be counted by the {\em binomial transformation of the Fine's sequence} (see~\cite[A033321]{oeis}) and the ascent polynomial over this class is not symmetric.

The rest of this paper is organized as follows. In Section~\ref{sec:bijection}, we give the bijective proofs of Theorems~\ref{thm:110100} and~\ref{thm:100101}. As applications of the Lehmer code, $b$-code and the Foata--Strehl action for permutations, we prove combinatorially Proposition~\ref{pro:kl} and Theorem~\ref{thm:gamcl} in Section~\ref{gam:posi}.

\section{A bijective proof of Lin's symmetry conjecture}\label{sec:bijection}

In this section, we will give  bijective proofs of Theorem \ref{thm:110100} and Theorem \ref{thm:100101}, which in turn
settle bijectively  Conjecture  \ref{conj:Lin}.

\subsection{Bijective proof of Theorem \ref{thm:110100}}
To prove Theorem \ref{thm:110100}, we recall a bijection $\alpha$ from  $\I_n(\geq, >, -)$  to $\I_n(>, \geq, -)$ and its inverse $\beta$
 from  $\I_n(>, \geq,  -)$  to $\I_n(\geq, >, -)$ given by Martinez and  Savage \cite{MS}.
Notice that $\I_n(\geq, >, -)=\I_n(110,210)$ and $\I_n(>, \geq, -)=\I_n(100,210)$. For $e \in \I_n(110,210)$, let $\alpha(e)=t$ with
\begin{equation*}
t_j= \left\{
  \begin{array}{ll}
\max\{e_1,  \ldots,e_j\},  &\mbox{\text{if  $e_j=e_k$ for some $k>j$;}} \\[6pt]
e_j, & \mbox{\text{otherwise.}}
 \end{array} \right.
\end{equation*}
For $t \in \I_n(100,210)$,
let $\beta(t)=e$ with
\begin{equation*}
e_j= \left\{
  \begin{array}{ll}
\min\{t_j,  \ldots,t_n\},  &\mbox{\text{if  $t_i=t_j$ for some $i<j$;}} \\[6pt]
t_j, & \mbox{\text{otherwise.}}
 \end{array} \right.
\end{equation*}

\begin{lemma}\label{lem:keepasc}
For $n \geq 1$ and $e\in \I_n(110,210)$, we have $\Asc(e)=\Asc(\alpha(e))$.
\end{lemma}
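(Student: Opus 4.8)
The plan is to analyze how the map $\alpha$ acts entry-by-entry and track precisely which entries are modified. Write $e = e_1\cdots e_n \in \I_n(110,210)$ and $t = \alpha(e)$. By definition $t_j = \max\{e_1,\ldots,e_j\}$ exactly when $e_j$ is ``repeated later'' (i.e.\ $e_j = e_k$ for some $k > j$), and $t_j = e_j$ otherwise; in particular $t_j \geq e_j$ always, and the last entry is never modified since $e_n$ cannot equal a later entry. The first observation I would record is that if $t_j > e_j$ (the entry is genuinely changed), then $e_j$ equals some later entry $e_k$, and by $110$-avoidance applied to the triple of positions (a position achieving the running maximum before $j$, then $j$, then $k$) one deduces that $e_j$ is actually already the running maximum, forcing $t_j = e_j$ --- a contradiction. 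Hence \emph{$\alpha$ changes no entry at all on $110$-avoiding, $210$-avoiding sequences where the relevant maximum is strictly larger}; more carefully, one shows $t_j = e_j$ unless $e_j$ is strictly less than $\max\{e_1,\ldots,e_{j-1}\}$, and in that case $e_j$ repeated later together with the earlier larger entry and a suitably chosen witness would create a forbidden $110$ or $210$ pattern. The upshot is a clean structural description: $t_j \ne e_j$ can only happen in very constrained local configurations.

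Granting that description, the core of the argument is to compare $\Asc(e)$ and $\Asc(t)$ index by index. For each $i \in [n-1]$ I would compare the pair $(e_i, e_{i+1})$ with $(t_i, t_{i+1})$, using $t_i \geq e_i$ and $t_{i+1} \geq e_{i+1}$ together with the constraints from the previous paragraph. There are a few cases according to whether $i$ and $i+1$ are ``modified'' positions. The key point to verify is that $e_i < e_{i+1} \iff t_i < t_{i+1}$: when neither entry is modified this is trivial; when $e_i$ is modified, it becomes the running max, which is $\geq e_{i+1}$ by... actually one must be careful here, so the real work is checking that raising $e_i$ to the running maximum cannot turn a non-ascent into an ascent (it can only decrease $t_{i+1}-t_i$) and, conversely, cannot destroy an ascent either, because if $e_i < e_{i+1}$ then $e_i$ was not the running max but $e_{i+1}$ would have to interact with the pattern constraints. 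Similarly, if $e_{i+1}$ is modified upward to the running max (which includes $e_i$), then $t_{i+1} = \max\{e_1,\ldots,e_{i+1}\} \geq e_i$, so $t_{i+1} \geq t_i$, and strictness is preserved by a short case check on whether $t_i = t_{i+1}$ is possible.

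I expect the main obstacle to be the case where \emph{both} $e_i$ and $e_{i+1}$ get modified, or where $e_{i+1}$ is modified and $e_i$ is not: here one must rule out ``spurious'' new ascents created by lifting $e_{i+1}$ to $\max\{e_1,\ldots,e_{i+1}\}$ while $e_i$ stays put, i.e.\ one needs $e_i < e_{i+1}$ to already hold, which requires extracting that $e_i$ is \emph{not} the running maximum at position $i$ --- and this is where the $110$/$210$-avoidance of $e$ must be invoked most delicately, likely by exhibiting an explicit forbidden triple among the indices of the running-maximum witness, $i$, $i+1$, and the later index $k$ with $e_k = e_{i+1}$. Once all the cases are dispatched, $\Asc(e) = \Asc(t) = \Asc(\alpha(e))$ follows immediately. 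As a sanity check, the same bookkeeping should show $\Asc(\beta(t)) = \Asc(t)$ for the inverse map, since $\beta$ lowers entries symmetrically by the running minimum from the right.
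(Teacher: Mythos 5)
There is a genuine gap, and it sits at the foundation of your argument. Your first paragraph claims that if $t_j>e_j$ then the running-maximum witness, $j$, and the later index $k$ with $e_j=e_k$ would force a forbidden $110$ or $210$ pattern, so that ``$\alpha$ changes no entry at all.'' This is false: the configuration produced is $e_i>e_j=e_k$, which is a $100$-pattern, and $100$ is \emph{not} forbidden in $\I_n(110,210)$. For instance $e=0100\in\I_4(110,210)$ has $e_3=0<1=\max\{e_1,e_2,e_3\}$ with $e_3=e_4$, and $\alpha(e)=0110\neq e$. (If your structural claim were true, $\alpha$ would be the identity, which is impossible since it is a bijection onto the different set $\I_n(100,210)$.) Everything after ``granting that description'' is therefore built on a false premise, and the subsequent case analysis is in any case left incomplete at exactly the points where the lemma's content lies: you note yourself that ``one must be careful here'' when an ascent could be destroyed by lifting $e_i$, and you only ``expect'' how to rule out spurious ascents created by lifting $e_{i+1}$.

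For comparison, the paper's proof establishes two sharp claims. Forward: if $j\in\Asc(e)$, then $t_j<e_{j+1}\leq t_{j+1}$; the key point is that when $e_j=e_k$ for some $k>j$, any earlier $e_i\geq e_{j+1}$ would make $e_ie_{j+1}e_k$ a $110$- or $210$-pattern, so the running maximum through $j$ is $<e_{j+1}$. Backward: if $j\in\Asc(t)$, then $e_{j+1}>t_j\geq e_j$; here one works with the inverse map $\beta$ (so $e_{j+1}$ is a suffix minimum of $t$ when $t_{j+1}$ has an earlier equal entry) and uses that $t$ avoids $100$ and $210$ to show every later entry exceeds $t_j$. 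This second direction, which controls exactly the ``spurious new ascent'' scenario you worry about, is entirely missing from your proposal; your suggested forbidden triple (max-witness, $i+1$, $k$ with $e_k=e_{i+1}$) again only yields a $100$-pattern of $e$ and so gives no contradiction. To repair your approach you would need, at minimum, to drop the no-change claim, prove the forward inequality $t_j<e_{j+1}$ as above, and give a genuine argument (either via $\beta$ and the $100/210$-avoidance of $t$, or a careful direct case analysis on whether $e_{j+1}$ is repeated later) that $t_j<t_{j+1}$ forces $e_j<e_{j+1}$.
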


\begin{proof}
Assume that $e\in\I_n(110,210)$ and $t=\alpha(e) \in \I_n(100,210)$.
It suffices  to show that
$j \in \Asc(e)$ if and only if $j \in \Asc(t)$.

If $j \in \Asc(e)$,  we claim that $t_j<e_{j+1}$. We consider two cases.
If there is no $k>j$ such that $e_j=e_k$, then clearly  we have $t_j=e_j<e_{j+1}$. If $e_j=e_k$ for some $k>j$,
 then we have $e_i<e_{j+1}$ for $i<j$. Otherwise, $e_i e_{j+1} e_k$ is an instance of patterns $210$ or $110$ . Hence, $t_j=
\max\{e_1, \ldots, e_j\}<e_{j+1}$. The claim is verified.
 Notice that $e_{j+1} \leq t_{j+1} $. It follows that
 $t_j<t_{j+1}$, namely, $j \in \Asc(t)$.

 If $j \in \Asc(t)$,  we claim that $e_{j+1} > t_{j}$. There are  two cases to consider.
If there is no $k<j+1$ such that $t_k=t_{j+1}$, then  we have $e_{j+1}=t_{j+1}>t_j$. If $t_k=t_{j+1}$ for some $k<j+1$,
 then we deduce that $t_i>t_{j}$ for $i>j+1$. Otherwise, $t_k t_{j} t_i$ forms pattern $210$ or $100$. Hence, $e_{j+1}=
\min\{t_{j+1}, \ldots, t_n\}>t_{j}$. The claim is verified.
 Since $e_{j } \leq t_{j} $, we have
 $e_{j+1}> e_j$, namely, $j \in \Asc(e)$. The proof is now completed.
\end{proof}

It was shown by Martinez and  Savage \cite{MS} that
\begin{align*}
  \I_n(\geq, \neq, >) &=\I_n(110,210,201)  \subseteq \I_n(110,210)  =\I_n(\geq ,>,-) \\[3pt]
  \I_n(>, -, >) &=\I_n(100,210,201)  \subseteq \I_n(100,210)  =\I_n(>, \geq ,-)
\end{align*}
and $\alpha$ can be restricted to a bijection from
$\I_n(\geq, \neq, >)$ to $\I_n(>,-,>)$. Hence, Theorem \ref{thm:110100} follows directly from Lemma \ref{lem:keepasc}.

\subsection{ Bijective proof of Theorem~\ref{thm:100101}}
In this section, we  construct  a bijection $$\gamma:\I_n(>,-,>)=\I_n(100,210,201)\rightarrow\I_n(>,\neq, \geq)=\I_n(101,210,201)$$
satisfying
$$
\asc(e)=n-1-\asc(\gamma(e))\,\,\text{ and }\,\,\Dt(e)=\Dt(\gamma(e))\quad\text{for each $e\in\I_n(>,-,>)$},
$$
 which implies Theorem~\ref{thm:100101}.
 Firstly, we recall a bijection $\Psi$ from $\I_n(100,210,201)$ to $\I_n(101,210,201)$ and its inverse $\Psi^{-1}$ given by Burstein~\cite{bur}.

Given $e\in \I_n(100,210,201)$, $\Psi(e)$ can be obtained as follows. For each $i=1,2,\ldots, n$, in that order,
\begin{itemize}
  \item check if $e_i$ is the second $1$ in an instance of pattern $101$ in $e$,
  \item if it is, change $e_i$ so as to turn this instance of pattern $101$ into an instance
of pattern $100$ on the same letters. Otherwise, leave $e_i$ unchanged.
\end{itemize}
For example, if $e=01021332343\in\I_{11}(100,210,201)$, then
$$e\rightarrow0102\red{0}332343\rightarrow0102033\red{0}343\rightarrow01020330\red{0}43\rightarrow0102033004\red{0}=\Psi(e).$$
Conversely, for $e\in \I_n(101,210,201)$, $\Psi^{-1}(e)$ can be obtained as follows. For each $i=n,\ldots, 2,1$, in that order,
\begin{itemize}
  \item check if $e_i$ is the second 0 in an instance of pattern $100$ in e,
  \item if it is, find the maximal $1$ among such instances of $100$ and change $e_i$ so
      as to turn that instance of pattern $100$ into an instance of pattern $101$ on the same letters. Otherwise, leave $e_i$ unchanged.
\end{itemize}

For $e\in \I_n$, let $e_0=e_{n+1}=+\infty$ throughout this section. Define the set of {\em peaks} and  {\em valleys} of $e$ by
\begin{align*}
  \Pk(e) & =\{i\in[n]: e_{i-1}< e_i \geq e_{i+1}\}, \\[3pt]
  \Va(e) & =\{i\in[n]: e_{i-1} \geq e_i <e_{i+1}\}.
\end{align*}
The proposition below can be easily checked.
\begin{proposition}\label{prop:gamma}
For $e \in \I_n$,  we have
$\va(e)-\pk(e)  = 1$.
\end{proposition}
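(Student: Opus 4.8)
The plan is to reduce the identity to an elementary sign-counting argument by encoding the up--down structure of $e$, including the boundary values $e_0=e_{n+1}=+\infty$, as a binary word. For each $i\in\{0,1,\ldots,n\}$ I would set $s_i=1$ if $e_i<e_{i+1}$ and $s_i=0$ otherwise, so that $s_i$ records whether the step from position $i$ to position $i+1$ is an ascent. The boundary conventions then force $s_0=0$ (because $e_0=+\infty>e_1$) and $s_n=1$ (because $e_n\leq n-1<+\infty=e_{n+1}$).

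The key step is to observe that peaks and valleys of $e$ correspond exactly to the two kinds of transitions in the word $s_0s_1\cdots s_n$: straight from the definitions of $\Pk$ and $\Va$, one has $i\in\Pk(e)$ if and only if $(s_{i-1},s_i)=(1,0)$, and $i\in\Va(e)$ if and only if $(s_{i-1},s_i)=(0,1)$ (in particular no index is simultaneously a peak and a valley, since $e_{i-1}<e_i$ and $e_{i-1}\geq e_i$ are incompatible). Consequently $s_i-s_{i-1}$ equals $+1$ precisely when $i$ is a valley, equals $-1$ precisely when $i$ is a peak, and equals $0$ otherwise, so that
$$
\va(e)-\pk(e)=\sum_{i=1}^n(s_i-s_{i-1})=s_n-s_0=1
$$
by telescoping, which is the claim.

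There is no genuine obstacle here; the only points deserving a moment's care are the faithful translation of the strict versus non-strict inequalities in the definitions of $\Pk(e)$ and $\Va(e)$ into the binary encoding, and a sanity check of the degenerate case $n=1$, where $e=0$ yields $\Va(e)=\{1\}$ and $\Pk(e)=\emptyset$, in agreement with $\va(e)-\pk(e)=1$.
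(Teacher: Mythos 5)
Your argument is correct: the translation of $\Pk(e)$ and $\Va(e)$ into the transitions $(1,0)$ and $(0,1)$ of the word $s_0s_1\cdots s_n$ is faithful to the strict/non-strict inequalities, the boundary convention $e_0=e_{n+1}=+\infty$ indeed forces $s_0=0$ and $s_n=1$, and the telescoping sum gives $\va(e)-\pk(e)=1$. The paper offers no proof of this proposition (it is stated as ``easily checked''), so your encoding-and-telescoping verification simply supplies, correctly, the routine argument the authors omit.
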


 Given  $e\in \I_n(100,210,201)$, define the set of {\em special fixed positions} and {\em special unfixed positions} as
\begin{align*}
   \Sf(e) & =\{i\in[n]: e_{i-1}<e_{i-2}=e_i  \neq e_{i+1}\},\\[3pt]
    \Su(e) & =\{i\in[n]:  e_{i-1}<e_{i-2}=e_i  = e_{i+1}\}.
\end{align*}
Then the set of {\em fixed positions},  {\em to-right positions} and {\em to-left positions} of $e$ are defined by
\begin{align*}
  \Fix(e) & = (\Pk(e) \setminus \Su(e))\cup \Va(e) \cup \Sf(e), \\[3pt]
  \Tr(e)  &=\{ i \in [n] \setminus \Fix(e): e_{i-1}=e_i \} \cup \{1 \colon e_1=e_2=0\} \cup \Su(e),\\[3pt]
  \Tl(e) & =\{ i \in [n] \setminus \Fix(e): e_{i-1}<e_i \}.
\end{align*}
%A letter $e_i$ is called a \emph{fixed }(resp. \emph{to right}, \emph{to left}) element if $i \in \Fix(e)$ (resp. $i \in \Tr(e)$, $i \in \Tl(e)$).
These three set-valued statistics will play important roles in our construction of $\gamma$.
A letter $e_i$ of $e$ is called a \emph{fixed }(resp.~\emph{to-right}, \emph{to-left}) {\em element} if $i \in \Fix(e)$ (resp.~$i \in \Tr(e)$, $i \in \Tl(e)$); it is called \emph{crucial} if
$e_{i}=e_{i-2}>e_{i-1}$, i.e., $e_{i-2}e_{i-1}e_i$ forms a $101$-pattern.
%there exists an integer $s \geq 0$ such that
%\[e_{i+1} \neq e_i =e_{i-1}=\cdots = e_{i-s}=e_{i-s-2}>e_{i-s-1}.\]

\begin{example}\label{exam:1}
For $e=000033033346\in\I_{12}(100,210,201)$, we compute that
\begin{equation*}
\Pk(e)=\{5,8\}, \quad\Va(e)=\{4,7,10\},\quad\Sf(e)=\emptyset\quad\text{and}\quad\Su(e)=\{8\}.
\end{equation*}
Thus,
$$
\Fix(e)=\{4,5,7,10\},\quad\Tr(e)=\{1,2,3,6,8,9\}\quad\text{and}\quad\Tl(e)=\{11,12\}.
$$
\end{example}

Clearly, the three sets $\Fix(e)$, $\Tr(e)$ and $\Tl(e)$ are disjoint. In fact, they form a partition of $[n]$ as proved below.
\begin{proposition}\label{prop:gamma1}
For $e \in \I_n(100,210,201)$,  we have $\Fix(e) \cup\Tr(e) \cup \Tl(e)=[n]$.
%\begin{itemize}
%  \item[1.] $\Fix(e) \cup \Tr(e) \cup \Tl(e)=[n]$.
%\vspace{0.1cm}
%  \item[2.] $e_{r_1} < e_{r_2} < \cdots < e_{r_s}.$
%\vspace{0.1cm}
%  \item[3.]   $e_{l_1} \leq e_{l_2} \leq \cdots \leq e_{l_k}.$

  %\item[5.]  Let $\Va(e)=\{v_1,v_2,\ldots,v_m\}$, then $e_{v_1} < e_{v_2} < \cdots < e_{v_m}.$
%
%  \item[6.]  Let $\Pk(e) \setminus \Su(e) =\{p_1,p_2,\ldots,p_d\}$, then $e_{p_1} < e_{p_2} < \cdots < e_{p_d}.$
%\end{itemize}
\end{proposition}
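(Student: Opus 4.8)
The plan is to verify the union $\Fix(e)\cup\Tr(e)\cup\Tl(e)=[n]$ directly, since the three sets have already been noted to be pairwise disjoint. Fix $i\in[n]$ with $i\notin\Fix(e)$; the task is to place $i$ into $\Tr(e)$ or $\Tl(e)$. Throughout, recall the convention $e_0=e_{n+1}=+\infty$. I would split according to the comparison of $e_{i-1}$ with $e_i$: if $e_{i-1}=e_i$ then $i$ lies in the first part of $\Tr(e)$; if $e_{i-1}<e_i$ then $i\in\Tl(e)$ by definition. The entire content is therefore concentrated in the case $e_{i-1}>e_i$.

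For that case the key observation I would isolate is: \emph{if $2\le i\le n$ and $e_{i-1}>e_i$, then $i\in\Fix(e)$.} Since $i\ge2$, the entry $e_{i-1}$ is genuine and hence finite, so, inspecting $e_{i+1}$: the possibility $e_i=e_{i+1}$ would force $i+1\le n$ and make $(i-1,i,i+1)$ an occurrence of the pattern $100$, while $e_i>e_{i+1}$ would force $i+1\le n$ and make $(i-1,i,i+1)$ an occurrence of $210$; both contradict $e\in\I_n(100,210,201)$. Hence $e_i<e_{i+1}$ (this also covers $i=n$, where $e_{n+1}=+\infty$), so $e_{i-1}\ge e_i<e_{i+1}$ gives $i\in\Va(e)\subseteq\Fix(e)$, proving the observation. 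Consequently, if $e_{i-1}>e_i$ and $i\notin\Fix(e)$, then necessarily $i=1$; and since $1\notin\Pk(e)$ (because $e_0=+\infty$) and $1\notin\Sf(e)$, the hypothesis $1\notin\Fix(e)$ forces $1\notin\Va(e)$, i.e.\ $e_1\ge e_2$, i.e.\ $e_2=0=e_1$, so $1\in\Tr(e)$ through the clause $\{1\colon e_1=e_2=0\}$. This exhausts the cases and finishes the proof.

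The argument involves no real obstacle; the only thing requiring care is the boundary bookkeeping — confirming that the artificial values $e_0=e_{n+1}=+\infty$ never spuriously add to or remove from $\Pk(e)$, $\Va(e)$, $\Sf(e)$, that the relations $e_i=e_{i+1}$ and $e_i>e_{i+1}$ can hold only when $i+1\le n$ (so that genuine pattern occurrences on indices in $[n]$ are produced), and that the two special contributions $\{1\colon e_1=e_2=0\}$ and $\Su(e)$ to $\Tr(e)$ are exactly what is needed to catch the index left over by the peak/valley analysis. It is worth remarking that only the avoidance of $100$ and $210$ is used here; the relation $201$ plays no role in this proposition.
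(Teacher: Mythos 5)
Your proof is correct and takes essentially the same route as the paper's: a case analysis on the local comparisons among $e_{i-1}$, $e_i$, $e_{i+1}$ (with the convention $e_0=e_{n+1}=+\infty$), in which $100$- and $210$-avoidance eliminate the descent case and the boundary index $i=1$ is absorbed by the clause $\{1\colon e_1=e_2=0\}$ of $\Tr(e)$. Your organization is slightly leaner than the paper's table: by assuming $i\notin\Fix(e)$ from the outset, the cases $e_{i-1}=e_i$ and $e_{i-1}<e_i$ fall into $\Tr(e)$ and $\Tl(e)$ directly from the definitions, so the paper's subcases comparing $e_i$ with $e_{i-2}$ are not needed.
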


\begin{proof}
Notice that
$\Fix(e) \cup \Tr(e) \cup \Tl(e) \subseteq [n]$. It suffices to show that $i \in \Fix(e) \cup \Tr(e) \cup \Tl(e)$ for any $i \in [n]$.
When $e_{i-1}>e_i>e_{i+1}$, we consider two cases. If $i=1$, then $e_1>e_2$, which is impossible.
If $i>1$, then $e_{i-1}>e_i>e_{i+1}$ forms pattern $210$, a contradiction. Hence, the case
$e_{i-1}>e_i>e_{i+1}$ does not exist.
All the other cases are presented in the following table, where
 $(\rho_1,\rho_2)$ denotes  $e_{i-1} \, \rho_1 \, e_i$ and $e_{i} \, \rho_2 \, e_{i+1}$
and  the second row  indicates the set that $i$ belongs to.
\begin{table}[h]
\label{tab:ei}
\begin{centering}
\begin{tabular}{c|c|c|c|c|c|c|c}
\hline
    $(>,=)$ & $(\geq,<)$ & $(=,\geq)$ & $(<,>)$ &\multicolumn{2}{c|}{$(<,=)$} &  \multicolumn{2}{c}{$(<,<)$} \\
\hline
 \tabincell{c}{ $e_1=e_2=0$\\ $\Tr$ } & $\Fix$ & $\Tr$ & $\Fix$
&  \tabincell{c}{$e_i \neq e_{i-2}$\\  $\Fix$} &  \tabincell{c}{$e_i = e_{i-2}$\\  $\Tr$}
& \tabincell{c}{ $e_i = e_{i-2}$\\ $\Fix$} &  \tabincell{c}{$e_i \neq e_{i-2}$\\  $\Tl$}\\
\hline
\end{tabular}
\end{centering}
%\caption{Cases of $i \in [n]$ belongs to}
\end{table}
In view of the table, $i$ is an element of $\Fix(e)$, $\Tl(e)$ or $\Tr(e)$, as desired.
\end{proof}

{\bf The construction of $\gamma$.}
To construct $\gamma$, we first give a description of a map $\Gamma$ over $\I_n(100,210,201)$ by the following steps:
\begin{itemize}
  \item[1.]  For $i \in \Fix(e)$, keep $e_i$ unchanged.

  \item[2.]  For $i \in \Tr(e)$, from small to big, move $e_i$ to the right, increasing   the present value of $e_i$ by $1$ per element it passes and ending when it meets a greater element.

  \item[3.]   For $i \in \Tl(e)$, from big to small, move $e_i$ to the left, decreasing   the present value of $e_i$ by $1$ per element it passes and ending when it meets an equal element or it just passes a crucial element (of the present sequence) that is equal to its present value.
\end{itemize}
%%%%%%%%%%%%%%second presentation
%%\textcolor{blue}{
%\begin{itemize}
%  \item[1.] Keep  the fixed elements of $e$ unchanged.
%  \item[2.] For the to-right elements of $e$, say $e_i$, from left-to-right, moving $e_i$ to the right, increasing   the present value of $e_i$ by $1$ per element it passes and ending when it meets a greater element.
%  \item[3.]  For the to-left elements of $e$, say $e_i$, from right to left, moving $e_i$ to the left, decreasing  the present value of $e_i$ by $1$ per element it passes and ending when it meets an equal element  or it just passes a crucial element \red{(of the present sequence)} that is equal to its present value.
%\end{itemize}
%%}
Let us define $\gamma=\Psi \circ \Gamma$.
For example, if we take $e=000033033346\in\I_{12}(100,210,201)$ from Example~\ref{exam:1},  then  $\gamma(e)$ can be computed  as in Fig.~\ref{alg:gam}.
\begin{figure}%[!htbp]
\begin{center}
\begin{tikzpicture}[line width=0.8pt,scale=0.55]
%\tikzset{every square node/.style={text=black, inner color=gray, outer color =white, inner sep=0.1cm, aspect=2}}
%\tikzset{mynode/.style=rectangle,draw top color=red, draw bottom color=white,line width=2pt}
\coordinate (O) at (0,0);

\path (O) node {$0$} ++(0.7,0) node {$0$} ++(0.7,0) node {$0$} ++(0.7,0) node[fill=gray!20]  {$0$} ++(0.7,0) node[fill=gray!20] {$3$} ++(0.7,0) node {$3$} ++(0.7,0) node[fill=gray!20] {$0$} ++(0.7,0) node {$3$} ++(0.7,0) node {$3$} ++(0.7,0) node[fill=gray!20] {$3$}++(0.7,0) node {$4$}++(0.7,0) node {$6$}
++(0,0.8) node[blue] {$11$};
\draw[blue, thick][->] (0,0.3) to [out=70,in=90] ++(1,0)  to [out=90,in=90] ++(0.7,0)  to [out=90,in=90] ++(0.7,0)  to [out=90,in=90] ++(0.7,0)  to [out=90,in=90] ++(0.7,0)
to [out=90,in=90] ++(0.7,0)
to [out=90,in=90] ++(0.7,0)
to [out=90,in=90] ++(0.7,0)
to [out=90,in=90] ++(0.7,0)
to [out=90,in=90] ++(0.7,0)
to [out=90,in=120] ++(0.7,0) ;

\path (O)++(0,-1.5)  node {$0$} ++(0.7,0) node {$0$} ++(0.7,0) node[fill=gray!20]  {$0$} ++(0,0.8) node[blue] {$2$}++(0,-0.8)++(0.7,0) node[fill=gray!20] {$3$} ++(0.7,0) node {$3$} ++(0.7,0) node[fill=gray!20] {$0$} ++(0.7,0) node {$3$} ++(0.7,0) node {$3$} ++(0.7,0) node[fill=gray!20] {$3$}++(0.7,0) node {$4$}++(0.7,0) node {$6$}
++(0.7,0) node[blue] {$11$};
\draw[blue, thick][->] (0,-1.2)  to [out=70,in=90] (1,-1.2)  to [out=90,in=120] (1.7,-1.2) ;

\path (O)++(0,-3)  node {$0$}  ++(0.7,0) node[fill=gray!20]  {$0$}
++(0,0.8) node[blue] {$1$}++(0,-0.8)++(0.7,0) node[blue] {$2$}++(0.7,0) node[fill=gray!20] {$3$} ++(0.7,0) node {$3$} ++(0.7,0) node[fill=gray!20] {$0$} ++(0.7,0) node {$3$} ++(0.7,0) node {$3$} ++(0.7,0) node[fill=gray!20] {$3$}++(0.7,0) node {$4$}++(0.7,0) node {$6$}
++(0.7,0) node[blue] {$11$};
\draw[blue, thick][->] (0,-2.7)  to [out=70,in=120] (1,-2.7);

\path (O)++(0,-4.5)   node[fill=gray!20]  {$0$}
++(0.7,0) node[blue] {$1$}++(0.7,0) node[blue] {$2$}++(0.7,0) node[fill=gray!20] {$3$} ++(0.7,0) node {$3$} ++(0.7,0) node[fill=gray!20] {$0$} ++(0.7,0) node {$3$} ++(0.7,0) node {$3$} ++(0.7,0) node[fill=gray!20] {$3$}++(0.7,0) node {$4$}++(0.7,0) node {$6$} ++(0,0.8) node[blue] {$9$}++(0,-0.8)
++(0.7,0) node[blue] {$11$};
\draw[blue, thick][->] (2.8,-4.2)  to [out=70,in=90] ++(1,0)
to [out=90,in=90] ++(0.7,0) to [out=90,in=90]++(0.7,0)
to [out=90,in=90]++(0.7,0) to [out=90,in=90] ++(0.7,0) to
[out=90,in=120] ++(0.7,0) ;

\path (O)++(0,-6)   node[fill=gray!20]  {$0$}
++(0.7,0) node[blue] {$1$}++(0.7,0) node[blue] {$2$}++(0.7,0) node[fill=gray!20] {$3$}  ++(0.7,0) node[fill=gray!20] {$0$} ++(0.7,0) node {$3$} ++(0.7,0) node {$3$} ++(0.7,0) node[fill=gray!20] {$3$}++(0.7,0) node {$4$}++(0.7,0) node {$6$}
++(0,0.8) node[blue] {$7$}++(0,-0.8)
++(0.7,0) node[blue] {$9$}
++(0.7,0) node[blue] {$11$};
\draw[blue, thick][->] (3.5,-5.7)  to [out=70,in=90] ++(1,0)
to [out=90,in=90] ++(0.7,0)  to [out=90,in=90] ++(0.7,0)
to [out=90,in=120] ++(0.7,0) ;

\path (O)++(0,-7.5)   node[fill=gray!20]  {$0$}
++(0.7,0) node[blue] {$1$}++(0.7,0) node[blue] {$2$}++(0.7,0) node[fill=gray!20] {$3$}  ++(0.7,0) node[fill=gray!20] {$0$} ++(0.7,0) node {$3$}  ++(0.7,0) node[fill=gray!20] {$3$}++(0.7,0) node {$4$}
++(0,0.8) node[blue] {$5$}++(0,-0.8)
++(0.7,0) node {$6$}
++(0.7,0) node[blue] {$7$}
++(0.7,0) node[blue] {$9$}
++(0.7,0) node[blue] {$11$};
\draw[blue, thick][->] (3.5,-7.2)  to [out=70,in=90] ++(1,0)
 to [out=90,in=120] ++(0.7,0);

\path (O)++(0,-9)   node[fill=gray!20]  {$0$}
++(0.7,0) node[blue] {$1$}++(0.7,0) node[blue] {$2$}++(0.7,0) node[fill=gray!20] {$3$}  ++(0.7,0) node[fill=gray!20] {$0$} ++(0.7,0) node[fill=gray!20] {$3$}
++(0,0.8) node[blue] {$3$}++(0,-0.8)
++(0.7,0) node {$4$}
++(0.7,0) node[blue] {$5$}
++(0.7,0) node {$6$}
++(0.7,0) node[blue] {$7$}
++(0.7,0) node[blue] {$9$}
++(0.7,0) node[blue] {$11$};
\draw[blue, thick][->] (5.6,-8.7)  to [out=90,in=70] ++(-1,0)
to [out=90,in=90] ++(-0.7,0)
 to [out=90,in=60] ++(-0.7,0);

 \path (O)++(0,-10.5)   node[fill=gray!20]  {$0$}
++(0.7,0) node[blue] {$1$}++(0.7,0) node[blue] {$2$}++(0.7,0) node[fill=gray!20] {$3$}  ++(0.7,0) node[fill=gray!20] {$0$}
++(0.7,0) node[blue] {$3$}
 ++(0.7,0) node[fill=gray!20] {$3$}
 ++(0,0.8) node[blue] {$3$}++(0,-0.8)
++(0.7,0) node {$4$}
++(0.7,0) node[blue] {$5$}
++(0.7,0) node[blue] {$7$}
++(0.7,0) node[blue] {$9$}
++(0.7,0) node[blue] {$11$};
\draw[blue, thick][->] (4.9,-10.2)  to [out=90,in=60] ++(-1,0);

\path (O)++(0,-12)   node[fill=gray!20]  {$0$}
++(0.7,0) node[blue] {$1$}++(0.7,0) node[blue] {$2$}++(0.7,0) node[fill=gray!20] {$3$}  ++(0.7,0) node[fill=gray!20] {$0$}
++(0.7,0) node[blue] {$3$}
 ++(0.7,0) node[blue] {$3$}
 ++(0.7,0) node[fill=gray!20] {$3$}
++(0.7,0) node[blue] {$5$}
++(0.7,0) node[blue] {$7$}
++(0.7,0) node[blue] {$9$}
++(0.7,0) node[blue] {$11$};

\path (O)++(0,-13)   node  {$0$}
++(0.7,0) node {$1$}++(0.7,0) node {$2$}++(0.7,0) node{$3$}  ++(0.7,0) node {$0$}
++(0.7,0) node {$0$}
 ++(0.7,0) node {$0$}
 ++(0.7,0) node {$0$}
++(0.7,0) node {$5$}
++(0.7,0) node {$7$}
++(0.7,0) node {$9$}
++(0.7,0) node {$11$};
\end{tikzpicture}
%%%%%%%%%%%%%%%%%%%%%%%%%%%%%%%%%%%%%%%%
\caption{An example of the algorithm $\gamma$.\label{alg:gam}}
\end{center}
\end{figure}
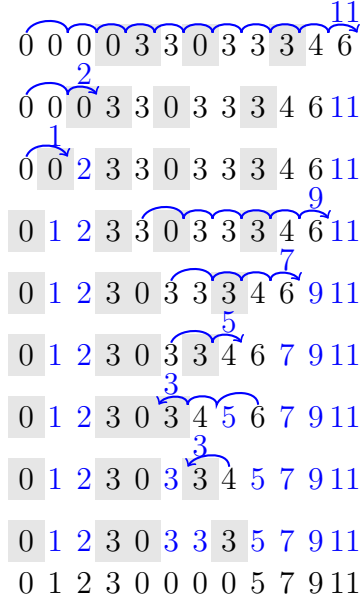

First of all, we need to show that $\gamma$ is well-defined, i.e., show that $\Gamma(e)\in\I_n(100,210,201)$. Given $e \in \I_n(100,210,201)$, assume that $\Tr(e)=\{r_1,r_2,\ldots,r_s\}$ and $\Tl(e)=\{l_1,l_2,\ldots,l_k\}$.
%and $\Fix(e)=\{f_1,f_2,\ldots,f_u\}$
For $i \in [n] \setminus \Fix(e)$, define $M_i(e)$ to be the inversion sequence obtained by
\begin{itemize}
  \item   performing step $2$ in $\Gamma$ on $e_i$ if $i \in \Tr(e)$ or
  \item  performing step $3$ in $\Gamma$
on $e_i$ if $i \in \Tl(e)$.
\end{itemize}
 %performing step $2$ in $\Gamma$ on $z$ if $z \in \Tr(e)$ or performing step $3$ in $\Gamma$
%on $z$ if $z \in \Tl(e)$.
Given a word $w=w_1w_2\cdots w_n$, we call $w_i$  a \emph{left-to-right maximum} if all  elements to the left of $w_i$ are smaller than $w_i$.
 We have the following two propositions.

\begin{proposition}\label{prop:gamma2}
  Suppose that $\bar{e}=M_{r_d}(e)(1 \leq d \leq s)$ where  $e_{r_d}$ ends  before $e_{j+1}$. Then
  %Suppose that $\bar{e}$ is obtained from $e$ by performing step $2$ on $e_{r_1}$, which ends  before $e_{j+1}$. Then
\begin{itemize}
  \item[(1)] $\bar{e} \in \I_n(100,210,201)$.
  \item[(2)] $e_i$ is a fixed element of $\bar{e}$ if and only if $e_i$ is a fixed element of $e$.

  \item[(3)] $\tr(\bar{e})=\tr(e)-1$. More precisely, $\Tr(\bar{e})=
      \{r_1,\ldots, r_{d-1}, {r_{d+1}}-1,\ldots,{r_c}-1, {r_{c+1}}, \ldots, {r_s} \}$, where $r_c \leq j <{r_{c+1}}$.

  \item[(4)] $\tl(\bar{e})=\tl(e)+1$.  More precisely, $\Tl(\bar{e})=\{{l_1} ,\ldots, {l_y}, {l_{y+1}}-1, \ldots, {l_x}-1, \, j, \, {l_{x+1}}, \ldots, {l_k} \}$, where
      $l_y < r_d <l_{y+1}$ and
      $l_x \leq j <{l_{x+1}}$.
  \item[(5)] $\pk(\bar{e})-\su(\bar{e})=\pk(e)-\su(e)$.
\end{itemize}
\end{proposition}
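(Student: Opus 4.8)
The plan is to establish the five assertions for the single rightward move $M_{r_d}$ in the order (1), (2), then (3)--(4) jointly, then (5), after first recording the arithmetic the move produces. Write $v=e_{r_d}$ and let $j$ be the position at which the moved element comes to rest, so that $\bar e$ is obtained from $e$ by deleting $e_{r_d}$, sliding the block $e_{r_d+1}\cdots e_j$ one place left, and inserting the value $V:=v+(j-r_d)$ at position $j$. Unwinding step~2 gives the \emph{passing inequalities} $e_p\le v+(p-1-r_d)$ for $r_d<p\le j$ together with the \emph{stopping inequality} $e_{j+1}\ge V+1$; since $v\le r_d-1$ these force $\bar e\in\I_n$ and yield $\bar e_{j+1}=e_{j+1}>V>\bar e_m$ for $r_d\le m\le j-1$, a fact I will use repeatedly. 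A to-right element always advances at least once, so $j>r_d$. I will also use the local shape of $e$ at $r_d$ afforded by the definition of $\Tr(e)$ and the table in the proof of Proposition~\ref{prop:gamma1}: either $e_{r_d-1}=e_{r_d}$ (the generic case), or $r_d=1$ and $e_1=e_2=0$, or $r_d\in\Su(e)$, in which case $e_{r_d-2}=e_{r_d}=e_{r_d+1}=v>e_{r_d-1}$.

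For (1), suppose $\bar e$ contained a triple $a<b<c$ with $\bar e_a>\bar e_b$ and $\bar e_a>\bar e_c$. If $j\notin\{a,b,c\}$, the strictly increasing bijection $\sigma$ from the positions of $\bar e$ other than $j$ to those of $e$ (the identity on $[1,r_d-1]\cup[j+1,n]$ and $m\mapsto m+1$ on $[r_d,j-1]$) satisfies $\bar e_m=e_{\sigma(m)}$, so $(\sigma(a),\sigma(b),\sigma(c))$ would be a forbidden triple of $e$. If $j=a$, then $\bar e_b,\bar e_c<V$, so the stopping inequality forces $b,c\ge j+2$, and $(j+1,b,c)$ is forbidden in $e$ because $e_{j+1}>V>\bar e_b=e_b$ and $e_{j+1}>\bar e_c=e_c$. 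If $j\in\{b,c\}$, then $\bar e_a>V$; the passing inequalities show no position in $[r_d,j-1]$ has $\bar e$-value above $V-1$, so $a<r_d$ and $e_a=\bar e_a>V>v=e_{r_d}$, and arranging $a$, $r_d$ and the $\sigma$-image of the remaining index in increasing order yields a forbidden triple of $e$. In every case this contradicts $e\in\I_n(100,210,201)$, proving (1).

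For (2), note that $\bar e$ and $e$ agree outside the positions $[r_d,j]$, and that membership of a position in any of $\Pk,\Va,\Sf,\Su$ depends only on a window of at most four consecutive entries about it; hence the only non-moved letters whose $\{\Fix,\Tr,\Tl\}$-status can change are the few that sit, in $\bar e$, within distance two of $r_d$ or of $j$. For each of these I would compare the relevant four-entry window in $e$ with the one in $\bar e$, read off the classification from the table in the proof of Proposition~\ref{prop:gamma1}, and invoke the passing and stopping inequalities, the local shape at $r_d$, and---crucially---the pattern-avoidance of $e$ to rule out the configurations that would otherwise create a mismatch. (A typical instance: in the generic case the passing inequality gives $e_{r_d+1}\le v$, and if moreover $e_{r_d-3}=e_{r_d-1}=v$ then avoidance of $(>,-,>)$ at $(r_d-3,r_d-2,r_d+1)$ forces $e_{r_d+1}=v$, which is just what is needed for the $\Su$-bookkeeping at position $r_d-1$ to agree on both sides.) This finite but lengthy check, one boundary position at a time, is the step I expect to be the main obstacle: each sub-case is elementary, but there are many and each must be matched against the correct forbidden pattern.

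Granting (2), the position map that is the identity on $[1,r_d-1]\cup[j+1,n]$ and sends $m\mapsto m-1$ on $(r_d,j]$ carries $\Fix(e)$ bijectively onto $\Fix(\bar e)$, so by the partition $[n]=\Fix\sqcup\Tr\sqcup\Tl$ of Proposition~\ref{prop:gamma1} it carries $\Tr(e)\cup\Tl(e)$ onto $\Tr(\bar e)\cup\Tl(\bar e)$ and one only has to see how the two parts are distributed. A non-fixed position away from the window keeps its type; a to-right (respectively to-left) position strictly between $r_d$ and $j$ merely drops its index by one while keeping its type; $r_d$ disappears from the to-right set; and the moved element, now at position $j$, satisfies $\bar e_{j-1}=e_j\le V-1<V=\bar e_j<V+1\le\bar e_{j+1}$ with $\bar e_j\ne\bar e_{j-2}$, so it has type $(<,<)$ and belongs to $\Tl(\bar e)$. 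Reading off the indices gives the displayed descriptions of $\Tr(\bar e)$ and $\Tl(\bar e)$, hence $\tr(\bar e)=\tr(e)-1$ and $\tl(\bar e)=\tl(e)+1$. For (5), since $\Su\subseteq\Pk$, $\Pk\setminus\Su\subseteq\Fix$, and $\Su\subseteq\Tr$ is disjoint from $\Fix$, one gets $\Pk(e)\cap\Fix(e)=\Pk(e)\setminus\Su(e)$, hence $\pk(e)-\su(e)=|\Pk(e)\cap\Fix(e)|$, and similarly for $\bar e$; since the $\Fix$-preserving position map of (2) also preserves the property of being a peak (a local statement away from the window, and among the facts pinned down in the case check at the boundary), we conclude $|\Pk(\bar e)\cap\Fix(\bar e)|=|\Pk(e)\cap\Fix(e)|$, which is (5).
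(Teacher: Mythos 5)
Your part (1) is complete and correct, and in fact slightly slicker than the paper's treatment: observing that $100$, $210$, $201$ are exactly the triples whose first entry strictly exceeds the other two lets you handle all three patterns at once, where the paper first shows $\bar e_j$ is a left-to-right maximum and then treats the patterns separately. The passing/stopping inequalities you record are the right tool and your verification that the moved letter lands in $\Tl(\bar e)$ at position $j$ is sound.

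The genuine gap is in items (2)--(5): you never carry out the boundary case analysis, and that analysis \emph{is} the proof. Everything you assert afterwards --- that a to-right (resp.\ to-left) position strictly between $r_d$ and $j$ ``merely drops its index by one while keeping its type,'' that the position map preserves $\Fix$, and that it preserves membership in $\Pk\setminus\Su$ (needed for (5)) --- is precisely the content of the deferred check for the letters $e_{r_d-1}$, $e_{r_d+1}$, $e_j$, $e_{j+1}$, whose four-entry windows change when $e_{r_d}$ is deleted and $V$ is inserted before $e_{j+1}$. You explicitly say you ``would'' compare windows and ``expect this to be the main obstacle,'' and you work only one illustrative sub-case; but the exact index lists in (3) and (4), and the identity $\pk-\su$ in (5), stand or fall on these verifications, each of which requires invoking the correct forbidden pattern of $e$ (for instance, showing $j{+}1\in\Pk\setminus\Su(e)$ iff $j{+}1\in\Pk\setminus\Su(\bar e)$, and $e_{r_d+1}$ fixed/to-right in $e$ iff $\bar e_{r_d}$ fixed/to-right in $\bar e$, split according to $r_d=1$, $r_d\in\Su(e)$, or $e_{r_d-1}=e_{r_d}$). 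This is exactly how the paper proceeds, and it devotes the bulk of its proof to these cases; as submitted, your argument for (2)--(5) is therefore an outline of the paper's approach rather than a proof, with its central step missing.
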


\begin{proof}
To prove item~(1), we first check that $\bar{e}_j=e_{r_d}+j-r_d$
is a left-to-right maximum of $\bar{e}$.
Firstly, we claim that $e_k \leq e_{r_d}$ for $0<k<r_d$. There are three cases to consider.
\begin{itemize}
\item  If $r_d=1$, it certainly holds.
\item If $r_d \in \Su(e)$, then $e_{r_d}=e_{r_d+1}$. Thus, $ e_k \leq e_{r_d}$ for $0<k<r_d$ follows from the fact that $e$ is $100$-avoiding.
\item If $r_d \notin \Su(e)$ and $r_d \neq 1$, then $e_{r_d-1}=e_{r_d}$. Similarly, since $e$ is $100$-avoiding,  we have $ e_k \leq e_{r_d}$ for $0<k<r_d-1$.
\end{itemize}
In all cases the claim is verified.
During $e_{r_d}$'s path to the right, we may check that
$e_{r_d}+x \geq e_{r_d+x+1}$ for $1 \leq x \leq j-r_d-1$.
%and $e_{r_1}+j-r_1 < e_{j+1}$.
Hence,  $\bar{e}_j$ is a left-to-right maximum of $\bar{e}$, which
 implies that $\bar{e}_j$ can not be the $0$ in an instance of pattern
 $100$  in $\bar{e}$.
Moreover, if there exist integers $1\leq j<l<h\leq n$
such that $\bar{e}_j \bar{e}_l\bar{e}_h$ forms a $100$-pattern of $\bar{e}$,
then $e_{j+1} e_l e_h=\bar{e}_{j+1} \bar{e}_l\bar{e}_h$ forms a $100$-pattern of $e$,
a contradiction. Therefore, $\bar{e}$ is $100$-avoiding. Similarly, we may prove that $\bar{e}$ is $210$-avoiding and $201$-avoiding. This completes the proof of item~(1).

In what follows, we wish  to prove items (2) to (5).
Obviously, $\bar{e}$
can be obtained from $e$ by deleting $e_{r_d}$  and inserting $e_{r_d}+j-r_d$ before $e_{j+1}$. To see the relations between
the set valued statistics of $e$ and $\bar{e}$, say $\Tr(e)$ and
$\Tr(\bar{e})$, we only need to care about the change of the roles
of $\bar{e}_{r_d-1}=e_{r_d-1}$, $\bar{e}_{r_d}=e_{r_d+1}$,
$\bar{e}_{j-1}=e_{j}$, $\bar{e}_{j}=e_{r_d}+j-r_d$ and $\bar{e}_{j+1}=e_{j+1}$, since other elements and their nearby stay unchanged.

For the case  with $\bar{e}_{j+1}=e_{j+1}$, we wish to prove that
$j+1 \in \Tl(e)$ (resp.~$ \Pk \setminus\Su(e)$)
if and only if $ j+1 \in \Tl(\bar{e})$ (resp.~$ \Pk \setminus \Su(\bar{e})$).

 On the one hand, we will show that $j+1$ is either in the set $\Tl(e)$
 or $ \Pk \setminus\Su(e)$, and
 if $j+1 \in \Tl(e)$ (resp.~$ \Pk \setminus\Su(e)$), then $ j+1 \in \Tl(\bar{e})$ (resp.~$ \Pk \setminus \Su(\bar{e})$).
Recall that  $\bar{e}_j$ is a left-to-right maximum of
$\bar{e}$. It follows that  $e_{j+1}$ is a left-to-right maximum of $e$.
Hence, we have $e_{j}<e_{j+1}$, $j+1 \notin \Su(e)$ and $j+1 \notin \Sf(e)$.
We consider two cases as follows.
\begin{itemize}
  \item If  $e_{j+1} < e_{j+2}$, then $j+1 \in \Tl(e)$.
        Since $\bar{e}_j\bar{e}_{j+1}\bar{e}_{j+2}=(e_{r_d}+j-r_d)e_{j+1}e_{j+2}$,
        then we deduce that $\bar{e}_j<\bar{e}_{j+1}<\bar{e}_{j+2}$, which implies that
        $j+1 \in \Tl(\bar{e})$.
  \item If  $e_{j+1} \geq e_{j+2}$, then $j+1 \in \Pk \setminus \Su(e)$. Notice that $\bar{e}_j<\bar{e}_{j+1} \geq \bar{e}_{j+2}$ and $\bar{e}_{j+1}$ is a left-to-right maximum of $\bar{e}$. Thus, $j+1 \in \Pk \setminus \Su(\bar{e})$ follows.
\end{itemize}

On the other hand, we show that $j+1$ is either in the set $\Tl(\bar{e})$ or $ \Pk \setminus\Su(\bar{e})$. Furthermore,
 if $j+1 \in \Tl(\bar{e})$ (resp.~$ \Pk \setminus\Su(\bar{e})$), then $ j+1 \in \Tl(e)$ (resp.~$ \Pk \setminus \Su(e)$). By the definition of the map $\Gamma$, we see that $\bar{e}_{j-1} \leq \bar{e}_j<\bar{e}_{j+1}$
%. Recall that $\bar{e}_j$ is a left-to-right maximum of $\bar{e}$,
 and so $j+1 \notin \Su (\bar{e})$ and $j+1 \notin \Sf (\bar{e})$.
 We consider two cases.
 \begin{itemize}
   \item  If $\bar{e}_j<\bar{e}_{j+1} \geq \bar{e}_{j+2}$,
   then $j+1 \in \Pk \setminus \Su (\bar{e})$.
    Since $e_je_{j+1}e_{j+2}=\bar{e}_{j-1}\bar{e}_{j+1}\bar{e}_{j+2}$,
     then $e_j < e_{j+1} \geq e_{j+2}$.
    Notice that $e_{j+1}$ is a left-to-right maximum of $e$.
    We deduce that $j+1 \in \Pk \setminus \Su(e)$.

   \item  If $\bar{e}_j<\bar{e}_{j+1} < \bar{e}_{j+2}$,
   then  $j+1 \in \Tl (\bar{e})$. Since $e_je_{j+1}e_{j+2}=\bar{e}_{j-1}\bar{e}_{j+1}\bar{e}_{j+2}$, we deduce that $e_j<e_{j+1}<e_{j+2}$. Furthermore, the fact $e_{j+1}$ is a left-to-right maximum of $e$ indicates that $j+1 \notin \Sf(e)$. It follows that  $j+1 \in \Tl(e)$.

 \end{itemize}

For the case with $\bar{e}_{r_d}=e_{r_d+1}$, we aim to show that  $e_{r_d+1}$ is a fixed (resp.~to-right, to-left) element of $e$
if and only if $\bar{e}_{r_d }$ is a fixed (resp.~to-right, to-left) element of $\bar{e}$.
We just present the proof of ``if'' direction here, and  the other direction follows directly in view of all the situations of $r_d$ in $\bar{e}$ indicated. To achieve this, we consider three cases.
\begin{itemize}
  \item If $r_d=1$, then $e_1=e_2=0$.
  When $e_3=0$, it can be  easily  checked that $2 \in \Tr(e)$.
  Since $\bar{e}_1=e_2=0$ and  $\bar{e}_2=e_3=0$, we have $1 \in \Tr(\bar{e})$. When $e_3>0$, then $e_1=e_2<e_3$ implies that $2 \in \Va(e) \subseteq \Fix(e)$. We claim that $1 \in \Va(\bar{e}) \subseteq \Fix(\bar{e})$.
  There are two cases. If $j=2$, namely, $e_1$ moves to the right and ends before $e_3$, then $+\infty > \bar{e}_1<\bar{e}_2=e_1+1$.  If $j>2$,
  then $+\infty > \bar{e}_1<\bar{e}_2=e_3$. In both cases the claim is verified.

  \item If $r_d \in \Su(e)$, then $e_{r_d-1}<e_{r_d-2}=e_{r_d}=e_{r_d+1}$.
      We may deduce that $e_{r_d+1}\leq e_{r_d+2}$. Otherwise,
      $e_{r_d-2} e_{r_d-1} e_{r_d+2}$ forms a pattern in $\{100, 201, 210\}$ of $e$.
      If $e_{r_d+1}=e_{r_d+2}$, then $r_d+1 \in \Tr(e)$.
      In this case we have $\bar{e}_{r_d-1}<\bar{e}_{r_d-2}=\bar{e}_{r_d}=\bar{e}_{r_d+1}$,
      and hence, $r_d \in \Su(\bar{e}) \subseteq \Tr(\bar{e})$.
      If $e_{r_d+1}<e_{r_d+2}$, then $r_d +1\in \Va(e) \subseteq \Fix(e)$. We claim that $r_d \in \Sf(\bar{e}) \subseteq \Fix(\bar{e})$. If $j=r_d+1$, then
      $\bar{e}_{r_d-1}<\bar{e}_{r_d-2}=\bar{e}_{r_d} \neq \bar{e}_{r_d+1}=e_{r_d}+1$. If $j>r_d+1$, then
      $\bar{e}_{r_d-1}<\bar{e}_{r_d-2}=\bar{e}_{r_d} \neq \bar{e}_{r_d+1}=e_{r_d+2}$. In both cases, $r_d \in \Sf(\bar{e})$ and the claim is verified.

  \item  If $r_d \notin \Su(e)$ and $r_d \neq 1$, then $e_{r_d-1}=e_{r_d} \geq e_{r_d+1}$. We consider two cases. If $e_{r_d} \geq e_{r_d+1} < e_{r_d+2}$, then $r_d+1 \in \Va(e)$.
     A routine check of the cases
      $j=r_d+1$ and $j>r_d+1$ shows that $r_d  \in \Va(\bar{e})$.
      If $e_{r_d} = e_{r_d+1} \geq e_{r_d+2}$, then $r_d+1 \in \Tr(e)$.  In this case, $\bar{e}_{r_d-1} = \bar{e}_{r_d} \geq \bar{e}_{r_d+1}$ implies that $r_d  \in \Tr(\bar{e})$.
\end{itemize}

Through a routine case-by-case discussion, we may similarly
 check that ${r_d-1} \in \Va(e)$ (resp.~$\Tr(e)$, $\Pk \setminus \Su(e)$)
if and only if ${r_d-1 } \in \Va(\bar{e})$ (resp.~$\Tr(\bar{e})$, $\Pk \setminus \Su(\bar{e})$);
$j \in \Va(e)$ (resp. $\Tl(e)$)
if and only if ${j-1 } \in \Va(\bar{e})$ (resp.~$\Tl(\bar{e})$).

For $\bar{e}_j=e_{r_1}+j-r_1$, we have $\bar{e}_{j-1}<\bar{e}_j<\bar{e}_{j+1}$. Since $\bar{e}_j$ is a
 left-to-right maximum of $\bar{e}$, then $j \notin \Sf(\bar{e})$.
 It follows that $j \in\Tl(\bar e)$.
Combining all the analysis above, items (2) to (5) follow.
\end{proof}

\begin{proposition}\label{prop:gamma3}
 Suppose that $\bar{e}=M_{l_d}(e)(1 \leq d \leq k)$ where  $e_{l_d}$ ends  after $e_{j-1}$. Then
 %Suppose that $\bar{e}$ is obtained from $e$ by performing step $3$ on $e_{l_k}$, which ends  after $e_{j-1}$. Then
\begin{itemize}
 % \item[1.] During the process of $e_{l_k}$'s path to left, its present value is not smaller than  the element it just passes.
%  \vspace{0.1cm}
  \item[(1)] $\bar{e} \in \I_n(100,210,201)$.
  \vspace{0.1cm}
  \item[(2)] $e_i$ is a fixed element of $\bar{e}$ if and only if $e_i$ is a fixed element of $e$.
  \vspace{0.1cm}
  \item[(3)] $\tr(\bar{e})=\tr(e)+1$. More precisely, $\Tr(\bar{e})=\{{r_1},\ldots,{r_x},\, j, \,  {r_{x+1}+1}, \ldots, {r_y}+1, r_{y+1}, \ldots, r_s \}$, where $r_x < j \leq {r_{x+1}}$ and $r_y <l_d<r_{y+1}$.
  \vspace{0.1cm}
  \item[(4)] $\tl(\bar{e})=\tl(e)-1$.  More precisely, $\Tl(\bar{e})=\{{l_1},\ldots,{l_c},  {l_{c+1}+1}, \ldots, {l_{d-1}+1}, {l_{d+1}}, \ldots, {l_{k}} \}$, where $l_c < j \leq {l_{c+1}}$.
  \vspace{0.1cm}
  \item[(5)] $\pk(\bar{e})-\su(\bar{e})=\pk(e)-\su(e)$.
\end{itemize}
\end{proposition}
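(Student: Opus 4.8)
The plan is to transpose the proof of Proposition~\ref{prop:gamma2}, of which this proposition is the exact mirror: one interchanges the roles of $\Tr$ and $\Tl$ and replaces step~2 of $\Gamma$ (moving a to-right letter rightward) by step~3 (moving a to-left letter leftward). Recall that a to-left position $l_d$ satisfies $e_{l_d-1}<e_{l_d}<e_{l_d+1}$, so the vacated end is in fact simpler to handle than in Proposition~\ref{prop:gamma2}; the price is that the filled end becomes more delicate. Writing $v':=e_{l_d}-(l_d-j)$ for the value deposited at the resting position $j$, the moving letter stops either because it meets an equal letter or because it has just hopped over a crucial letter of its present value, so the first task is the corresponding landing lemma: $v'$ satisfies exactly one of \textbf{(a)} $\bar e_{j-1}=v'$, so $\bar e_{j-1}\bar e_j$ is a plateau, or \textbf{(b)} $\bar e_{j-1}>v'$ and $\bar e_{j+1}=\bar e_{j-1}$, so $\bar e_{j-1}\bar e_j\bar e_{j+1}$ is a $101$-pattern with $\bar e_{j+1}$ crucial in $\bar e$; moreover, in both cases the letters hopped over by the moving letter are controlled relative to $v'$ by the ``decrease by one per letter passed'' rule together with the fact that $e\in\I_n(100,210,201)$. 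This replaces the fact ``the deposited letter is a left-to-right maximum'' used in Proposition~\ref{prop:gamma2}, and case~(b), where the deposited letter is \emph{not} a left-to-right maximum, is exactly where the argument genuinely departs from the earlier one.

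For item~(1), since every letter other than the deposited one keeps its relative order and its immediate neighbours in passing from $e$ to $\bar e$, any occurrence of $100$, $210$ or $201$ in $\bar e$ that is not already present in $e$ must involve $\bar e_j=v'$. As in Proposition~\ref{prop:gamma2} one then removes $\bar e_j$ from such an occurrence by substituting a suitable neighbouring letter of $\bar e$ --- the equal letter $\bar e_{j-1}$ in case~(a), and in case~(b) either $\bar e_{j-1}$ or $\bar e_{j+1}$ (which equals $\bar e_{j-1}$ and strictly exceeds $v'$), together with the control on the hopped-over letters --- producing an occurrence of the same pattern in $e$, a contradiction. I expect case~(b) to be the main obstacle: there $\bar e_j$ lies strictly below its two equal neighbours, so the clean left-to-right-maximum argument of Proposition~\ref{prop:gamma2} does not carry over, and one must use that $\bar e_{j+1}$ is crucial, hence $\bar e_{j+1}=\bar e_{j-1}$, to keep the $101$-pattern at positions $j-1,j,j+1$ from lying inside a longer forbidden pattern.

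For items~(2)--(5), the word $\bar e$ is $e$ with $e_{l_d}$ deleted and $v'$ inserted just before the shifted copy of $e_j$, so only the memberships in $\Fix$, $\Tr$, $\Tl$ and $\Pk\setminus\Su$ of the few indices neighbouring the vacated spot $l_d$ (where a to-left position disappears) and the filled spot $j$ (where a to-right position is created) can change. Organising the discussion by the local relations among $e_{l_d-1}$, $e_{l_d}$, $e_{l_d+1}$ and by the two cases~(a) and~(b) around $j$, a routine case-by-case check parallel to the proof of Proposition~\ref{prop:gamma2}, tracking $\Pk$, $\Va$, $\Sf$ and $\Su$ membership throughout, shows that fixed elements are preserved, that $\pk-\su$ is unchanged, and that $\tr$ increases by $1$ while $\tl$ decreases by $1$; combining this with the $+1$ index shift of the positions $j,j+1,\ldots,l_d-1$ yields items~(2)--(5) together with the explicit descriptions of $\Tr(\bar e)$ and $\Tl(\bar e)$ in items~(3)--(4).
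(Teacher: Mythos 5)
Your plan hinges on a ``landing lemma'' whose case (b) is not what step 3 of $\Gamma$ actually produces. The stopping rule says the moving letter halts either when it meets an equal letter, or when the letter it has just hopped over is a crucial letter \emph{equal to its present value}. In the second case one therefore has $\bar e_j=v'=e_j=\bar e_{j+1}$ and, since $e_j$ is crucial in $e$, $e_j=e_{j-2}>e_{j-1}$; that is, $\bar e_{j-1}<\bar e_{j-2}=\bar e_j=\bar e_{j+1}$, so the deposited letter lies \emph{above} its left neighbour, equals its right neighbour, and creates a special unfixed position ($j\in\Su(\bar e)\subseteq\Tr(\bar e)$). Your case (b) instead asserts $\bar e_{j-1}>v'$ and $\bar e_{j+1}=\bar e_{j-1}$, i.e.\ a $101$-pattern at positions $j-1,j,j+1$ with the deposited letter playing the ``0''. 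This configuration never occurs (it even contradicts the rule as you yourself paraphrase it, since the hopped-over crucial letter must equal $v'$), and it is exactly the point where you announce your argument ``genuinely departs'' from Proposition~\ref{prop:gamma2}. Consequently the substitution you sketch for item (1) in case (b) (replacing $\bar e_j$ by $\bar e_{j+1}=\bar e_{j-1}>v'$) and the whole case organisation of items (2)--(5) around the filled spot are carried out with the wrong local picture; the correct bookkeeping must record that $j$ becomes a plateau-type $\Tr$ position in case (a) and a $\Su\subseteq\Tr$ position in case (b), which is what drives $\tr(\bar e)=\tr(e)+1$ and the $\pk-\su$ invariance.

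A second, related omission: the proof of $100$-avoidance (and, later, Lemmas~\ref{lem:order} and~\ref{lem:welldefined}) rests on the fact $(\star)$ that throughout the leftward walk the present value is never smaller than the letter just passed, whence in particular $e_j\le\bar e_j$. You allude to this as being ``controlled by the decrease-by-one rule together with pattern avoidance,'' but it is not automatic: it is established by taking the largest offending index and running a careful case analysis using $100$-, $210$- and $201$-avoidance together with the fact that the walk did not stop earlier. Without an explicit statement and proof of $(\star)$, and with the faulty landing lemma above, the proposal does not deliver items (1)--(5).
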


\begin{proof}
Firstly, it is not hard to see that $\bar{e}$ is $210$-avoiding and $201$-avoiding. Otherwise, for any $210$ or $201$-pattern of $\bar{e}$ containing $\bar{e}_j=e_{l_d}-l_d+j$, there is a $210$ or $201$-pattern of $e$ that replacing $\bar e_j$ by  the greater one of $e_{j-1}$ and $e_j$.

To prove that  $\bar{e}$ is $100$-avoiding, we  need to show the fact:
\begin{align*}
  (\star)\quad&\text{During the process of $e_{l_d}$'s path to the left, its present value is not smaller than } \\
&\text{the element it just passes.}
\end{align*}
Assume to the contrary that $x$ is the largest index with $j \leq x \leq l_d-2$
that $e_{l_d}$'s present value smaller than $e_x$ when it just passes $e_x$.
\begin{itemize}
 \item If $x=l_d-2$, then $e_x > e_{l_d}-2$ by the assumption. Besides, we have $e_x \neq  e_{l_d}-1$, otherwise
$e_{l_d}$'s path to the left will be ended before passing $e_x$. If $e_x =e_{l_d}$, then
$e_x e_{l_d-1}e_{l_d}$ will form a $101$-pattern, which contradicts with $l_d \in \Tl(e)$. If $e_x > e_{l_d}$, then
$e_x e_{l_d-1}e_{l_d}$ will form a $201$-pattern, which contradicts with $e$ is $201$-avoiding.
 \item If $x<l_d-2$, then $e_x > e_{l_d}-l_d+x$ by the assumption. Besides, we have $e_x \neq  e_{l_d}-l_d+x+1$, otherwise
$e_{l_d}$'s path to the left will be ended before passing $e_x$. If $e_x =e_{l_d}-l_d+x+2$ and $e_{x+2} =e_{l_d}-l_d+x+2$, then $e_x e_{x+1}e_{x+2}$
will form a $101$-pattern and
$e_{l_d}$'s path to the left will be ended before $e_{x+2}$, a contradiction. If $e_x =e_{l_d}-l_d+x+2$ and $e_{x+2} <e_{l_d}-l_d+x+2$, then $e_x e_{x+1}e_{x+2}$ will form
a pattern in $\{100,210,201\}$,  a contradiction. Hence, $e_x  \neq e_{l_d}-l_d+x+2$. If $e_x  > e_{l_d}-l_d+x+2$, then $e_x e_{x+1}e_{x+2}$ will form
 a $210$-pattern or a $201$-pattern,  a contradiction.
\end{itemize}
In both cases, there is no value for $e_x$. Hence, the assumption is not true and the fact $(\star)$ is confirmed.

Suppose that $\bar{e}$ contains a $100$-pattern. Since $e$ is
$100$-avoiding, any instance of pattern $100$ of $\bar{e}$
contains $\bar{e}_j$.
Moreover, $\bar{e}_j$ can not play the role of $1$, otherwise replacing $\bar{e}_j$ by  the greater one of $e_{j-1}$ and $e_j$, we will obtain a $100$-pattern of $e$, a contradiction.
If $\bar{e}_j$ is the $0$ in an instance of pattern $100$ of $\bar{e}$,
 then there exists an integer $l <j-1$ such that $\bar{e}_l \bar{e}_{j-1} \bar{e}_{j}$ or $\bar{e}_l \bar{e}_{j} \bar{e}_{j+1}$
forms a $100$-pattern. By fact $(\star)$, we see that $e_j \leq \bar{e}_j$. Notice that $\bar{e}_l=e_l$ and $\bar{e}_{j-1}=e_{j-1}$. Then, $e_l e_{j-1} e_j$ forms a  pattern in $\{100,210\}$,
or $e_l e_{j} e_{j+1}$ forms a pattern in $\{100,201\}$ of $e$,
a contradiction.
Hence, we deduce that $\bar{e}$ is  $100$-avoiding.
This completes the proof of item $(1)$.

Now, we wish  to prove items (2) to (5).
Similarly, we need only care about the change of the roles
of $\bar{e}_{j-1}=e_{j-1}$, $\bar{e}_{j}=e_{l_d}-l_d+j$,
$\bar{e}_{j+1}=e_{j}$, $\bar{e}_{l_d}=e_{l_d-1}$ and $\bar{e}_{l_d+1}=e_{l_d+1}$. Detailed proofs  of the cases
$\bar{e}_{j-1}=e_{j-1}$, $\bar{e}_{l_d}=e_{l_d-1}$ and $\bar{e}_{j}=e_{l_d}-l_d+j$  are included here, while the outline of the
proofs of the remaining ones are presented with the
analogous  details omitted.

For the case with $\bar{e}_{j-1}=e_{j-1}$, we may assume that $j>1$. We aim to show that
$j-1 \in \Tr(e)$ (resp.~$\Su(e)$, $\Pk \setminus \Su(e)$, $\Va(e)$)
if and only if
$j-1 \in \Tr(\bar{e})$ (resp.~$\Su(\bar{e})$, $\Pk \setminus \Su(\bar{e})$, $\Va(\bar{e})$). Analysis of the ``only if'' direction is given below with the proof of the ``if'' direction indicated.
\begin{itemize}
  \item If $e_{j-1} \geq e_j$, then $e_{l_d}$ ends after
   $e_{j-1}$ and $e_{j-1}=\bar{e}_j=e_{l_d}-l_d+j$.
   When $j=2$, obviously, $e_1=e_2=0$. It is easy to check that $1 \in \Tr(e)$ and $1 \in \Tr(\bar{e})$.
   When $j > 2$ and $e_{j-2}=e_{j-1}$, we have $j-1 \in \Tr(e)$. Since $\bar{e}_{j-2}=\bar{e}_{j-1}=\bar{e}_{j}$, we see that
   $j-1 \in \Tr(\bar{e})$.
   When $j > 2$ and $e_{j-2}<e_{j-1}$, we consider two cases.
   If $j-1 \in \Su(e)$, then $e_{j-3}e_{j-2}e_{j-1}$ forms
     a $101$-pattern of $e$.
     Recalling that $\bar{e}_{j-1}=\bar{e}_j$,  $j-1 \in \Su(\bar{e})$ follows.
   If $j-1 \in \Pk \setminus \Su(e)$, then $j-1 \in \Pk \setminus \Su(\bar{e})$ also follows from the fact $\bar{e}_{j-1}=\bar{e}_j$.

  \item If $e_{j-1} < e_j$, then $e_{l_d}$ ends just before
  the crucial element $e_j$ of $e$.
      It follows that $e_{j-1}<e_{j-2}=e_j$, which implies
      $j-1 \in \Va(e)$. On the other hand, since $\bar{e}_{j-1}=e_{j-1}$, $\bar{e}_{j-2}=e_{j-2}$ and
 $\bar{e}_j=e_{l_d}-l_d+j=\bar{e}_{j+1}$, we may deduce that
 $j-1 \in \Va(\bar{e})$.
\end{itemize}

For the case with $\bar{e}_{l_d}=e_{l_d-1}$,
we wish to prove that
$l_d-1 \in \Fix(e)$ (resp.~$\Tl(e)$) if and only if
$l_d  \in \Fix(\bar{e})$ (resp.~$\Tl(\bar{e})$).
We just give the proof of the ``if'' direction with the other direction  indicated.
Since $l_d \in \Tl(e)$, then  $e_{l_d-1}<e_{l_d}< e_{l_d+1}$ holds.
We consider the following three cases.
\begin{itemize}
  \item  If $l_d=2$, it can be easily checked that $1 \in \Va(e)$ and  $2 \in \Va(\bar{e})$.
  \item  If $l_d>2$ and $e_{l_d-2} \geq e_{l_d-1}$, then $l_d-1 \in \Va(e)$ follows from the fact that $e_{l_d-1}<e_{l_d}$.
      If $j=l_d-1$, then $\bar{e}_{l_d-2}=\bar{e}_{l_d-1}=e_{l_d}-1$.
      Recall that $e_{l_d}< e_{l_d+1}$, thus we have
      $\bar{e}_{l_d-1}\geq \bar{e}_{l_d}< \bar{e}_{l_d+1}$.
      If $j<l_d-1$, then $\bar{e}_{l_d-1}=e_{l_d-2}$. Clearly, we have $\bar{e}_{l_d-1}\geq \bar{e}_{l_d}< \bar{e}_{l_d+1}$.
     Therefore, for both cases we deduce that
     $l_d \in \Va(\bar{e})$.

  \item  If $l_d>2$ and $e_{l_d-2} < e_{l_d-1}$, we consider two subcases.
      \begin{itemize}
        \item If $l_d-1 \in \Tl(e)$, by routine check of the cases of $j=l_d-1$ and  $j<l_d-1$, we may deduce that $l_d \in \Tl(\bar{e})$.
        \item If $l_d-1 \in \Sf(e) \subseteq \Fix(e)$, there are two cases.
       When $j=l_d-1$, we have  $\bar{e}_{l_d-1}=e_{l_d}-1=e_{l_d-1}=\bar{e}_{l_d}$. Since $\bar{e}_{l_d}< \bar{e}_{l_d+1}$, we see that
       $l_d  \in \Va(e) \subseteq \Fix(e)$.
        When $j<l_d-1$, then $e_{l_d-3}e_{l_d-2}e_{l_d-1}$ forms a
        $101$-pattern of $e$. It follows that $\bar{e}_{l_d-1}<\bar{e}_{l_d-2}=\bar{e}_{l_d}<\bar{e}_{l_d+1}$,
        which means
        $l_d  \in \Sf(\bar{e}) \subseteq \Fix(\bar{e})$.
      \end{itemize}
\end{itemize}

For the set of $\bar{e}$ which $j$ belongs to, we analysis as follows. Notice that $\bar{e}_{j-1}\bar{e}_{j}\bar{e}_{j+1}=e_{j-1}(e_{l_d}+j-l_d)e_j$.
By definition of the map $\Gamma$, we need to deal with two cases below.
\begin{itemize}
  \item  If $e_{j-1} \geq e_{j}$, then it is not hard to check that
  $\bar{e}_{j-1}=\bar{e}_{j}\geq \bar{e}_{j+1}$. Hence, $j \in \Tr(\bar{e})$.
  \item If $e_{j-1} <e_{j-2}=e_{j}$, then we may deduce that
  $\bar{e}_{j-1} <\bar{e}_{j-2}=\bar{e}_{j}=\bar{e}_{j+1}$.
  It implies that $j \in \Su(\bar{e}) \subseteq \Tr(\bar{e})$.
\end{itemize}

Similarly, we may prove that $e_j$ is a fixed  (resp.~to-right) element of $e$ if and only if $\bar{e}_{j+1}$ is a fixed (resp.~to-right) element of $\bar{e}$; $l_d+1 \in \Tl(e)$ (resp.~$\Pk \setminus \Su (e)$)  if and only if $l_d+1 \in \Tl(\bar{e})$ (resp.~$\Pk \setminus \Su (\bar{e})$). In view of all the discussions above, we
complete the proof of items $(2)$ to $(5)$.
\end{proof}

\begin{lemma}\label{lem:welldefined}
The map $\Gamma:\I_n(100,210,201)\rightarrow\I_n(100,210,201)$ is well-defined.
\end{lemma}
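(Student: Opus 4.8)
The plan is to present $\Gamma$ as an explicit composition of the elementary moves $M_i$ and then to carry the property ``lies in $\I_n(100,210,201)$'' along this composition one move at a time, using Propositions~\ref{prop:gamma2} and~\ref{prop:gamma3} as the single-move inductive step.

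First I would make the composition precise. Since the algorithm only ever moves entries sitting at to-right or to-left positions, the map $\Gamma$ is the composite
\[
\Gamma=\bigl(\text{the Step-3 moves}\bigr)\circ\bigl(\text{the Step-2 moves}\bigr),
\]
where in Step~2 one successively applies $M_r$ as $r$ runs through $\Tr(e)$ in increasing order of the value of the entry it carries, and in Step~3 one successively applies $M_\ell$ as $\ell$ runs, in decreasing order of value, through the to-left positions of the word produced after Step~2. (Each later move $M$ acts on the partially processed word, so the subscript really denotes the \emph{current} position of the entry being moved.) It therefore suffices to prove two things: (i) every elementary move the algorithm calls is well-defined, that is, it terminates at a position of $[n]$ and leaves the running entry in its admissible range; and (ii) if $e'\in\I_n(100,210,201)$ and $M$ is the next move applied to $e'$, then $M(e')\in\I_n(100,210,201)$ and the move scheduled after $M$ is again applicable to $M(e')$.

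Part~(ii) is exactly what Propositions~\ref{prop:gamma2} and~\ref{prop:gamma3} deliver. Their item~(1) gives $M(e')\in\I_n(100,210,201)$, and their items~(2)--(4) describe precisely how the partition $(\Fix,\Tr,\Tl)$ transforms under a single move: the fixed positions stay fixed, the as-yet-unprocessed to-right (resp.\ to-left) positions are merely shifted by $\pm1$, and the entry just moved re-enters the bookkeeping at its new position as a to-left (resp.\ to-right) position. Feeding this back into the algorithm shows that, after part of Step~2 has been executed, the positions still queued for Step~2 are exactly the corresponding shifted positions of the current word and remain to-right positions there; and once Step~2 is finished, the to-left positions of the current word are exactly the indices over which Step~3 iterates. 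Hence each scheduled move is applicable the moment its turn comes. (Item~(5), the invariance of $\pk-\su$, is not needed for well-definedness, but will be used later when showing $\Gamma$ is a bijection.) For part~(i), each intermediate word lies in $\I_n$ and is thus automatically a genuine inversion sequence, so only termination must be checked: a to-right move is stopped no later than by the boundary value $e_{n+1}=+\infty$ and hence ends at a position $\le n$; a to-left move cannot escape to the left of position~$1$, while fact~$(\star)$ from the proof of Proposition~\ref{prop:gamma3} keeps its running value at least as large as the entry it currently passes, hence nonnegative, so the moving entry travels at most $e_\ell\le\ell-1$ steps and lands at a position in $[n]$.

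An induction on the number of moves then finishes the proof, with base case the hypothesis $e\in\I_n(100,210,201)$ and inductive step provided by (i) and (ii). I expect the only genuinely delicate point to be organisational rather than conceptual: one must check that the ``small to big'' and ``big to small'' orders prescribed in Steps~2 and~3 are compatible with the position shifts recorded in items~(3) and~(4) of Propositions~\ref{prop:gamma2} and~\ref{prop:gamma3}, so that at every stage the entry the algorithm intends to move next is unambiguously determined and really is a to-right (resp.\ to-left) entry of the current word. Once that compatibility is spelled out, the lemma follows at once.
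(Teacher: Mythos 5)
Your proposal is correct and follows essentially the same route as the paper's proof: decompose $\Gamma$ into the elementary moves $M_i$, invoke Proposition~\ref{prop:gamma1} so every entry has a well-defined role, use the obvious boundary condition for Step~2 and fact~$(\star)$ for the stopping rule of Step~3, and apply item~(1) of Propositions~\ref{prop:gamma2} and~\ref{prop:gamma3} iteratively to keep each intermediate word in $\I_n(100,210,201)$. The only difference is that you make explicit, via items~(2)--(4), the bookkeeping showing each scheduled move is still applicable to the current word, a point the paper's shorter proof leaves implicit.
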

\begin{proof}
Assume that $t=\Gamma(e)$ for $e \in \I_n(100,210,201)$.
By  Proposition \ref{prop:gamma1}, the role of each element
of  $e$ is defined. Obviously, the ending condition of step $2$ in $\Gamma$ is
well-defined. While, the fact $(\star)$ confirms the  rationality
 of the ending condition of step $3$. Furthermore,
by item~$(1)$ in Proposition~\ref{prop:gamma2} and
item~$(1)$ in Proposition~\ref{prop:gamma3}, we see that $t \in \I_n(100,210,201)$. The proof now is completed.
\end{proof}

  Lemma~\ref{lem:welldefined} asserts  that $\gamma$ is well-defined.
 In the following, we wish to show that $\gamma$ is a bijection.
 It suffices to prove that $\Gamma$ is an involution over $\I_n(100,210,201)$. Then,  the fact $\Gamma \circ \Psi^{-1}$ is the inverse of $\gamma$ follows directly. We need the following Lemma.

%Given $e \in I_n(100,210,201)$ and $z \in [n] \setminus \Fix(e)$, define $M_z(e)$ be the inversion sequence obtained by
%%\begin{itemize}
%%  \item   performing step $2$ in $\Gamma$ on $z$ if $z \in \Tr(e)$
%%  \item  performing step $3$ in $\Gamma$
%%on $z$ if $z \in \Tl(e)$.
%%\end{itemize}
% performing step $2$ in $\Gamma$ on $z$ if $z \in \Tr(e)$ or performing step $3$ in $\Gamma$
%on $z$ if $z \in \Tl(e)$.
%We need the following Lemma.

\begin{lemma}\label{lem:order}
Suppose that $e \in \I_n(100,210,201)$.
For any $a,b \in [n] \setminus \Fix(e)$ and $a<b$, we have
\begin{equation*}
  M_a\circ M_b(e)=M_b\circ M_a(e).
\end{equation*}
\end{lemma}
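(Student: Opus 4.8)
The plan is to establish Lemma~\ref{lem:order} by a careful case analysis on the relative positions of the moves governed by $M_a$ and $M_b$, showing that the two moves act on essentially disjoint portions of the inversion sequence, so their order is immaterial. First I would fix $e\in\I_n(100,210,201)$ and $a<b$ in $[n]\setminus\Fix(e)$, and split into the four cases according to whether each of $a,b$ lies in $\Tr(e)$ or $\Tl(e)$. In each case, the key quantitative fact I need is a description of the \emph{range} of indices touched by a single move $M_c$: for $c\in\Tr(e)$, the letter $e_c$ travels right from position $c$ to some position $j$ (ending just before the first strictly greater element), and the indices strictly between $c$ and $j+1$ are shifted by one; for $c\in\Tl(e)$, $e_c$ travels left from position $c$ to some position $j$ (ending when it meets an equal element or just passes an equal crucial element), touching the indices between $j$ and $c$. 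I would observe that these stopping rules depend only on the comparisons among the letters in that range, and crucially that Propositions~\ref{prop:gamma2} and~\ref{prop:gamma3} already tell us that applying one $M_c$ preserves membership in $\I_n(100,210,201)$, preserves which indices are fixed, and tracks exactly how $\Tr$ and $\Tl$ are relabelled. That bookkeeping is the engine of the argument.

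The main steps, in order: (1) Handle the ``far apart'' subcase where the travel ranges of $M_a(e)$ and $M_b(e)$ are disjoint; here each move sees the same local picture regardless of whether the other has been performed, and commutativity is immediate from the definition of $\Gamma$'s steps. (2) Handle $a,b\in\Tr(e)$ with $a<b$: since step~2 processes to-right elements ``from small to big'' and $e_b$ moving right stops at the first greater element, I argue that $e_a$'s rightward journey either stops before reaching the block where $e_b$ sits, or, if it would pass position $b$, then because $e_a\le e_b$ (using $100$-avoidance as in the proof of Proposition~\ref{prop:gamma2}) the relabelled position of $e_b$ after $M_a$ is exactly $b-1$, and $M_b$ acting there produces the same final word as doing $M_b$ first and then $M_a$; the increments-by-one are additive and independent. (3) Symmetrically handle $a,b\in\Tl(e)$, using fact $(\star)$ and the crucial-element stopping rule: $e_b$ moving left stops before position $a$ unless it passes it, in which case the value comparisons that govern $e_a$'s subsequent leftward move are unaffected by the $+1$-relabelling bookkeeping from $M_b$. (4) Handle the mixed case $a\in\Tr(e)$, $b\in\Tl(e)$ (and $a\in\Tl(e)$, $b\in\Tr(e)$): here one should check that a right-moving letter starting left of $b$ and a left-moving letter starting at $b>a$ cannot ``cross'' in a way that changes either stopping condition — the right-mover stops at a greater element and the left-mover stops at an equal element, and these thresholds are local.

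The main obstacle I anticipate is precisely case (4) together with the boundary situations in (2) and (3) where the two travel ranges abut or overlap at a single index: one must verify that the endpoint of one move is not altered by having performed the other, which requires knowing that the relevant letter being passed (or the crucial-element condition) is the same in $e$, in $M_a(e)$, and in $M_b(e)$. This is exactly where I would lean on Propositions~\ref{prop:gamma2}(2)--(4) and~\ref{prop:gamma3}(2)--(4): they guarantee that fixed elements stay fixed and that $\Tr,\Tl$ are relabelled in the predictable index-shift manner, so the ``identity'' of each letter and its role is preserved under the other move. With that, the two moves genuinely commute, and since $\Gamma$ is built by composing all the $M_{r_d}$ (in increasing order of value) and then all the $M_{l_d}$ (in decreasing order of value), Lemma~\ref{lem:order} shows the result is independent of these orderings, which is the content we need for the subsequent involution argument. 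I expect the write-up to be a somewhat lengthy but routine case check once the travel-range lemma and the relabelling facts are stated cleanly upfront.
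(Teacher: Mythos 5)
Your skeleton matches the paper's: split into the four cases according to membership of $a,b$ in $\Tr(e)$ or $\Tl(e)$, note that disjoint travel ranges make commutativity trivial, and use Propositions~\ref{prop:gamma2} and~\ref{prop:gamma3} to track how $\Tr$, $\Tl$ and the fixed elements are relabelled after one move. But the heart of the paper's proof is missing from your proposal. What actually has to be shown is that the \emph{stopping position} of the second move is unaffected by the first, and the paper does this by first pinning down the relative order of the two ending positions $p_a,p_b$ through explicit present-value inequalities derived from $100/201/210$-avoidance and fact $(\star)$: when $a,b\in\Tr(e)$ one gets the dichotomy $p_a<b-1$ or $p_a>p_b$ (via the claim that to-right values are nondecreasing, so the present value of $e_a$ stays ahead of that of $e_b$ by a fixed positive amount); when $a,b\in\Tl(e)$ one gets $p_b>a+1$ or $p_b<p_a$ (via the claim that to-left elements are left-to-right maxima and that $p_b\neq a+1$); and in the mixed case one needs the trichotomy $a\le p_b<b\le p_a$, or $a<p_a<p_b<b$ with $p_a+1\neq p_b$, or $p_b\le a<p_a<b$, each established by inequalities such as $e_b-b+a+1\le e_a$. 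None of these comparisons appear in your outline; instead you assert that ``the increments-by-one are additive and independent'' and that the relabelling propositions are ``the engine.'' Those propositions only record how the index sets shift after a single move; they do not show that the second letter passes the same elements and stops at the same place, which is exactly what could fail if the relocated first letter (with its inflated or deflated value) becomes the ``greater element'' or the ``equal/crucial element'' that triggers the other letter's stopping rule.

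The gap is most serious in your case (4), the mixed case $a\in\Tr(e)$, $b\in\Tl(e)$, which you dismiss on the grounds that the two letters ``cannot cross in a way that changes either stopping condition'' because the thresholds are ``local.'' In fact, in the subcase $a\le p_b<b\le p_a$ the two letters genuinely pass through each other's territory (the right-mover ends at or beyond $b$, the left-mover ends at or before $b-1$ and at or after $a$), and this is precisely where the paper must argue, using fact $(\star)$ and the inequality $e_b-b+a+1\le e_a$, that $e_a$ will not stop before any element in $\{e_{p_b+1},\ldots,e_b\}$ and that $e_b$ passes the same elements in $e$ as in $M_a(e)$; a parallel argument rules out $e_b$ stopping just after the relocated $e_a$ (the $p_a+1\neq p_b$ claim) using that $e_{p_a+1}$ is a left-to-right maximum. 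Without these quantitative comparisons your write-up would reduce to restating the statement to be proved in each case, so as it stands the proposal does not constitute a proof.
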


\begin{proof}
Firstly, by item (1) in Proposition  \ref{prop:gamma2} and item (1) in Proposition ~\ref{prop:gamma3} we see that $M_a(e) \in \I_n(100,210,201)$ and $M_b(e) \in \I_n(100,210,201)$. Hence, both $M_a\circ M_b$ and $M_b\circ M_a$ are well-defined.
 %By definition of $\Gamma$, to prove this lemma, we need only to show that the number of the elements that $a$ (resp.~$b$) passes in $M_a\circ M_b(e)$ is equal to the number of the elements that $a$ (resp.~$b$) passes in $M_b\circ M_a(e)$.

 Let $\bar{e}=M_b\circ M_a(e)$.
 Suppose that $e_a$ and $e_b$ end at  positions
$p_a$ and $p_b$ of $\bar{e}$, respectively.
 It suffices to prove the following four properties.
\begin{itemize}
  \item [(I).] If $a,b \in \Tr(e)$, then $M_a \circ M_b(e)=M_b \circ M_a(e)$.
  %If $a,b \in \Tr(e)$, then $p_a >p_b$ or $p_a < b$.
  \item [(II).] If $a,b \in \Tl(e)$, then $M_a \circ M_b(e)=M_b \circ M_a(e)$.
  %$p_b <p_a$ or $a<p_b$.
  \item [(III).]If $a \in  \Tr(e)$ and $b \in  \Tl(e)$,
  then  $M_a \circ M_b(e)=M_b \circ M_a(e)$.
  %$a  \leq p_b <b \leq p_a$ or $a<p_a<p_b<b$ or $p_b \leq a<p_a <b$.
 \item [(IV).] If $a \in  \Tl(e)$ and $b \in  \Tr(e)$,
 then $M_a \circ M_b(e)=M_b \circ M_a(e)$.
 %$p_a<a<b <p_b$.
\end{itemize}

To prove (I), we need to show that either  $p_a >p_b$ or $p_a < b-1$ first.
We claim that $e_{r_1} \leq e_{r_2} \leq \cdots \leq e_{r_s}$ with $\Tr(e)=\{r_1,r_2, \ldots, r_s\}$.
For  each ${r_l} \in \Tr(e)$,   we have $e_{r_l}=e_{r_l+1}$
or $e_{r_l-1}=e_{r_l}$. Since $e$ avoids $100$-pattern, we deduce that $e_{x} \leq e_{r_l}$ with $x <r_l$ in both cases. The claim is verified. Further, we consider the following two cases:
\begin{itemize}
\item If $b \in \Su(e)$, then $e_{b-1}<e_{b-2}=e_{b}=e_{b+1}$.
There are two cases.
If $a=b-2$, then the present value of $e_{a}$  is greater than
$e_{b}$ by $2$ when it just passes $e_{b}$. Thus, the present value of $e_{a}$  is always greater than the present value of $e_{b}$ by $2$ when they pass the same element during
$e_{a}$'s  path to the right in $e$ and $e_{b}$'s path to the right in $M_a(e)$.  By
 the ending condition of the to-right elements defined in $\Gamma$,
 we deduce $p_a >p_b$.
When $a<b-2$, namely, $b-a-3 \geq 0$, we consider two subcases. If $e_{a}+b-a-3 < e_{b-2}$, then $e_{a}$
ends on the left of $e_{b-2}$. It follows that $p_a <b-1$.
Otherwise, assume that $e_{a}$
ends on the right of $e_{b-2}$.  Then,
$e_{a}+b-a-3 \geq e_{b-2}$ holds. Therefore, the present value of $e_{a}$  is always greater than that of $e_{b}$ by $3$ when they pass the same element during
$e_{a}$ and $e_{b}$'s paths to the right in $e$ and $M_a(e)$, respectively. Hence, we have $p_a >p_b$.

\item If $b \notin \Su(e)$, then $e_{b-1}=e_{b}$.
When $a=b-1$, the present value of $e_{a}$  is greater than
$e_{b}$ by $1$ when it just passes $e_{b}$. Thus, the present value of $e_{a}$ is always greater than that of $e_{b}$ by $1$ when they pass the same element during $e_{a}$ and $e_{b}$'s paths to the right in $e$ and $M_a(e)$, respectively. Hence, we have $p_a >p_b$.
When $a<b-1$, namely, $b-a-2 \geq 0$, we consider two subcases. If $e_{a}+b-a-2 < e_{b-1}$, then $e_{a}$
ends on the left of $e_{b-1}$. This implies that $p_a <b-1$.
Otherwise, we assume that $e_a$ ends on the right of $e_{b-1}$.
 It follows that $e_{a}+b-a-2 \geq e_{b-1}$. Thus the present value of $e_{a}$ is always greater than that of $e_{b}$  by $2$ when they pass the same element during $e_{a}$ and $e_{b}$'s path to the right in $e$
 and $M_a(e)$, respectively. Hence, we have $p_a >p_b$, as desired.
\end{itemize}

Now, we wish to prove  that
$M_b\circ M_a(e)=M_a\circ M_b(e)$ for $a,b \in \Tr(e)$.
 If $p_a<b-1$, then
it is obvious. If $p_a>p_b$,  then by item $(3)$ in Proposition \ref{prop:gamma2}
we see that $a\in \Tr(M_b(e))$ and $b\in \Tr(M_a(e))$. Moreover,  the elements $e_b$ passes
in $e$ are exactly the elements $e_b$ passes
in $M_a(e)$. Furthermore,  the present value of $e_a$
 during $e_a$'s path to the right in $e$
 is greater than the present value of $e_b$  during $e_b$'s path to the right in $M_a(e)$ at least by $1$ when they pass the same element.
 Therefore, the present value of $e_a$
 during $e_a$'s path to the right in $M_b(e)$
 is not smaller than the present value of $e_b$  during $e_b$'s path to the right in $e$ when they pass the same element in $\{e_{b+1}, e_{b+2},\ldots, e_{p_b+1}\}$. Thus, $e_a$ will not stop just before the element in
 $\{e_{b+1}, e_{b+2},\ldots, e_{p_b+1}, \bar{e}_{p_b}\}$ of $M_b(e)$.
  This means that the present value of $e_a$ during $e_a$'s path to the right of $e$ equals to the present value of $e_a$ during $e_a$'s path to the right of $M_b(e)$ when it passes the same element in
  $\{e_{p_b+2}, \ldots, e_{p_a}\}$. Thus, the number of the elements
  $e_a$ passes in $e$ equals to the number of the elements
  $e_a$ passes in $M_b(e)$. In all, the change of the position of
  $e_b$ brings no difference during $e_a$'s path to the right and
  we deduce that $M_b\circ M_a(e)=M_a\circ M_b(e)$.
The proof of (I) is completed.

To prove (II), we need to show that either $p_b <p_a$ or $1+a<p_b$. Firstly, we claim that $e_{l_x}$ $( 1 \leq x \leq k)$
is a left-to-right maximum of $e$. Since $l_x \in \Tl(e)$,
then $e_{l_{x}-1}<e_{l_x}<e_{l_x+1}$.  If there is some $y$ such that $y<l_x$ and
$e_y>e_{l_x}$, then $e_y e_{l_{x}-1}e_{l_x}$ forms a $201$-pattern, a contradiction. If there is some $y$ such that $y<l_x$ and
$e_y=e_{l_x}$, then we have $e_{l_x-2}=e_{l_x}$. Otherwise, $e_{l_x-2} e_{l_x-1}e_{l_x}$ forms a $201$-pattern or $e_y e_{l_x-2} e_{l_x-1}$ forms a $210$-pattern. Hence, we deduce that $l_x \in \Sf(e)$, which contradicts
to the fact that $l_x \in \Tl(e)$. The claim is verified.
Secondly, we show that $e_b$ can not stop just after $e_{a-1}$ in $M_b \circ M_a(e)$, i.e., $p_b\neq a+1$.
Since $a \in \Tl(e)$, then $e_{a-1}<e_{a}<e_{a+1}$.
It follows that $e_{a+1}$ is a left-to-right maximum and can not be the crucial element of $M_a(e)$.
Notice that  $e_{a-1}$ and $e_{a+1}$ are adjacent in $M_a(e)$.
If $e_b$ moves to the left and stop after $e_{a-1}$,  then
by Fact $(\star)$ we deduce that $e_{a-1}\geq e_{a+1}$. This contradicts with the fact that $e_{a-1}< e_{a+1}$, as desired.

We proceed to show that either $1+a<p_b$ or $p_b <p_a$.  As $p_b\neq a+1$, it suffices to show that if  $p_b\leq a$ then $p_b <p_a$. If  $p_b\leq a$, then $e_b-b+a+2>e_{a+1}$ holds, which implies that $e_b-b+a > e_{a}-1$ (as $e_{a}<e_{a+1}$).
     After $e_b$ passing elements $e_{b-1},\cdots,e_{a+1}, e_{a-1}$,
     the present value of $e_b$ is $e_b-b+a$.
     After $e_a$ passing the element $e_{a-1}$,
     the present value of $e_a$ is $e_a-1$.
     Since $e_b-b+a > e_{a}-1$,  the present value of
     $e_b$ is always greater than that of $e_a$ when they pass
     the same element during
     $e_a$ and $e_b$'s path to the left. We claim that $p_b< p_a$.
     Assume to the contrary that $p_b > p_a$, we consider two cases. If $\bar{e}_{p_b+1}$ is a crucial element of $M_a(e)$,
     then after passing $\bar{e}_{p_b+1}=e_{p_b-1}$ in $e$, the present value
     of $e_a$ is smaller than $e_{p_b-1}$. This contradicts with Fact $(\star)$.
     Otherwise, if $\bar{e}_{p_b-1}=\bar{e}_{p_b}$, then  after passing $\bar{e}_{p_b-1}=e_{p_b-2}$ in $e$, the present value
     of $e_a$ is smaller than $e_{p_b-2}$. This also contradicts with Fact $(\star)$. The claim is verified.

Now, we can prove  that
$M_b\circ M_a(e)=M_a\circ M_b(e)$ for $a,b \in \Tl(e)$.
If $p_b>a+1$, then it is obvious.
If $p_b < p_a$, then by item $(4)$ in Proposition \ref{prop:gamma3}
we see that $a\in \Tl(M_b(e))$ and $b\in \Tl(M_a(e))$. Moreover,
 the elements $e_a$ passes
in $e$ are exactly the elements $e_a$ passes
in $M_b(e)$. We claim that
the number of elements $e_b$ passes in $e$ equals to
the number of elements $e_b$ passes in $M_a(e)$.
The above analysis indicates  that  $e_b-b+a-1 \geq e_a-1$. This means that the present value of $e_b$ during $e_b$'s path
to the left  in $e$ is always not smaller than
 the present value of $e_a$ during $e_a$'s path
to the left  in $e$ when they passes the same element in
$\{e_{p_a-1},\ldots,e_{a-1}\}$. Thus, $e_b$ will not stop just after
an element in $\{e_{p_a-1},\ldots, e_{a-1}\}$ during its path to the left  in $e$.
It follows that the present value of $e_b$ during $e_b$'s path
to the left  of $e$ equals to the present value of $e_b$ during $e_b$'s path
to the left  of $M_a(e)$ when it passes the same element in
$\{e_{p_b},\ldots, e_{p_a-2}\}$.
Therefore, the change of the position of $e_a$ bring no difference
during $e_b$'s path to the left.
 The claim is verified and
we complete the proof of (II).

To prove (III), we wish to show that $a  \leq p_b <b \leq p_a$ or $a<p_a<p_b<b$ or $p_b \leq a<p_a <b$. We consider the following cases.
\begin{itemize}
  \item  If $p_a \geq b$, then we aim to prove that $a  \leq p_b <b \leq p_a$. Firstly, we claim that $e_b-b+a+1\leq e_a$.
      When $p_a=b$, then we have $e_b +1 \leq e_a+b-a<e_{b+1}$.
      Hence, we may deduce that $e_b-b+a+1\leq e_a$.
      When $p_a>b$, then we have $e_a+b-a \geq e_{b+1}$.
      This implies that  $e_b-b+a+1\leq e_a$. The claim is verified.
      Now, we proceed to show that $p_b \geq a$.
        Assume to the contrary that  $p_b<a$, then we see that
      $e_b$ moves to the left in $M_a(e)$ and does not stop after $e_{a-1}$.
      By considering two cases of $a \in \Su(e)$ and $a \notin \Su(e)$, we may
      always deduce that
      $e_b-b+a\geq  e_{a}$, which contradicts with the fact that $e_b-b+a+1\leq e_a$. Hence, we deduce that $p_b \geq a$.
  \item %\lin{(I am lost from here.)}
  If $p_a <b$, then there are two subcases.
  \begin{enumerate}
  \item If $p_a<p_b$, then $a<p_a<p_b<b$.  Moreover, we claim that $p_a+1 \neq p_b$.
  Assume to the contrary that  $p_a+1 =p_b$,  then $e_b$ stops just between $\bar{e}_{p_a}$ and  $e_{p_a+1}$ in $M_a(e)$. This is impossible by considering the facts that
   $\bar{e}_{p_a}<e_{p_a+1}$ and $e_{p_a+1}$ is a left-to-right maximum of $M_a(e)$. The claim is verified.
  \item If $p_a>p_b$, then we need to show that $p_b\leq a$. Since $e_b$ stops on the left of $\bar{e}_{p_a}$,  after passing the elements in
   $\{\bar{e}_{p_a}, e_{p_a}, \ldots, e_{b-1}\}$ in $M_a(e)$
   we have $e_b-b+p_a-1 \geq \bar{e}_{p_a}$ by Fact ($\star$).
   It follows that $e_b-b+p_a -(i+1)\geq \bar{e}_{p_a}-i$ $(1 \leq i \leq p_a-a-2)$, which means that
   the present value of $e_b$ after passing  elements of
   $\{e_{a+2}, \ldots, e_{p_a-1}\}$ in
   $M_a(e)$ is always not smaller than that
   of $e_a$ before passing the same element of
   $\{e_{a+2}, \ldots, e_{p_a-1}\}$ in
    $e$, respectively.  Based on this, we see that $e_b$ will not stop on the right of $e_{a+1}$ in  $M_a(e)$.
    Otherwise,  if $e_b$ stops because of coming  across an equal element  in front, then  $e_a$'s path to the right in $e$ would stop before the same element that $e_b$ stops after, a contradiction; if $e_b$ stops because of just passing an   equal crucial element, then  $e_a$'s path to the right in $e$ would stop before the other  element in pattern $101$ that equals to the crucial elements,  a contradiction. It follows that $p_b \leq a $.
    \end{enumerate}
\end{itemize}

Now, we are ready to show that $M_a \circ M_b(e)=M_b \circ M_a(e)$ for $a \in  \Tr(e)$ and $b \in  \Tl(e)$. We need to deal with the above three cases separately:
\begin{itemize}
\item For the case $a<p_a<p_b<b$, it is obvious based on the fact $p_a+1\neq p_b$.
\item For the case
$p_b \leq a<p_a <b$, by item $(3)$ in Proposition \ref{prop:gamma3} we have $a \in \Tr(M_b(e))$. Moreover, it is easy to check that
the elements $e_a$ passes
in $e$ are exactly the elements $e_a$ passes
in $M_b(e)$. It suffices to show that
the number of elements $e_b$ passes in $e$ equals to the number
of elements $e_b$ passes in $M_a(e)$.
The present value of $e_b$
during $e_b$'s path to the right in $e$ equals to
the present value of $e_b$
during $e_b$'s path to the right in $M_a(e)$ when passing the
elements in $\{e_{p_a}, \ldots, e_{b-1}\}$.
We claim that
$e_b$ will not stop after elements in $\{e_{a+1}, \ldots, e_{p_a-1}\}$
during $e_b$'s path to the left in $e$. Otherwise, assume to the contrary that $e_b$  stops after $e_x$$(\,  a+1 \leq x \leq p_a-1)$.
 Recall that $e_b-b+p_a-1 \geq \bar{e}_{p_a}$.
 Since $\bar{e}_{p_a}=e_a+p_a-a-1$, we deduce that $e_b-b \geq e_a-a$.
We consider two cases.
\begin{enumerate}
  \item If $e_x=e_b-b+x+1$, then we have $e_a-a+x +1\leq e_x$.
  Hence, $e_a-a+x-1 < e_x$, which means that $e_a$ will stop
  on the left of  $e_x$ during the path to the left in $e$. This contradicts with the fact that $p_a>x$.
  \item If $e_{x+1}=e_b-b+x+1$ and $e_{x-1}=e_{x+1}>e_x$,
   then we have $e_a-a+1+x \leq e_{x+1}$. Hence, we have $e_a-a+x < e_{x+1}$, which means that $e_a$ will stop
  on the left of  $e_{x+1}$ during the path to the left in $e$.
  Notice that $x \neq p_a-1$, since $e_{p_a}$ is not crucial.
  Then, the above fact contradicts with that $p_a >x+1$.
\end{enumerate}
The claim is verified.
Then, the present value of $e_b$
during $e_b$'s path to the right in $e$ equals to
the present value of $e_b$
during $e_b$'s path to the right in $M_a(e)$  when passing the
elements in $\{e_{p_b}, \ldots, e_{a-1}\}$, as desired.
\item For the case $a  \leq p_b <b \leq p_a$,
by item $(3)$ in Proposition \ref{prop:gamma3} we have $a \in \Tr(M_b(e))$ and by item $(4)$ in Proposition \ref{prop:gamma2} we have $b \in \Tl(M_a(e))$. Moreover, it can be easily checked that
the elements $e_b$ passes
in $e$ are exactly the elements $e_b$ passes
in $M_a(e)$.
Now, we need to show that
the number of elements $e_a$ passes in $e$ equals to the number
of elements $e_a$ passes in $M_b(e)$.
The present value of $e_a$
during $e_a$'s path to the right in $e$ equals to
the present value of $e_a$
during $e_a$'s path to the right in $M_b(e)$ when passing the
elements in $\{e_{a+1}, \ldots, e_{p_b}\}$ which may be empty.
Further, we claim that $e_a$ will not stop before elements in $\{e_{p_b+1}, \ldots, e_{b}\}$ during $e_a$'s path to the right in $e$. Otherwise, assume to the contrary that $e_a$ stop before $e_x$ ($p_b+1 \leq x \leq b$). It follows that $e_a +x-a-1 <e_x$.
Recall from the analysis above, we have $e_b-b+a+1 \leq e_a$.
Thus, we deduce that $e_b-b+x<e_x$, which means that
the present value of $e_b$ after $e_b$ just passing $e_x$ is smaller than $e_x$. This contradicts with Fact $(\star)$.  The claim is verified.
Then, the present value of $e_a$
during $e_a$'s path to the right in $e$ equals to
the present value of $e_a$
during $e_a$'s path to the right in $M_b(e)$   when passing the
elements in $\{e_{b+1}, \ldots, e_{p_a}\}$, as desired.
\end{itemize}

Notice that item (IV) is obvious based on the fact that $e_a$
and $e_b$ are not adjacent. We now complete the proof.
\end{proof}

\begin{lemma}\label{lem:Gamma}
Given $e \in \I_n(100,210,201)$, then we have $\Gamma^2(e)=e$. Moreover, we have
\begin{equation}\label{eq:jointdis}
  (\Dt,\tr,\tl,\pk-\su)e=(\Dt,\tl,\tr,\pk-\su)\Gamma(e).
\end{equation}

\end{lemma}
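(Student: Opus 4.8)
The plan rests on two things already in hand: Lemma~\ref{lem:order}, which says the single moves $M_i$ pairwise commute, and the bookkeeping in items~(1)--(5) of Propositions~\ref{prop:gamma2} and~\ref{prop:gamma3}. First I would use Lemma~\ref{lem:order} to compute $\Gamma(e)$ by performing its single moves in any convenient order, for instance all to-right moves of step~2 first and then all to-left moves of step~3. Running items~(2)--(4) of the two propositions along such a sequence shows immediately that $\Gamma$ turns every to-right element of $e$ into a to-left element of $\Gamma(e)$, every to-left element into a to-right element, and leaves every fixed element fixed; hence $\Fix(\Gamma(e))=\Fix(e)$, while $\Tr(\Gamma(e))$ and $\Tl(\Gamma(e))$ are precisely the new positions of the old to-left and to-right elements. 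In particular, since $\Gamma$ performs exactly $\tr(e)$ to-right moves and $\tl(e)$ to-left moves, and each of the former changes $(\tr,\tl)$ of the current word by $(-1,+1)$ while each of the latter changes it by $(+1,-1)$, we get $\tr(\Gamma(e))=\tl(e)$ and $\tl(\Gamma(e))=\tr(e)$; and items~(5) give $\pk(\Gamma(e))-\su(\Gamma(e))=\pk(e)-\su(e)$ move by move. Three of the four equalities in~\eqref{eq:jointdis} follow.

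The crux is the claim that a to-right move and the to-left move that undoes it are genuinely inverse to each other: \emph{for $e\in\I_n(100,210,201)$ and $i\in\Tr(e)$, if the to-right move $M_i$ carries $e_i$ to position $j$ (so $j\in\Tl(M_i(e))$ and $(M_i(e))_j=e_i+j-i$ by Proposition~\ref{prop:gamma2}(4)), then applying the to-left move at position $j$ of $M_i(e)$ recovers $e$; and symmetrically, for $i\in\Tl(e)$ the to-right move at the landing position of $M_i$ recovers $e$.} I would prove this by a case analysis shorter than, but parallel to, the proofs of Propositions~\ref{prop:gamma2} and~\ref{prop:gamma3}. In $M_i(e)$ the slots $i,\dots,j-1$ hold $e_{i+1},\dots,e_j$; the to-left move from position $j$ passes these back in reverse, its value stepping down from $e_i+j-i$ to $e_i$ at slot $i$. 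It cannot halt early: when it is about to pass $e_m$ its current value is $e_i+m-i$, and the fact that $e_i$ passed $e_m$ during $M_i$ forces $e_m\le e_i+m-i-1$, so $e_m$ is neither equal to the running value nor (a short computation, using the analogous inequality for $e_{m-2}$, shows) a crucial element equal to it. It does halt at slot $i$: either $(M_i(e))_{i-1}=e_{i-1}=e_i$, triggering the ``equal element'' rule (when $i\notin\Su(e)$, $i\neq1$), or $(M_i(e))_i=e_{i+1}=e_i$ is a crucial element of $M_i(e)$ (when $i\in\Su(e)$, since then $e_{i-2}=e_i=e_{i+1}>e_{i-1}$), with $i=1$ an easy boundary check; reinserting $e_i$ there restores $\dots e_{i-1}e_ie_{i+1}\dots$. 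The case $i\in\Tl(e)$ is dual, using fact~$(\star)$: along the to-left path of $e_i$ the running value never drops below the element just passed, which is exactly what lets the reverse to-right move (whose only stopping rule is meeting a strictly greater element) pass the same elements back and halt at slot $i$ because $e_{i+1}>e_i$.

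To deduce $\Gamma^2(e)=e$, fix an order and write $\Gamma(e)=\mu_N\circ\cdots\circ\mu_1(e)$, where $\mu_1,\dots,\mu_N$ are the single moves $\Gamma$ performs on $e$. By the first paragraph, $\Gamma$ applied to $t:=\Gamma(e)$ moves the same non-fixed tokens, now in the opposite directions; by Lemma~\ref{lem:order} I may perform these moves in the order that first undoes $\mu_N$, then $\mu_{N-1}$, and so on. At the stage where $\mu_k$ is to be undone, the current word is $\mu_k\circ\cdots\circ\mu_1(e)$, which lies in $\I_n(100,210,201)$ by items~(1) of Propositions~\ref{prop:gamma2} and~\ref{prop:gamma3}, and in it the relevant token sits at the landing position of $\mu_k$ (a to-left or to-right position, by items~(3)--(4)); so the crux claim applies and returns $\mu_{k-1}\circ\cdots\circ\mu_1(e)$. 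The composition telescopes, giving $\Gamma(t)=e$.

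It remains to prove $\Dt(\Gamma(e))=\Dt(e)$, and here I would show that a single move already preserves $\Dt$. A to-right element $e_i$ equals one of its neighbours, hence is never a descent top; deleting it from slot $i$ creates no descent top at the deletion junction (check $i\notin\Su(e)$, $i\in\Su(e)$, $i=1$ separately), and reinserting its value $e_i+j-i$ just before $e_{j+1}$ creates none either, since $e_j\le e_i+j-i-1<e_i+j-i<e_{j+1}$; every other adjacency is untouched, so $\Dt$ is unchanged. The to-left case is dual via $(\star)$. Iterating yields $\Dt(\Gamma(e))=\Dt(e)$, which with the above finishes~\eqref{eq:jointdis}. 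I expect the main obstacle to be the crux claim — in particular verifying, across all the $\Su$/crucial/boundary subcases, that the reverse move halts at exactly slot $i$ and not before — together with the careful tracking of shifting positions when composing single moves, which is precisely what Lemma~\ref{lem:order} is there to tame.
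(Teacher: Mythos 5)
Your proposal is correct, and for the involution and the $(\tr,\tl,\pk-\su)$ bookkeeping it follows the same route as the paper: both arguments reduce $\Gamma$ to a sequence of single moves, use Lemma~\ref{lem:order} to reorder them at will, and read off items (2)--(5) of Propositions~\ref{prop:gamma2} and~\ref{prop:gamma3}. The one place where you go beyond the paper is your ``crux claim'': the paper disposes of the statement that a to-right move and the to-left move at its landing position are mutually inverse with a single ``Clearly,'' whereas you actually verify it, using $e_m\le e_i+m-i-1$ (forced by the forward passage) to rule out an early stop of the reverse to-left move, the crucial-element rule to force a stop at slot $i$ exactly in the subcase $i\in\Su(e)$, and fact $(\star)$ for the dual direction; this is a genuine strengthening of the exposition and is the part of the lemma that most deserves it. For $\Dt$ you take a different route: you check that every single move $M_i$ preserves the multiset of descent tops (only three adjacencies change, and the stopping rules give $e_j<e_{j+1}$ at the landing site), whereas the paper argues globally, showing $\Dt(e)\subseteq\Dt(\Gamma(e))$ by locating each descent top next to a fixed peak, observing that $e_ie_{i+1}$ (or $e_{i-s}e_{i+1}$) stays adjacent after steps 1--2 and that step 3 can only insert equal values between them, and then invoking the involution for the reverse inclusion. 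Your move-by-move version is more local and arguably cleaner, since it does not need the involution to close the argument.

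One small correction to your $\Dt$ sketch: it is not true that a to-right element is never a descent top. Since $110$ is not among the forbidden patterns, one can have $e_{i-1}=e_i>e_{i+1}$ with $i\in\Tr(e)$ (e.g.\ $e=0110$, $i=3$), and then $e_i$ is a descent top which the deletion destroys; but the new junction $(e_{i-1},e_{i+1})$ is again a descent whose top has the same value $e_{i-1}=e_i$, so the multiset is unchanged. Your case analysis at the deletion junction should record this compensation rather than assert that nothing is lost or gained; with that adjustment the move-by-move preservation of $\Dt$ goes through as you intend.
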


\begin{proof}
Assume that $b=\Gamma(e)$.
By item $(2)$ in Proposition \ref{prop:gamma2} and item $(2)$ in Proposition \ref{prop:gamma3}, we see that  $e_i$ is a fixed element of $b$ if and only if $e_i$ is a fixed element of $e$. Thus, step $1$
in $\Gamma$ is the inverse of itself.
Clearly, the process of an element's movement to the right is the inverse of its corresponding element's movement to the left and vice versa.
 Following from Lemma \ref{lem:order},
 the order of elements' movement can be exchanged freely.
 It follows that
 step $2$ and step $3$
in $\Gamma$ are  the inverse of steps $3$ and $2$, respectively. Hence, $\Gamma$ is an involution over $\I_n(100,210,201)$.

To show that $\Dt(e)=\Dt(b)$, firstly, we claim that $e_{i_1} < e_{i_2} < \cdots < e_{i_d}$ with $\Des(e)=\{i_1,i_2, \ldots, i_d\}$. Namely, let $i_x \in \Des(e)$ and  $i_y \in \Des(e)$ with $i_x< i_y$,
we have to show that $e_{i_x}< e_{i_y}$. Otherwise, if $e_{i_x}>e_{i_y}$,
then $e_{i_x}e_{i_y}e_{i_y+1}$ forms a $210$-pattern, a contradiction. If $e_{i_x}=e_{i_y}$, we consider three cases. When $e_{i_x+1}>e_{i_y+1}$, then $e_{i_x} e_{i_x+1} e_{i_y+1}$ forms a $210$-pattern.  When $e_{i_x+1}=e_{i_y+1}$, $e_{i_x} e_{i_x+1} e_{i_y+1}$ forms a $100$-pattern.
When $e_{i_x+1}<e_{i_y+1}$, then $e_{i_x} e_{i_x+1} e_{i_y+1}$ forms a $201$-pattern. Hence, $e_{i_x}=e_{i_y}$ is impossible as contradicting
with $e \in \I_n(100,210,201)$. The claim is verified.

Now, based on the fact that $\Gamma$ is an involution, it is enough to show that $\Dt(e) \subseteq \Dt(b)$. For $e_i \in \Dt(e)$, we deduce that $e_{i-1}\leq e_{i}$, otherwise $e_{i-1}e_{i}e_{i+1}$ is an instance of pattern $210$.
Besides, we have $e_{i+1}<e_{i+2}$, otherwise $e_i e_{i+1}e_{i+2}$ forms a $210$-pattern or  a $100$-pattern. It indicates that $i+1 \in \Va(e)$, and hence, $i+1 \in \Fix(e)$. We consider the following two cases.
\begin{itemize}
  \item If $e_{i-1}< e_i$, then it can be easily checked that $i \in \Pk(e) \setminus \Su(e)$. It follows that $i \in \Fix(e)$.
 Assume that $e'$ is obtianed form $e$ by performing steps $1$ and $2$ in $\Gamma$. Clearly, $e_i e_{i+1}$ is consecutive in $e'$. During the performance of
step $3$ on $e'$, elements that are equal to $e_i$ may be inserted between $e_i$ and  $e_{i+1}$, or not. This implies that $e_i \in \Dt(b)$, as desired.
  \item  If $e_{i-1}= e_i$, let $s$ be the integer such that $e_{i-s-1} \neq e_{i-s}=\cdots e_i $. We have $e_{i-s-1} <e_{i-s} $, otherwise $e_{i-s-1}e_{i-s}e_i$ forms a $100$-pattern. It is easily checked that   $i-s \in \Pk(e)$.
      Furthermore, we claim that  $i-s \notin \Su(e)$.
      Otherwise, assume to the contrary that $e_{i-s-1}<e_{i-s-2}=e_{i-s}$, then
      $e_{i-s-2} e_{i-s-1} e_{i+1}$ forms a $100$-pattern
      or a $210$-pattern. This contradicts with $e \in \I_n(100,210,201)$.
      Hence, we have $i-s \in \Fix(e)$. Assume that $e'$ is obtianed form $e$ by performing steps $1$ and $2$ in $\Gamma$. We see that $e_{i-s} e_{i+1}$ is consecutive in $e'$. During the performance of
step $3$ on $e'$, elements that are equal to $e_{i-s}$ may be inserted between $e_{i-s}$ and  $e_{i+1}$,  or not. Since $e_{i-s}=e_i$, then
$e_i \in \Dt(b)$, as desired.
\end{itemize}

The fact $(\tr,\tl,\pk-\su)e=(\tl,\tr,\pk-\su)b$ comes directly from items $(3)$ to $(5)$ in Proposition \ref{prop:gamma2} and items $(3)$ to $(5)$ in Proposition \ref{prop:gamma3}. This completes the proof.
\end{proof}

Now, we are ready to give a proof of Theorem \ref{thm:100101}.
\begin{proof}[{\bf Proof of Theorem \ref{thm:100101}}]
Given $e \in \I_n(100,210,201)$, let $b=\Gamma(e)$ and $t=\Psi(b)$.
It remains to show that
\begin{align}
  \label{eq:Dt}\Dt(e) &= \Dt(t), \\
  \label{eq:asc}\asc(e) &=n-1-\asc(t).
\end{align}

To prove (\ref{eq:Dt}), in view of Lemma \ref{lem:Gamma}, we need to show that $\Dt(b) = \Dt(t)$.
%Given $i \in [n]$, if there exist $j<k<i$ such that $e_j e_k e_i$ forms a $101$-pattern, then  we may always have an integer $s<i$ such that $e_s e_{s+1} e_i$ forms a $101$-pattern. This can be seen through the following analysis.
%Let $e_{s+1} \neq e_s=e_i>e_k$ with $s<k<i$. Then, we claim that $s+1=k$.
%Otherwise,  we assume that $s+1<k$. If $e_{s+1}>e_s$, then $e_{s+1} e_k e_i$
%forms a $201$-pattern, a contradiction. If $e_k<e_{s+1}<e_s$, then $e_s e_{s+1}e_k $ forms a $210$-pattern, a contradiction.  If $e_{s+1}\leq e_k$, then $e_s e_{s+1}e_k $ forms a $201$-pattern or a $100$-pattern, a contradiction.  The claim is verified.
Given $b_i \in \Dt(b)$, we have $b_i > b_{i+1}$.  We claim that
there is no $s<k<i$ such that $b_s=b_i>b_{k}$.
Otherwise, if $b_{k}\geq b_{i+1}$, then $b_s b_{k} b_{i+1}$
will form a $210$-pattern or a $100$-pattern, a contradiction.
If $b_{k}< b_{i+1}$,  then $b_s b_{k} b_{i+1}$
will form a $201$-pattern, a contradiction. The claim is verified.
Hence, we have $t_i=b_i$. Then $t_{i+1} \leq b_{i+1}$
  indicates that $b_i \in \Dt(t)$. It follows that
$ \Dt(b) \subseteq \Dt(t)$.

Given $t_i \in \Dt(t)$, we have $t_i > t_{i+1}$.
There is no $s <k <i $ such that $t_s>t_k=t_{i}$,
otherwise, $t_s t_i t_{i+1}$ forms pattern $210$.
Hence, $b_i=t_i$.
Further, we claim that $b_{i+1}<b_i$. If $b_{i+1}=t_{i+1}$, this clearly holds. If $b_{i+1}>t_{i+1}$, then there exist integers $l<j<i+1$ such that $t_l t_j t_{i+1}$ forms pattern $100$ and $t_l$ is the maximal $1$ among such instances of $100$. We deduce that
$t_l<t_i$, if not, $t_l t_i t_{i+1}$ will form a $210$-pattern.
Hence, $b_{i+1}=t_l<t_i$. The claim is verified and
$\Dt(t)\subseteq \Dt(b)$ follows. The proof of (\ref{eq:Dt})
is completed.

In the following, we will prove (\ref{eq:asc}). By definition, it is easy to check that
\begin{align}\label{eq:ascexpand-e}
  \asc(e)&=\tl(e)+\va(e)+\sf(e)-1\\[3pt]
  \asc(b)&=\tl(b)+\va(b)+\sf(b)-1. \label{eq:ascexpand-b}
\end{align}
Since $\Psi$ change each $101$ to $100$, then $\asc$ decreases by $1$ after each change. Hence, we deduce that
\begin{align*}
  \asc(t)&=\asc(b)-\sf(b)-\su(b)\\[3pt]
   & =\tl(b)+\va(b)-\su(b)-1 \,\,\,[\text{by Equation~\eqref{eq:ascexpand-b}}]\\[3pt]
   &=\tl(b)+\pk(b)-\su(b)\,\,\,[\text{by Proposition~\ref{prop:gamma}}]\\[3pt]
   &=\tr(e)+\pk(e)-\su(e)\,\,\,[\text{by Equation ~\eqref{eq:jointdis}}]\\[3pt]
   &=n-\tl(e)-\va(e)-\sf(e)\,\,\,[\text{by Proposition~\ref{prop:gamma1}}]\\[3pt]
   &=n-1-\asc(e), \,\,\,[\text{by Equation~\eqref{eq:ascexpand-e}}]
\end{align*}
as desired.
\end{proof}

It should be noted that  the map $\Gamma$ can be restricted to
$\I_n(>,-,\geq)$.
which serves as a proof of  Corollary \ref{cor:intersect}.

%\begin{corollary}
%$\Gamma$ is an involution over $\I_n(>,-,>) \cap \I_n(>,\neq,\geq)$.
%\end{corollary}

\begin{proof}[{\bf Proof of  Corollary \ref{cor:intersect}}]
Recall that
 \[\I_n(>,-,\geq)=\I_n(>,-,>) \cap \I_n(>,\neq,\geq)=\I_n(100,101,210,201).\]
We wish to prove that $\Gamma$ is an involution over $\I_n(100,101,210,201)$.
Given such $e$ and let $t=\Gamma(e)$,
it suffices to show that $t \in \I_n(100,101,210,201)$.
By Lemma \ref{lem:Gamma}, we have $t \in \I_n(100,210,201)$.
Hence, to prove this lemma, we need to show that $t$ is
$101$-avoiding.

Given $\Tr(e)=\{r_1,r_2, \ldots, r_s\}$,
let $\bar{e}$ be obtained from $e$ by moving $e_{r_1}$ to the
 right, and ending before $e_{j+1}$. In the proof of item $(1)$ in
 Proposition \ref{prop:gamma2}, we see that
 $\bar{e}_j=e_{r_1}+j-r_1$ is a left-to-right maximum of $\bar{e}$.
Hence, $\bar{e}_j$ can not play the role of $0$ and the second $1$
in a $101$-pattern. If there are integers $l>d>j$ such that
$\bar{e}_j=\bar{e}_l>\bar{e}_d$, then $e_{j+1} e_l e_d$ will form a
$201$-pattern of $e$, a contradiction. Hence, $\bar{e}$ avoids $101$-pattern.
Given $\Tl(e)=\{l_1,l_2, \ldots, l_k\}$,
let $\bar{e}$ be obtained from $e$ by moving $e_{l_d}(1 \leq d \leq l_k)$ to the left, and ending after $e_{j-1}$.  Then,
$\bar{e}_{j-1}=\bar{e}_{j}$ or $\bar{e}_{j}=\bar{e}_{j+1}$.
Thus, any instance of $101$-pattern  of $\bar{e}$ containing $\bar{e}_{j}$ implies an instance of $101$-pattern  of $e$ containing $e_{j-1}$ or $e_j$, a contradiction.
 Hence, $\bar{e}$ avoids $101$-pattern.

By iteratively using the above two facts, we deduce that $t$ avoids the pattern $101$. This completes the proof.
\end{proof}

\section{Lehmer code, $b$-code, and $\gamma$-positivity}

\label{gam:posi}

\subsection{Applications of Lehmer code and Foata--Strehl action}
The Lehmer code $\Theta$ defined in the introduction has plenty of applications in enumerating pattern avoiding inversion sequences (see~\cite{auli2,MS,fl,ly}).
For $(>,-,\geq)$-avoiding inversion sequences, the following application of Lehmer code was proved in~\cite[Theorem~40]{MS}.
\begin{proposition}[Martinez and Savage]
\label{pro:MS40}
The Lehmer code $\Theta$ restricts to a bijection between $\SS_n(2134,2143)$ and $\I_n(>,-,\geq)$. Consequently,
\begin{equation}\label{2134,2143}
\sum_{\pi\in\SS_n(2134,2143)}t^{\Des(\pi)}=\sum_{e\in\I_n(>,-,\geq)}t^{\Asc(e)}.
\end{equation}
\end{proposition}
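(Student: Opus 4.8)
The plan is to prove the two assertions of Proposition~\ref{pro:MS40} in turn: the set-valued identity $\Asc(\Theta(\pi))=\Des(\pi)$, which is classical, and the claim that $\Theta$ restricts to a bijection between $\SS_n(2134,2143)$ and $\I_n(>,-,\geq)=\I_n(100,101,201,210)$. For the first, write $e=\Theta(\pi)$ and compare $e_i$ with $e_{i+1}$. If $\pi_i>\pi_{i+1}$ then $\{x<i:\pi_x>\pi_i\}\cup\{i\}\subseteq\{x<i+1:\pi_x>\pi_{i+1}\}$, so $e_{i+1}\geq e_i+1$; if $\pi_i<\pi_{i+1}$ then $\{x<i+1:\pi_x>\pi_{i+1}\}\subseteq\{x<i:\pi_x>\pi_i\}$, so $e_{i+1}\leq e_i$. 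Hence $i\in\Des(\pi)\iff i\in\Asc(e)$, and once the bijection is in hand this yields~\eqref{2134,2143}. Since $\Theta\colon\SS_n\to\I_n$ is already a bijection, establishing the restricted bijection amounts to showing that $\pi\in\SS_n(2134,2143)$ if and only if $e=\Theta(\pi)$ contains no $i<j<k$ with $e_i>e_j$ and $e_i\geq e_k$.

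The engine for both implications is the following identity, valid whenever $i<j$ and $\pi_i<\pi_j$: splitting $\{x<j:\pi_x>\pi_j\}$ at the index $i$ and comparing with $\{x<i:\pi_x>\pi_i\}$ gives
\[
e_i-e_j=\bigl|\{x<i:\pi_i<\pi_x<\pi_j\}\bigr|-\bigl|\{i<x<j:\pi_x>\pi_j\}\bigr|.
\]
The same splitting shows that $i<j$ and $\pi_i>\pi_j$ force $e_i<e_j$; equivalently, $i<j$ together with $e_i\geq e_j$ forces $\pi_i<\pi_j$. For the implication ``$e$ contains a forbidden pattern $\Rightarrow$ $\pi$ contains $2134$ or $2143$'', take $i<j<k$ with $e_i>e_j$ and $e_i\geq e_k$; the previous remark gives $\pi_i<\pi_j$ and $\pi_i<\pi_k$, and the displayed identity applied to the pair $i<j$ produces an index $a<i$ with $\pi_i<\pi_a<\pi_j$. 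If $\pi_a<\pi_k$, then $a<i<j<k$ with $\pi_i<\pi_a$ and $\pi_j,\pi_k>\pi_a$ is an occurrence of $2134$ or $2143$. If instead $\pi_a>\pi_k$, then $\pi_j>\pi_k$, so $j$ lies in $\{i<x<k:\pi_x>\pi_k\}$; applying the identity to the pair $i<k$ then forces an index $a_2<i$ with $\pi_i<\pi_{a_2}<\pi_k$, and $a_2<i<j<k$ with $\pi_i<\pi_{a_2}<\pi_k<\pi_j$ is an occurrence of $2143$.

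For the converse, note that $\pi$ contains $2134$ or $2143$ if and only if there is an index $a$ having a later index of smaller value --- write $b$ for the least such index --- after which at least two indices carry values larger than $\pi_a$. Let $c<d$ be the two least such indices. By minimality, every index in $(b,c)$ and every index in $(c,d)$ carries a value smaller than $\pi_a$, while $\pi_b<\pi_a<\pi_c$ and $\pi_b<\pi_a<\pi_d$. Feeding this into the displayed identity with $i=b$: for the pair $(b,c)$ the negative term vanishes (values in $(b,c)$ are below $\pi_a<\pi_c$) and the positive term is at least $1$ (it contains $a$), so $e_b>e_c$; for the pair $(b,d)$ the negative term is at most $1$ (only the index $c$ could contribute) while the positive term is again at least $1$, so $e_b\geq e_d$. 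Thus $(b,c,d)$ is a forbidden triple in $e$.

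The genuinely routine ingredients are the two set inclusions and the splitting identity; the only points that need attention are the subcase $\pi_a>\pi_k$ in the forward implication, where a second use of the identity is needed to recover the role of ``$1$'' in the pattern, and the extremal choice of $a,b,c,d$ in the converse, which is precisely what makes the negative counts in the identity collapse to $0$ and to $\leq 1$. Assembling these shows $\Theta$ restricts to the asserted bijection, and~\eqref{2134,2143} then follows from $\Des(\pi)=\Asc(\Theta(\pi))$.
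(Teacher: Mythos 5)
Your argument is correct, and it is worth noting that the paper itself gives no proof of this statement: it is quoted from Martinez and Savage (their Theorem~40), so your write-up is a self-contained verification rather than a parallel of anything in the text. Both halves check out. The classical part $\Asc(\Theta(\pi))=\Des(\pi)$ is proved correctly via the two set inclusions, and your splitting identity
\[
e_i-e_j=\bigl|\{x<i:\pi_i<\pi_x<\pi_j\}\bigr|-\bigl|\{i<x<j:\pi_x>\pi_j\}\bigr| \qquad (i<j,\ \pi_i<\pi_j)
\]
is a clean uniform engine: the subcase $\pi_a>\pi_k$ (second application of the identity to the pair $i<k$, using $e_i\geq e_k$ and the fact that $j$ already contributes to the negative term) and the extremal choice of $b,c,d$ in the converse (which makes the negative counts $0$ and at most $1$) are exactly the delicate points, and you handle both correctly; note also that both target patterns $2134$ and $2143$ are captured simultaneously by the single condition ``two later values above $\pi_a$ after a smaller value,'' which is what lets one forbidden triple of the form $(>,-,\geq)$ cover both. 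In spirit your argument is the same kind of direct Lehmer-code analysis the paper carries out for its Proposition on $\SS_n(2134,2143,3124)$ (which builds on the present statement to handle the extra pattern), so your proof slots in naturally; what it buys compared to merely citing Martinez--Savage is a short, pattern-by-pattern-free derivation in which a single identity controls both directions.
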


The binomial transformation of Fine's sequence can be defined by the algebraic generating function
$$
\frac{2}{1+x+\sqrt{1-6x+5x^2}}=1+x+2x^2+6x^3+21x^4+79x^5+311x^6+1265x^7+\cdots.
$$
 Mansour and Shattuck~\cite{Mansour0} proved that nine classes of permutations avoiding triples of $4$-letter patterns are enumerated by the binomial transformation of Fine's sequence, one of which is the class of $(2134,2143,2314)$-avoiding permutations.
\begin{proposition}
\label{pro:fine}
The Lehmer code $\Theta$ restricts to a bijection between $\SS_n(2134,2143,3124)$ and $\I_n(201,210,110,101,100)$. Consequently, the class $\I_n(201,210,110,101,100)$ is enumerated by the binomial transformation of Fine's sequence.
\end{proposition}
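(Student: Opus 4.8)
The plan is to derive the bijection by adding one forbidden pattern to each side of Proposition~\ref{pro:MS40}. Observe that $\I_n(201,210,110,101,100)=\I_n(100,101,210,201)\cap\I_n(110)=\I_n(>,-,\geq)\cap\I_n(110)$, and that $\SS_n(2134,2143,3124)=\SS_n(2134,2143)\cap\SS_n(3124)$. Since Proposition~\ref{pro:MS40} already gives that $\Theta$ is a bijection from $\SS_n(2134,2143)$ onto $\I_n(>,-,\geq)$, it suffices to prove the translation: for every $\pi\in\SS_n(2134,2143)$ with $e=\Theta(\pi)$, the word $\pi$ contains the pattern $3124$ if and only if $e$ contains the pattern $110$. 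Granting this, $\Theta$ restricts to a bijection from $\SS_n(2134,2143,3124)$ onto $\I_n(201,210,110,101,100)$, which is the first assertion.

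For the translation I would work from $e_i=|\{l<i\colon\pi_l>\pi_i\}|$ and record two elementary facts: (i) for $i<j$, if $\pi_i>\pi_j$ then $e_j\geq e_i+1$ (the index $i$ and the $e_i$ indices $l<i$ with $\pi_l>\pi_i$ all lie in $\{l<j\colon\pi_l>\pi_j\}$), so in particular $e_i=e_j$ forces $\pi_i<\pi_j$; (ii) for $i<j$ with $\pi_i<\pi_j$ one has $e_j-e_i=|\{l\colon i<l<j,\ \pi_l>\pi_j\}|-|\{l\colon l<i,\ \pi_i<\pi_l<\pi_j\}|$. To prove ``$\pi$ contains $3124\Rightarrow e$ contains $110$'', take an occurrence $\pi_b<\pi_c<\pi_a<\pi_d$ at positions $a<b<c<d$ \emph{with $c$ minimal}. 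Avoidance of $2134$ and $2143$ applied to the pair $(a,b)$ forces $d$ to be the unique index beyond $b$ with value larger than $\pi_a$, so $\pi_l<\pi_a$ whenever $b<l<d$; minimality of $c$ excludes any $b<l<c$ with $\pi_l>\pi_c$ (else $(a,b,l,d)$ is a $3124$ with a smaller third index), and $2134$-avoidance excludes any $l<b$ with $\pi_b<\pi_l<\pi_c$ (else $(l,b,c,d)$ is a $2134$). Fact (ii) then yields $e_b=e_c$, and a one-line count gives $e_d=|\{l\leq b\colon\pi_l>\pi_d\}|<|\{l<b\colon\pi_l>\pi_b\}|=e_b$ since $a$ lies in the second set but not the first; hence $e_b=e_c>e_d$ is a $110$ in $e$.

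For the reverse implication ``$e$ contains $110\Rightarrow\pi$ contains $3124$'', start from any occurrence $e_i=e_j>e_k$ with $i<j<k$; fact (i) gives $\pi_i<\pi_j$ and $\pi_j<\pi_k$. I claim no index $l$ with $i<l<j$ has $\pi_l>\pi_j$: otherwise fact (ii) together with $e_i=e_j$ produces $b<i$ with $\pi_i<\pi_b<\pi_j$, and then $(b,i,l,j)$ is a $2143$-occurrence, a contradiction. So $\pi_l<\pi_j$ for all $i<l<j$. Applying fact (ii) to the pair $(j,k)$ and using $e_j>e_k$ produces an index $a<j$ with $\pi_j<\pi_a<\pi_k$; this $a$ cannot lie in $[i,j)$, so $a<i$, whence $a<i<j<k$ with $\pi_i<\pi_j<\pi_a<\pi_k$ is a $3124$-occurrence. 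This establishes the translation, and hence the bijection.

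The enumeration then follows quickly: the map $\pi\mapsto\pi^{-1}$ is a bijection of $\SS_n$ sending $\SS_n(\sigma)$ to $\SS_n(\sigma^{-1})$ for any pattern $\sigma$; as $2134$ and $2143$ are their own inverses and $3124^{-1}=2314$, it restricts to a bijection from $\SS_n(2134,2143,3124)$ onto $\SS_n(2134,2143,2314)$, which Mansour and Shattuck~\cite{Mansour0} proved is enumerated by the binomial transformation of Fine's sequence. Composing with $\Theta$ transfers this count to $\I_n(201,210,110,101,100)$. I expect the two extremal arguments in the translation lemma to be the only delicate points: choosing the $3124$-occurrence with minimal third coordinate so that $e_b=e_c$, and invoking $2143$-avoidance to push the ``$3$'' of the reconstructed pattern to the left of $i$; the rest is routine bookkeeping with the counting identity in fact (ii).
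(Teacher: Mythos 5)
Your proposal is correct and follows essentially the same route as the paper: reduce via Proposition~\ref{pro:MS40} to translating the extra pattern ($3124$ in $\pi$ versus $110$ in $e=\Theta(\pi)$), and then transfer the enumeration through $\pi\mapsto\pi^{-1}$ to Mansour--Shattuck's class $\SS_n(2134,2143,2314)$. The only divergence is internal to that translation lemma -- you use the counting identity for $e_j-e_i$ together with a $3124$-occurrence of minimal third coordinate, whereas the paper normalizes the $110$-occurrence to adjacent equal entries and analyzes the set $A=\{a: j<a<l,\ \pi_a>\pi_l\}$ -- and both versions of the argument check out.
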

\begin{proof} As the mapping $\pi\mapsto \pi^{-1}$ sets up a bijection between $\SS_n(2134,2143,3124)$ and $\SS_n(2134,2143,2314)$, the second statement then follows from the first one and the aforementioned result of Mansour--Shattuck. It remains to show that $\Theta(\SS_n(2134,2143,3124))=\I_n(201,210,110,101,100)$.

Let $\pi\in\SS_n$ and $e=\Theta(\pi)\in\I_n$. Notice that $\I_n(>,-,\geq)=\I_n(201,210,101,100)$. If $e$ contains a pattern in $\{201,210,110,101,100\}$, then we distinguish two cases.
\begin{itemize}
\item If $e\notin\I_n(>,-,\geq)$, then by Proposition~\ref{pro:MS40}, $\pi$ contains at least one of the patterns $2134$ and $2143$.
\item Otherwise, $e\in\I_n(>,-,\geq)$ and $e$ contains the pattern $110$. Then, there exist indices $i<j-1$ such that $e_i=e_{i+1}>e_j$. It follows that $\pi_i<\pi_{i+1}<\pi_j$ and there must exist an index $a$ smaller than $i$ such that $\pi_j>\pi_a>\pi_{i+1}$ (since $e_{i+1}>e_j$). Thus $\pi_a\pi_i\pi_{i+1}\pi_j$ forms the pattern $3124$.
\end{itemize}

Conversely, suppose that $\pi$ contains a pattern in $\{2134,2143,3124\}$.
\begin{itemize}
\item If $\pi\notin\SS_n(2134,2143)$, then by Proposition~\ref{pro:MS40}, $e$ contains at least one pattern in $\{201,210,101,100\}$.
\item Otherwise, $\pi\in\SS_n(2134,2143)$ and  so $\pi$ contains a $3124$-pattern $\pi_i\pi_j\pi_k\pi_l$ such that $\pi_j<\pi_{j+1}$ (as $\pi$ avoids $2134$). Consider the set $A=\{a: j<a< l, \pi_a>\pi_l\}$.
\begin{enumerate}
\item If $A=\emptyset$, then $e_j\geq e_{j+1}$ and $e_j>e_l$.
\item If $A=\{a\}$, then $e_j>e_a$ and $e_j\geq e_l$.
\item If $A=\{a_1,a_2,\ldots\}$, then  $e_j>e_{a_1}$ and $e_j\geq e_{a_2}$.
\end{enumerate}
\end{itemize}
In either case, $e$ contains a pattern in $\{201,210,101,100,110\}$.
\end{proof}

\begin{figure}
\setlength{\unitlength}{1mm}
\begin{picture}(108,38)\setlength{\unitlength}{1mm}
\thinlines
\red{\put(12,15){\dashline{1}(1,0)(40,0)}%3
\put(12,15){\vector(-1,0){0.1}}}

\put(16,19){\dashline{1}(1,0)(32,0)}%4
\put(48,19){\vector(1,0){0.1}}

\put(40,27){\dashline{1}(-1,0)(-16,0)}%6
\put(40,27){\vector(1,0){0.1}}

\put(68,23){\dashline{1}(1,0)(16,0)}%5
\put(84,23){\vector(1,0){0.1}}

\put(100,7){\dashline{1}(-1,0)(-96,0)}%1
\put(4,7){\vector(-1,0){0.1}}

\put(0,3){\line(1,1){32}}\put(-2,0){$-\infty$}
\red{\put(52,15){\circle*{1.3}}\put(52.8,15){$3$}}
\put(16,19){\circle*{1.3}}\put(13,19){$4$}
\put(32,35){\circle*{1.3}}\put(29,35){$8$}

\put(32,35){\line(1,-1){24}}
\put(56,11){\circle*{1.3}}\put(57,9){$2$}
\put(24,27){\circle*{1.3}}\put(21,26.8){$6$}

\put(56,11){\line(1,1){20}}
\put(76,31){\circle*{1.3}}\put(77,31){$7$}
\put(68,23){\circle*{1.3}}\put(65,23){$5$}

\put(76,31){\line(1,-1){28}}\put(103,0){$-\infty$}
\put(100,7){\circle*{1.3}}\put(101,7){$1$}
\end{picture}
\caption{MFS-actions on $46832571\in\SS_8$.\label{mfs:action}}
\end{figure}
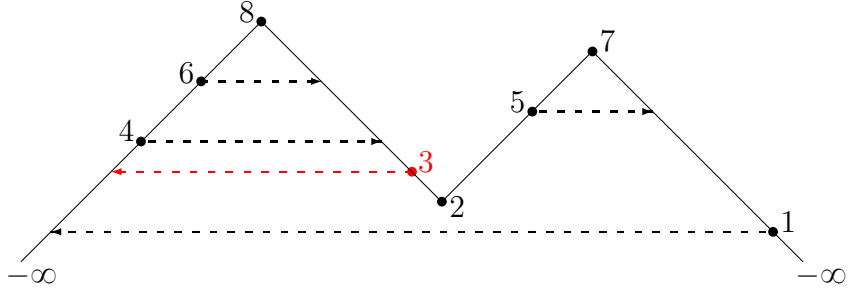

The Foata--Strehl action~\cite{fsh} (see also~\cite{Ath}) on permutations interprets combinatorially the $\gamma$-positivity of the Eulerian polynomials. It can be applied to give a neat proof of~\eqref{gam:kl} with the aid of identity~\eqref{2134,2143}. For $\pi\in\SS_n$ and $a\in[n]$, $\pi$ can be factorized as
$$
\pi=w_1w_2aw_3w_4,
$$
where $w_2$ (resp.~$w_3$) is the maximal contiguous interval (possibly empty) immediately to the left (resp.~right) of $a$ whose letters are all greater than $a$. The {\em Foata--Strehl action} $\varphi_a$ on $\pi$ is defined by
$$
\varphi_a(\pi)=w_1w_3aw_2w_4.
$$
For example, if $a=3$ and $\pi=46832571\in\SS_8$, then $w_1=\emptyset$, $w_2=468$, $w_3=\emptyset$ and $w_4=2571$. Thus, $\varphi_a(\pi)=34682571$; see the point $3$ in Fig.~\ref{mfs:action}. The {\em Modified Foata--Strehl action} (abbreviated as {\em MFS-action}) $\varphi_a'$ is defined by
$$
\varphi'_a(\pi):=
\begin{cases}
\varphi_a(\pi),&\text{if  $\pi_{i-1}<\pi_i<\pi_{i+1}$ or $\pi_{i-1}>\pi_i>\pi_{i+1}$, where $i=\pi^{-1}(a)$};\\
\pi,& \text{otherwise.}
\end{cases}
$$
Here we use the convention $\pi_0=\pi_{n+1}=-\infty$. See Fig.~\ref{mfs:action} for the visualization of the MFS-actions.

An index $i\in[n-2]$ is called a {\em double descent} of  $\pi\in\SS_n$ if $\{i,i+1\}\subseteq\Des(\pi)$. The following fundamental result of MFS-actions was proved in~\cite{kl}.

\begin{lemma}\label{act:gam}
 Suppose $\mathcal{S}\subseteq\SS_n$ is invariant under the MFS-action. Then,
$$
\sum_{\pi\in\mathcal{S}}t^{\des(\pi)}=\sum_{k=0}^{\lfloor(n-1)/2\rfloor}|\widetilde{\mathcal{S}_{n,k}}|t^k(1+t)^{n-1-2k},
$$
where $\widetilde{\mathcal{S}_{n,k}}:=\{\pi\in \mathcal{S}:\des(\pi)=k,\text{ $\pi$ has no double descents and $\pi_{n-1}<\pi_n$}\}$.
\end{lemma}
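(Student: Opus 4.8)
The plan is to run the classical valley-hopping argument that underlies the $\gamma$-positivity of the Eulerian polynomials (see~\cite{fsh,Ath}). First I would check the group-theoretic skeleton: for distinct $a,b\in[n]$ the involutions $\varphi'_a$ and $\varphi'_b$ commute, so that $\{\varphi'_a\}_{a\in[n]}$ generates an action of $(\mathbb{Z}/2\mathbb{Z})^n$ on $\SS_n$. Commutation reduces to the observation that, in the block decomposition $\pi=w_1w_2aw_3w_4$ defining $\varphi_a$, the interval of letters moved by $\varphi_a$ and the one moved by $\varphi_b$ are either disjoint or nested. Since $\mathcal{S}$ is invariant under every $\varphi'_a$, it is a union of orbits, and it suffices to prove the identity one orbit at a time.

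Next I would analyze a single orbit. Using the convention $\pi_0=\pi_{n+1}=-\infty$, classify each value $a\in[n]$, sitting at position $i=\pi^{-1}(a)$, as a \emph{peak} if $\pi_{i-1}<a>\pi_{i+1}$, a \emph{valley} if $\pi_{i-1}>a<\pi_{i+1}$, a \emph{double ascent} if $\pi_{i-1}<a<\pi_{i+1}$, and a \emph{double descent} if $\pi_{i-1}>a>\pi_{i+1}$. The claims to establish are: (a) $\varphi'_a$ fixes $\pi$ when $a$ is a peak or a valley, while for $a$ a double ascent or double descent, $\varphi'_a$ toggles $a$ between those two statuses and leaves the status of every other value unchanged; and (b) turning a double ascent into a double descent raises $\des$ by exactly $1$. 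Granting these, the set $F$ of double-ascent and double-descent values is an orbit invariant, the orbit is parametrized by the subsets $T\subseteq F$ recording which of its elements become double descents, and along this parametrization $\des$ equals $k+|T|$, where $k$ denotes the number of descents of the unique orbit member $\hat\pi$ in which every element of $F$ is a double ascent. Hence each orbit contributes $\sum_{T\subseteq F}t^{k+|T|}=t^k(1+t)^{|F|}$.

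It remains to identify $\hat\pi$ and to compute $|F|$. Because peaks and valleys are never double descents, $\hat\pi$ has no double descent whatsoever in its $(-\infty)$-padded word; unwinding the padding, this says precisely that $\hat\pi$ has no internal double descent and $\hat\pi_{n-1}<\hat\pi_n$, that is, $\hat\pi\in\widetilde{\mathcal{S}_{n,k}}$ (note $\hat\pi\in\mathcal{S}$ since it lies in an orbit contained in $\mathcal{S}$), and conversely every member of $\widetilde{\mathcal{S}_{n,k}}$ is such a minimal orbit representative. Moreover, in a word whose descents are isolated the number of peak values exceeds the number of valley values by $1$ and equals $k+1$; partitioning $[n]$ into peaks, valleys and $F$ then gives $|F|=n-1-2k$, so $0\le k\le\lfloor(n-1)/2\rfloor$. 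Summing the orbit contributions and grouping orbits according to $k=\des(\hat\pi)$ produces $\sum_{\pi\in\mathcal{S}}t^{\des(\pi)}=\sum_{k=0}^{\lfloor(n-1)/2\rfloor}|\widetilde{\mathcal{S}_{n,k}}|\,t^k(1+t)^{n-1-2k}$.

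The hard part is claims (a) and (b) of the middle step: one must verify by a careful inspection of the block move in $\varphi_a$ that sliding the run of letters larger than $a$ from one side of $a$ to the other disturbs neither the type of the two letters at the ends of that run nor the type of any other letter, and that this single move changes the descent count by precisely one. Everything else amounts to routine bookkeeping with the padding and the orbit count above.
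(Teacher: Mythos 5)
Your proposal is correct and is exactly the standard valley-hopping (MFS-orbit) argument: decompose $\mathcal{S}$ into orbits of the commuting involutions $\varphi'_a$, note that each orbit is parametrized by the subsets of its free (double-ascent/double-descent) values with $\des$ increasing by one per toggled value, and identify the unique minimal orbit representative with an element of $\widetilde{\mathcal{S}_{n,k}}$, giving the contribution $t^k(1+t)^{n-1-2k}$. The paper does not reprove this lemma but cites Lin--Kim~\cite{kl}, whose proof is this very argument, so your route coincides with the source's.
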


\begin{lemma}\label{inv:2134}
The set $\SS_n(2134,2143)$ is invariant under the MFS-action.
\end{lemma}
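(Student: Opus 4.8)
The plan is to use that the MFS-action is generated by the commuting involutions $\varphi'_a$, so it suffices to show that each $\varphi'_a$ maps $\SS_n(2134,2143)$ into itself; self-inclusion together with the involution property then upgrades this to $\varphi'_a(\SS_n(2134,2143))=\SS_n(2134,2143)$. Fix $a\in[n]$ and $\pi\in\SS_n(2134,2143)$, and set $i=\pi^{-1}(a)$. We may assume $\pi_{i-1}<\pi_i<\pi_{i+1}$ or $\pi_{i-1}>\pi_i>\pi_{i+1}$ (with $\pi_0=\pi_{n+1}=-\infty$), since otherwise $\varphi'_a(\pi)=\pi$ and there is nothing to prove; then $\sigma:=\varphi'_a(\pi)=\varphi_a(\pi)$, where in the double-ascent case $\pi=w_1aw_3w_4$ with $w_2=\emptyset$ and $\sigma=w_1w_3aw_4$, while in the double-descent case $\pi=w_1w_2aw_4$ with $w_3=\emptyset$ and $\sigma=w_1aw_2w_4$. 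Three observations will drive everything: (i) the only pairs of letters whose left-to-right order differs between $\pi$ and $\sigma$ are the pairs $\{a,x\}$ with $x$ a letter of the moved block ($w_3$, resp.\ $w_2$); (ii) every letter of the moved block is larger than $a$; and (iii) by maximality of the block, if the block is not at the end of $\pi$ (double-ascent case), resp.\ not at the start (double-descent case), then the letter of $w_4$ just after it, resp.\ the letter of $w_1$ just before it, is smaller than $a$.

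Next I would argue by contradiction: suppose $\sigma$ contains $2134$ or $2143$ on positions $p<q<r<s$, so $\sigma_q<\sigma_p$ and $\sigma_p<\sigma_r,\sigma_s$ (the skeleton common to both patterns, which differ only in the order of $\sigma_r$ and $\sigma_s$). By (i), if $a$ is not one of $\sigma_p,\sigma_q,\sigma_r,\sigma_s$, or if it is but no block letter is among them, then these four letters keep their relative order in $\pi$ and $\pi$ contains the same pattern --- impossible. So $a$ is one of the four and some other one of them lies in the moved block, and a short analysis on the role of $a$ disposes of the remaining cases. Using (ii): the case $a=\sigma_p$ cannot occur, since the positions then force $\sigma_q,\sigma_r,\sigma_s$ all beyond the block, leaving none of them in it; and if $a$ is one of the two top letters $\sigma_r,\sigma_s$, then at most one block letter is among the four, it must be the value-larger of the two top letters, and rereading the same four letters in their $\pi$-order yields a (different, but still forbidden) occurrence of $2134$ or $2143$ in $\pi$.

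The crux, which I expect to be the only genuinely delicate point, is the case $a=\sigma_q$, i.e.\ $a$ is the minimum ``$1$'' of the occurrence: here the naive lift really fails, because in $\pi$ the letter $a$ sits on the wrong side of the block letters and the four letters in their $\pi$-order spell $1234$ or $1243$. The remedy is to discard $a$ and substitute the letter $c$ provided by (iii) --- which exists in this case, since the pattern positions force both the block and the adjacent part of $w_1$ or $w_4$ to be nonempty. One then has $c<a<\sigma_p$, and a check of positions in $\pi$ shows that $c$ lies strictly between $\sigma_p$ and the two top letters $\sigma_r,\sigma_s$, so $(\sigma_p,c,\sigma_r,\sigma_s)$ read in positional order is an occurrence of $2134$ or $2143$ in $\pi$ --- again a contradiction. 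Clearing the few leftover sub-cases is routine bookkeeping, and this shows $\varphi_a(\pi)\in\SS_n(2134,2143)$, proving Lemma~\ref{inv:2134}.
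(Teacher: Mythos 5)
Your proof is correct, and while it follows the same basic strategy as the paper (track a forbidden occurrence through $\varphi'_a$, using that only the letter $a$ moves and only across letters larger than $a$), it is genuinely more careful at the one point where care is needed. The paper's proof is two lines: it takes an occurrence $\pi_i\pi_j\pi_k\pi_l$ of $2134$ or $2143$ and asserts that in $\varphi'_a(\pi)$ the letter $\pi_i$ still lies to the left of $\pi_j$ and $\pi_k,\pi_l$ still lie to its right, so the same four letters witness containment; combined with the involution property this gives invariance. That blanket assertion is false exactly in your ``crux'' case, when $a$ equals the minimal letter of the occurrence: for instance $\pi=342156$ contains the occurrence $3,2,5,6$ of $2134$, and $\varphi'_2(\pi)=234156$ places $3$ to the right of $2$, so that particular occurrence does not survive (containment survives only via a different occurrence, $2,1,5,6$). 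Your substitution of the boundary letter $c$ (the first letter of $w_4$, resp.\ the last letter of $w_1$, which is smaller than $a$ by maximality of the moved block) is precisely the device needed to produce such a replacement occurrence, and your checks that $c$ exists and sits positionally between $\sigma_p$ and the two large letters are what make the argument complete; your handling of the cases $a=\sigma_p$ (vacuous) and $a\in\{\sigma_r,\sigma_s\}$ (the $2134$/$2143$ occurrence turns into a $2143$/$2134$ occurrence on the same letters) is also correct. The only cosmetic difference is direction: you lift an occurrence in $\sigma=\varphi'_a(\pi)$ back to $\pi$, whereas the paper pushes an occurrence in $\pi$ forward and then invokes the involution; the two are equivalent. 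In short, what your longer argument buys is a proof that actually covers the case the paper's wording glosses over, at the cost of the case analysis you rightly identify as the delicate part.
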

\begin{proof}
Suppose that $\pi\notin\SS_n(2134,2143)$, then $\pi$ contains subsequence $\pi_i\pi_j\pi_k\pi_l$ that is order
isomorphic
 to  $2134$ or $2143$, i.e., $\pi_j<\pi_i<\min\{\pi_k,\pi_l\}$. By the definition of $\varphi_a'(\pi)$, the letter $\pi_i$ in $\varphi_a'(\pi)$ is still appear to the left of $\pi_j$, while the letters  $\pi_k$ and $\pi_l$ in $\varphi_a'(\pi)$ are still appear to the right of $\pi_j$. Thus, $\varphi_a'(\pi)\notin\SS_n(2134,2143)$. The result then follows from the fact that $\varphi_a'$ is an involution on $\SS_n$ for any $a\in[n]$.
\end{proof}

\begin{proof}[{\bf Proof of Proposition~\ref{pro:kl}}]
By Lemmas ~\ref{act:gam} and ~\ref{inv:2134}, we have
$$
\sum_{\pi\in\SS_n(2134,2143)}t^{\des(\pi)}=\sum_{k=0}^{\lfloor(n-1)/2\rfloor}|\widetilde{\SS_{n,k}}(2134,2143)|t^k(1+t)^{n-1-2k}.
$$
The $\gamma$-positivity expansion~\eqref{gam:kl} then follows by applying  identity~\eqref{2134,2143}
\end{proof}

\subsection{The $b$-code and proof of Theorem~\ref{thm:gamcl}}
Baril and Vajnovszki \cite{Baril} gave another coding of permutations, called {\em $b$-code},
 which preserves a double Eulerian bistatistic. Recent enumerative applications of $b$-code have been found in~\cite{kl,kl2,fjlyz,ly} and our proof of Theorem~\ref{thm:gamcl} is another such instance.

Let us recall briefly the construction of the $b$-code. An  interval $[m,n]$ with $m <n$ is the set $\{x \in \mathbb{N} \colon m \leq x \leq n \}$.
A labeled interval is a pair $(I,l)$, where $I$ is an interval and
$l$ is a nonnegative integer.
Given $\pi=\pi_1 \pi_2 \cdots \pi_n \in \SS_n$ and an integer $i$ with
$0 \leq i <n$, let the $i$-th slice of $\pi$, $U_i(\pi)$,
 be a sequence of labelled intervals constructed recursively by the following process. Set $U_0(\pi)=([0,n],0)$. For $i \geq 1$, assume that $U_{i-1}(\pi)=(I_1,l_1),(I_2,l_2),\cdots,(I_k,l_k)$ is the $(i-1)$-th
slide of $\pi$ and $v$  is the index such that
$\pi_i \in I_v$, then $U_i(\pi)$ is constructed according to the following four cases.
\begin{itemize}
  \item If $\min(I_v)<\pi_i =\max(I_v)$, then $U_i(\pi)$ equals
  \[(I_1,l_1), \cdots , (I_{v-1},l_{v-1}), (J,l_{v+1}),(I_{v+1},l_{v+2}), \cdots , (I_{k-1},l_{k}),(I_{k},l_{k}+1),\]
  where $J=[\min(I_v),\pi_i-1]$.

 \item If $\min(I_v)<\pi_i <\max(I_v)$, then $U_i(\pi)$ equals
  \[(I_1,l_1), \cdots , (I_{v-1},l_{v-1})(H,l_{v}), (J,l_{v+1}),(I_{v+1},l_{v+2}), \cdots , (I_{k-1},l_{k}),(I_{k},l_{k}+1),\]
  where $H=[\pi_i+1, \max(I_v)]$ and $J=[\min(I_v),\pi_i-1]$.

  \item If $\min(I_v)=\pi_i <\max(I_v)$, then $U_i(\pi)$ equals
  \[(I_1,l_1), \cdots , (I_{v-1},l_{v-1})(H,l_{v}), (I_{v+1},l_{v+1}), \cdots , (I_{k-1},l_{k-1}),(I_{k},l_{k}+1),\]
  where $H=[\pi_i+1, \max(I_v)]$.

  \item If $\min(I_v)=\pi_i =\max(I_v)$, then $U_i(\pi)$ equals
  \[(I_1,l_1), \cdots , (I_{v-1},l_{v-1}), (I_{v+1},l_{v+1}), \cdots , (I_{k-1},l_{k-1}),(I_{k},l_{k}+1).\]
\end{itemize}
Let $b(\pi)=b_1 b_2 \cdots b_n \in\I_n$, where $b_i=l_v$
such that  $(I_v,l_v)$ is a labelled interval in the
$(i-1)$-th slice of $\pi$ with $\pi_i \in I_v$.

\begin{example}
For $\pi=6132547\in\SS_6$, we can compute the following slices:
\begin{align*}
&U_0(\pi)=([0,7],0);\\
&U_1(\pi)=([7,7],0),([0,5],1);\\
&U_2(\pi)=([7,7],0),([2,5],1),([0,0],2);\\
&U_3(\pi)=([7,7],0),([4,5],1),([2,2],2),([0,0],3);\\
&U_4(\pi)=([7,7],0),([4,5],1),([0,0],4);\\
&U_5(\pi)=([7,7],0),([4,4],4),([0,0],5);\\
&U_6(\pi)=([7,7],0),([0,0],6).
\end{align*}
Reading the labels from the above slices, we get  $b(\pi)=(0,1,1,2,1,4,0)$.
\end{example}

An interval $I$ is said to be {\em lower} than another interval $J$ if $\max(I)<\min(J)$; otherwise, $I$ is {\em higher} than $J$. The following basic properties  of $b$-code was  observed  in~\cite{kl2}.

\begin{lemma}\label{lem:dec}
Let $\pi\in\SS_n$  and $0\leq i<n$. If $U_{i}(\pi)=(I_1,\ell_1),(I_2,\ell_2),\ldots,(I_k,\ell_k)$,
then
\begin{enumerate}
\item[(i)] the interval $I_1,I_2,\ldots,I_k$ are in decreasing order whose labelings $\ell_1,\ell_2,\ldots,\ell_k$ are strictly increasing;
\item[(ii)] the labelings $ \ell_1,\ell_2,\ldots,\ell_{k-1}$ must appear as entries of $b(\pi)$ after its $i$-th entry.
\end{enumerate}
\end{lemma}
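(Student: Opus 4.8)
The plan is to prove both parts by induction on $i$, exploiting the recursive construction of the slices $U_i(\pi)$. Part~(i) I would handle by upward induction, with base case $U_0(\pi)=([0,n],0)$, which is immediate. For the inductive step, write $U_{i-1}(\pi)=(I_1,\ell_1),\ldots,(I_k,\ell_k)$ and suppose $\pi_i\in I_v$. In each of the four cases in the definition of $U_i(\pi)$ the only structural changes are: (a) the interval $I_v$ is replaced by its upper piece $H=[\pi_i+1,\max(I_v)]$ and/or its lower piece $J=[\min(I_v),\pi_i-1]$, inserted in that order, so that (since $\max(J)<\pi_i<\min(H)$ and $H,J\subseteq I_v$) the list of intervals stays strictly decreasing; and (b) exactly one new label, $\ell_k+1$, is created and placed on the last interval, while every surviving interval keeps one of $\ell_1,\ldots,\ell_k$ in the same relative order. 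Since the inductive hypothesis gives that $\ell_k$ is already the largest label, appending $\ell_k+1$ at the tail keeps the labels strictly increasing, which proves~(i). (The boundary case $v=k$ is read with the convention that the last interval's label is incremented.)

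For part~(ii), I would use a downward induction on $i$, with base case $i=n$: here $U_n(\pi)=(\{0\},\ell)$ has no non-last label and the statement is vacuous. The key preliminary observation, proved by a short induction invoking part~(i), is that the last interval of every slice contains $0$; in particular its minimum is $0$, and since $\pi$ is a permutation of $[n]$ that never takes the value $0$, the index $v$ with $\pi_{i+1}\in I_v$ can equal $k$ only in cases~1 and~2. Now let $U_i(\pi)=(I_1,\ell_1),\ldots,(I_k,\ell_k)$ and $\pi_{i+1}\in I_v$, so that $b_{i+1}=\ell_v$. Going through the four transitions, one verifies that for each $j$ with $1\le j\le k-1$, either $j=v$, whence $\ell_j=\ell_v=b_{i+1}$, or $\ell_j$ reappears in $U_{i+1}(\pi)$ as a label other than the last one; in the latter case the inductive hypothesis applied to $U_{i+1}(\pi)$ places $\ell_j$ among $b_{i+2},\ldots,b_n$. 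Either way $\ell_j\in\{b_{i+1},\ldots,b_n\}$, which is precisely the claim at level $i$.

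The main obstacle is the bookkeeping in part~(ii): one must track precisely how each of the four transitions permutes and shifts the labels $\ell_1,\ldots,\ell_k$, and check that none of $\ell_1,\ldots,\ell_{k-1}$ ends up in the last slot of $U_{i+1}(\pi)$ unless it has already been read off as $b_{i+1}$. The observation that $0$ permanently occupies the rightmost interval is what keeps this indexing consistent, since it rules out the degenerate transitions (cases~3 and~4 with $v=k$) that would destroy the last interval; thus part~(i) is used here both for the decreasing order of intervals and for the location of the value $0$.
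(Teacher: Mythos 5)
Your proof is correct, and I checked the bookkeeping you flag as the main obstacle: in each of the four transitions the non-last labels $\ell_1,\ldots,\ell_{k-1}$ of $U_i(\pi)$ either coincide with $b_{i+1}=\ell_v$ or reappear as non-last labels of $U_{i+1}(\pi)$ (in cases 1 and 2 they shift one slot toward the front, in cases 3 and 4 they stay put), so your downward induction for (ii) closes; and the forward induction for (i) works since each step replaces $I_v$ by its pieces $H$ and/or $J$ in place and appends the single new label $\ell_k+1$ on the lowest interval. Note, however, that there is no argument in the paper to compare yours against: the lemma is stated as a "basic property observed in" \cite{kl2}, with no proof given, so your double induction, together with the auxiliary fact that $0$ never leaves the lowest (last) interval, which is precisely what excludes the degenerate transitions of cases 3 and 4 at $v=k$ and keeps the convention for $v=k$ in cases 1 and 2 consistent, supplies the verification the paper leaves to the citation. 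Two small points you could make explicit when writing this up: case 4 deletes $I_v$ outright rather than replacing it by $H$ and/or $J$ (order preservation is then trivial), and your base case $U_n(\pi)=([0,0],n)$ uses the easy observation that the union of the intervals of $U_i(\pi)$ is $[0,n]\setminus\{\pi_1,\ldots,\pi_i\}$ and that no interval occurring in a slice is empty.
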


\begin{proposition}
The  $b$-code  restricts to a bijection between $\SS_n(24135,24153,42135,42153)$ and $\I_n(>,\neq,>)$. Consequently,
\begin{equation}\label{24135}
\sum_{\pi\in\SS_n(24135,24153,42135,42153)}t^{\Des(\pi)}=\sum_{e\in\I_n(>,\neq,>)}t^{\Asc(e)}.
\end{equation}
\end{proposition}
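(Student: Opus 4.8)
The plan is to establish that $b$ restricts to a bijection between $\SS_n(24135,24153,42135,42153)$ and $\I_n(201,210)=\I_n(>,\neq,>)$; the identity~\eqref{24135} then follows immediately since $b$ is known to carry $\Des$ to $\Asc$ (this is the Eulerian compatibility of the $b$-code recorded in~\cite{Baril,kl2}, and is implicit in Lemma~\ref{lem:dec}). Since $b$ is a bijection from $\SS_n$ to $\I_n$, it suffices to prove the equivalence: for $\pi\in\SS_n$ and $e=b(\pi)$, the sequence $e$ contains a $201$- or $210$-pattern if and only if $\pi$ contains one of the four patterns $24135$, $24153$, $42135$, $42153$. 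Observe that these four patterns are exactly $\{2,4\}$ in the first two positions (in either order), followed by $1$, then $\{3,5\}$ in the last two positions (in either order); equivalently they are the patterns $\sigma$ of length $5$ with $\sigma_3=1$, $\{\sigma_1,\sigma_2\}=\{2,4\}$, $\{\sigma_4,\sigma_5\}=\{3,5\}$. The two ``wildcard'' pairs are precisely what one expects from the $b$-code: an occurrence of a repeated or descending value in $e$ forces a left-to-right pattern in $\pi$ but leaves some freedom in the relative order within each flanking pair.

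First I would analyze what it means for $e_j>e_k$ with $j<k$ (the common requirement $e_j>e_k$ underlies both $201=\!$ ``$e_i>e_j>e_k$ with... '' wait, $201$ means $e_i\,{>}\,e_j$, hmm; precisely $\I_n(>,\neq,>)$ forbids $i<j<k$ with $e_i>e_j$, $e_j\neq e_k$, $e_i>e_k$, i.e.\ forbids $e_i>e_j$ and $e_i>e_k$ and $e_j\neq e_k$). So the forbidden configuration is: indices $i<j<k$ with $e_i>\max(e_j,e_k)$ and $e_j\neq e_k$. Using Lemma~\ref{lem:dec}, the value $e_i$ is the label $\ell_v$ of the interval $I_v$ containing $\pi_i$ in the slice $U_{i-1}(\pi)$; by part~(ii) this label reappears as some later entry, and by part~(i) the labels in a slice are increasing down the (decreasing) intervals. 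The key translation step is: $e_i>e_j$ for $i<j$ happens exactly when, at stage $i$, $\pi_i$ lies in an interval whose label has already been ``seen'' more times than the interval containing $\pi_j$ at its stage — concretely, when there exists an index $a<i$ with $\pi_a$ lying in a higher interval than $\pi_i$'s and $\pi_a<\pi_i$, since each time the topmost surviving interval is touched its label increments. I would make this precise and extract: $e_i>e_j$ $\iff$ there is $a<i$ with $\pi_a<\pi_i$ and $\pi_a,\pi_i$ in positions forcing $\pi_a$ below some later witness — the upshot being a $3$-letter pattern $\pi_a\pi_i\pi_j$ of shape $2\,\text{(something)}\,1$ together with a ``witness'' $\pi_b$ ($b<i$) of larger value. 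Combining a witness for $e_i>e_j$ with one for $e_i>e_k$, and using $e_j\neq e_k$ to separate the two right-hand letters into distinct relative order, assembles the $5$-letter pattern with the $2/4$ and $3/5$ wildcards.

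The two directions are then: (forward) given the forbidden triple $i<j<k$ in $e$, produce the required five indices in $\pi$ by taking $i,j,k$ together with the two witness indices supplied by the analysis above, checking that the witnesses can be chosen distinct and correctly ordered (the smaller-valued of the two right letters corresponding to whichever of $e_j,e_k$ is smaller); (backward) given an occurrence of one of the four patterns in $\pi$, show the three ``value'' positions — playing the roles of the $2$-or-$4$, the $1$, and one of the $3$-or-$5$ — have $b$-code entries realizing a $201$- or $210$-pattern, by tracing how the interval containing the ``$1$'' gets its label bumped by the earlier larger letters. I expect the forward direction, and specifically the bookkeeping that guarantees one can pick the two witnesses distinct and in the right relative order so that all five indices are genuinely distinct and correctly patterned, to be the main obstacle; it will require a careful case split on which of $e_j,e_k$ is larger and on whether a single earlier large letter can serve as witness for both inequalities. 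The backward direction should be more routine, essentially a direct computation with the recursive slice rules applied to the five marked positions, analogous to the argument in the proof of Proposition~\ref{pro:fine}. Throughout, Lemma~\ref{lem:dec} is the workhorse, converting all statements about $b$-code entries into statements about relative positions and values in $\pi$.
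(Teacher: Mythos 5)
Your framing is right—reduce everything to the equivalence ``$e=b(\pi)$ contains a $201$/$210$-pattern iff $\pi$ contains one of $24135,24153,42135,42153$'' and quote the known fact that $b$ sends $\Des$ to $\Asc$—but the core of the argument is missing, and the substitute you sketch does not work as stated. Your ``key translation step'' tries to characterize a single inequality $e_i>e_j$ by the existence of an earlier witness $\pi_a<\pi_i$ ``lying in a higher interval than $\pi_i$'s''; this is internally inconsistent (by Lemma~\ref{lem:dec}(i) a higher interval consists of \emph{larger} values, so no $\pi_a<\pi_i$ can sit above $\pi_i$'s interval), and more fundamentally a pairwise characterization of when $e_i>e_j$ is exactly the delicate bookkeeping you yourself flag as the unresolved ``main obstacle.'' So the proposal is a plan whose decisive step is absent.

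The paper's proof avoids pairwise analysis entirely by working with one slice, taken at the position of the letter playing the role of $1$ (equivalently, of the large code entry). Given a pattern occurrence $\pi_i\pi_j\pi_k\pi_l\pi_m$ with $\pi_k$ the smallest, in $U_{k-1}(\pi)$ the letters $\pi_k,\pi_l,\pi_m$ lie in three distinct intervals and the interval of $\pi_k$ carries the largest label by Lemma~\ref{lem:dec}(i); Lemma~\ref{lem:dec}(ii) then forces the two smaller labels to reappear after position $k$, giving a $201$ or $210$ in $e$. Conversely, given $k<l<m$ with $e_k>e_l$, $e_k>e_m$, $e_l\neq e_m$, the crucial observation is that if at most one interval lay above the interval containing $\pi_k$ in $U_{k-1}(\pi)$, then at most \emph{one distinct value} smaller than $e_k$ could ever occur among later entries (lower intervals only inherit larger labels), contradicting $e_l\neq e_m$; hence there are two higher intervals $I_b,I_c$, and one reads off two earlier ``separator'' letters strictly between consecutive intervals and two later letters inside $I_b$ and $I_c$, which together with $\pi_k$ form one of the four patterns with $\pi_k$ as the $1$. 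This monotonicity-of-labels argument, and the choice of the slice $U_{k-1}(\pi)$, are what replace your witness bookkeeping; without them (or a genuinely proved alternative translation), your forward direction does not go through.
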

\begin{proof}
Let $\pi\in\SS_n$ and $e=b(\pi)\in\I_n$.
If $\pi$ contains an occurrence $\pi_i\pi_j\pi_k\pi_l\pi_m$ of one of the patterns in $\{24135,24153,42135,42153\}$, then in the $(k-1)$-th slice $U_{k-1}(\pi)$, the three letters $\pi_k$, $\pi_l$ and $\pi_m$ belong to three different intervals, among which the interval containing $\pi_k$ receives  the greatest labeling according to Lemma~\ref{lem:dec}~(i). Thus by Lemma~\ref{lem:dec}~(ii), the suffix $e_ke_{k+1}\cdots e_n$  contains a pattern in $\{201,210\}$.

Conversely, suppose $e$ contains an occurrence $e_ke_le_m$ of one of the patterns in $\{201,210\}$. In  the $(k-1)$-th slice $U_{k-1}(\pi)$, there must exist three different intervals $I_a$, $I_b$ and $I_c$ in increasing order such that $I_a$ contains $\pi_k$. For otherwise, there has at most one interval upper than the internal containing $\pi_k$ and by the construction of $b(\pi)$, there will has at most one labeling smaller than $e_k$ that appears in the $a$-th slice $U_a(\pi)$ for any $a\geq k$, contradicting the fact that the suffix  $e_{k}e_{k+1}\cdots e_n$ contains a pattern in $\{201,210\}$. Now the occurrence of the intervals $I_a$, $I_b$ and $I_c$ implies that there exists
\begin{itemize}
\item a letter $\pi_i$ for some $i<k$ such that $\max(I_a)<\pi_i<\min(I_b)$;
\item a letter $\pi_j$ for some $j<k$ such that $\max(I_b)<\pi_i<\min(I_c)$;
\item a letter $\pi_{l'}$ for some $l'>k$ such $\pi_{l'}\in I_b$;
\item and a letter $\pi_{m'}$ for some $m'>k$ such that $\pi_{m'}\in I_c$.
\end{itemize}
The five letters $\pi_i$, $\pi_j$, $\pi_{k}$, $\pi_{l'}$ and $\pi_{m'}$ forms a pattern in $\{24135,24153,42135,42153\}$ such that $\pi_k$ plays the role of $1$.
\end{proof}

\begin{lemma}\label{inv:24135}
The set $\SS_n(24135,24153,42135,42153)$ is invariant under the MFS-action.
\end{lemma}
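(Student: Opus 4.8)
The plan is to follow the template of Lemma~\ref{inv:2134}. Since $\varphi'_a$ is an involution on $\SS_n$ for every $a\in[n]$, it suffices to show that if $\pi\notin\SS_n(24135,24153,42135,42153)$ then $\varphi'_a(\pi)\notin\SS_n(24135,24153,42135,42153)$; so I would fix an occurrence of one of the four patterns in $\pi$ and track what $\varphi'_a$ does to it.

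First I would isolate the local effect of the MFS-action. Writing $\pi=w_1w_2aw_3w_4$ as in the definition, $\varphi'_a(\pi)=\pi$ unless $a$ sits at a double ascent or a double descent of $\pi$; in the double-ascent case $w_2$ is empty and $\varphi_a$ slides $a$ rightward past the nonempty block $w_3$, while in the double-descent case $w_3$ is empty and $\varphi_a$ slides $a$ leftward past the nonempty block $w_2$. In either case $\varphi'_a$ transports $a$ across one contiguous block $B$ of entries all larger than $a$, and the only pairs of entries of $\pi$ whose relative left-to-right order changes are the pairs $\{a,x\}$ with $x\in B$. Next, observe that the four patterns share the feature that the smallest of the five letters occupies the third slot, the letters in slots $1,2$ realize the second- and fourth-smallest values (in some order), and the letters in slots $4,5$ realize the third- and fifth-smallest values (in some order); equivalently, $\pi$ contains one of them iff there are positions $i_1<i_2<c<j_1<j_2$ with
$$
\pi_c<\min(\pi_{i_1},\pi_{i_2})<\min(\pi_{j_1},\pi_{j_2})<\max(\pi_{i_1},\pi_{i_2})<\max(\pi_{j_1},\pi_{j_2}).
$$
Call such a quintuple a \emph{witness}; the crucial structural fact is that the family of four patterns is closed both under swapping slots $1,2$ and under swapping slots $4,5$.

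Now I would fix a witness and an $a\in[n]$ and assume $\varphi'_a$ is not the identity. If $a<\pi_c$ then $a$ is none of the five witness entries, so no witness pair is of the form $\{a,x\}$ and the same quintuple of values, in its new positional order, is a witness for $\varphi'_a(\pi)$. If $a=\pi_c$ then $\pi_c$ lies at a double ascent or double descent, so $\pi_{c-1}<\pi_c$ or $\pi_{c+1}<\pi_c$; using $\pi_{i_2}>\pi_c$ and $\pi_{j_1}>\pi_c$ one checks that $i_2<c-1$ in the first case and $c+1<j_1$ in the second, whence $\pi_{i_1},\pi_{i_2},\pi_{c\mp1},\pi_{j_1},\pi_{j_2}$ is again a witness in $\pi$ that does not use the entry $a$, and it survives unchanged in $\varphi'_a(\pi)$. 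Finally, if $a>\pi_c$ then $\pi_c\notin B$ and $a$ does not occupy position $c$, so the interval of positions occupied by $a$ together with $B$ avoids position $c$ and hence lies entirely to the left, or entirely to the right, of $c$. In the left case the only witness pair $\varphi'_a$ can reorder is $\{\pi_{i_1},\pi_{i_2}\}$ --- neither $\pi_c$ nor $\pi_{j_1},\pi_{j_2}$ can lie in $B$ --- so $\varphi'_a(\pi)$ still carries a witness, at worst with slots $1,2$ transposed, which is harmless by the closure property; the right case is symmetric with $\{\pi_{j_1},\pi_{j_2}\}$. In all cases $\varphi'_a(\pi)$ contains one of the four patterns, and invariance follows.

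I expect the main obstacle to be the bookkeeping in the last case: one must argue cleanly that the contiguous block $B$ transported by $\varphi'_a$, being pinned to one side of the minimal entry $\pi_c$, can never straddle two witness entries lying on opposite sides of $\pi_c$, so that the only disorder it can create is a transposition within one of the two ``free'' slot-pairs. The reselection of the middle entry in the case $a=\pi_c$ is the other point requiring a careful, though routine, verification.
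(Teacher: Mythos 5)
Your proposal is correct, and it is in fact more careful than the paper's own argument. The paper proceeds by the same template you follow (fix an occurrence, use that $\varphi_a'$ is an involution, and track the occurrence through the action), but it simply asserts that the \emph{same} five letters still form a pattern in $\varphi_a'(\pi)$, on the grounds that the letters in slots $1,2$ (resp.\ $4,5$) stay to the left (resp.\ right) of the minimal letter. That assertion is not literally true: if $a$ equals the minimal entry of the occurrence and sits at a double ascent or double descent, it can jump over one of the other occurrence letters --- e.g.\ for $\pi=351246$ with the occurrence $3,5,2,4,6$ of $24135$ and $a=2$, one gets $\varphi_2'(\pi)=351462$, where those five letters form $24351$; an occurrence survives only after re-choosing the minimal letter (here $1$). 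Your case $a=\pi_c$, where you replace $\pi_c$ by its smaller neighbour $\pi_{c\mp1}$ and check $i_2<c-1$ (resp.\ $c+1<j_1$), is exactly the repair this needs, and your observation that the moved block, being contiguous and avoiding position $c$, can at worst transpose the pair in slots $1,2$ or the pair in slots $4,5$ --- harmless because the family $\{24135,24153,42135,42153\}$ is closed under both transpositions --- completes the remaining case cleanly. So you have the same overall strategy as the paper, plus the case analysis that makes it airtight; the only cost is length, and the only thing I would add is a sentence recording explicitly that in the non-identity case exactly the pairs $\{a,x\}$ with $x$ in the jumped block change relative order, which you do state and which is the engine of the whole argument.
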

\begin{proof}
Suppose that $\pi\notin\SS_n(24135,24153,42135,42153)$, then $\pi$ contains subsequence $\pi_i\pi_j\pi_k\pi_l\pi_m$ that is order isomorphism to one pattern in  $\{24135,24153,42135,42153\}$. By the definition of $\varphi_a'(\pi)$, the letters $\pi_i$ and $\pi_j$ (resp.~$\pi_k$ and $\pi_l$) in $\varphi_a'(\pi)$ are still appear to the left (resp.~right) of $\pi_k$. Thus, the four letters $\pi_i$, $\pi_j$, $\pi_k$, $\pi_l$ and $\pi_m$ in $\varphi_a'(\pi)$ still form a pattern in $\{24135,24153,42135,42153\}$ and so $\varphi_a'(\pi)\notin\SS_n(24135,24153,42135,42153)$. The result then follows from the fact that $\varphi_a'$ is an involution on $\SS_n$ for any $a\in[n]$.
\end{proof}

\begin{proof}[{\bf Proof of Theorem~\ref{thm:gamcl}}]
By Lemmas ~\ref{act:gam} and  ~\ref{inv:24135}, we have
$$
\sum_{\pi\in\SS_n(24135,24153,42135,42153)}t^{\des(\pi)}=\sum_{k=0}^{\lfloor(n-1)/2\rfloor}|\widetilde{\SS_{n,k}}(24135,24153,42135,42153)|t^k(1+t)^{n-1-2k}.
$$
The $\gamma$-positivity expansion~\eqref{gam:cl} then follows by applying  identity~\eqref{24135}.
\end{proof}

%%%%%%%%%%%%%%%%%%%%
\section*{Acknowledgement}%%%%%%
%%%%%%%%%%%%%%%%%%%
The first author  was supported by
the National Science Foundation of China grant 11701420.
The second author
was supported
by the National Science Foundation of China grant 11871247 and the project of Qilu Young Scholars of Shandong University.

\end{document}